\numberwithin{equation}{section}
\theoremstyle{plain}
\newtheorem{theorem}{Theorem}[section]
\newtheorem{lemma}[theorem]{Lemma}
\newtheorem*{de-lemma}{Lemma}
\theoremstyle{remark}
\theoremstyle{definition}
\newcommand{\dd}{\mathrm{d}}
\newcommand{\R}{\mathbb{R}}
\newcommand{\ve}{\epsilon}
\begin{document}

\title{Gradient theory of domain walls in thin, nematic liquid crystals films}

\author{Marcel G. Clerc}
\address{Departamento de Física and Millennium Institute for Research in Optics, 
Facultad de Ciencias Físicas y Matemáticas, Universidad de Chile, Casilla, 487-3, Santiago, Chile.}
\email{marcel@dfi.uchile.cl}
\thanks{M. G. Clerc thanks for the financial support of FONDECYT project 1180903 and Millennium Institute for Research in Optics.}

\author{Micha{\l } Kowalczyk}
\address{Departamento de Ingenier\'{\i}a Matem\'atica and Centro
de Modelamiento Matem\'atico (UMI 2807 CNRS), Universidad de Chile, Casilla
170 Correo 3, Santiago, Chile.}
\email {kowalczy@dim.uchile.cl}
\thanks{M. Kowalczyk was partially supported by Chilean research grants Fondecyt 1130126 and 1170164, and Fondo Basal AFB170001 CMM-Chile.}

\author{Panayotis Smyrnelis}
\address{Centro
de Modelamiento Matem\'atico (UMI 2807 CNRS), Universidad de Chile, Casilla
170 Correo 3, Santiago, Chile.}
\email{psmyrnelis@dim.uchile.cl}
\thanks{P. Smyrnelis was partially supported by and Fondo Basal AFB170001 CMM-Chile and Fondecyt postdoctoral grant 3160055}


\begin{abstract}
In this paper we describe domain walls appearing in a thin, nematic  liquid crystal sample subject to an external field with intensity close to the  Fr\'eedericksz transition threshold. Using the gradient theory of the phase transition adopted to this situation, we show that depending on the parameters of the system, domain walls occur in the bistable region or at the border between the bistable and the monostable region. 
\end{abstract}

\maketitle

\section{Introduction}

The macroscopic systems maintained out of equilibrium by means of the injection and dissipation of energy 
are characterized by exhibiting coexistence of different equilibria \cite{GlansdorffPrigogine,NicolisPrigogine,Pismen2006}.
This is the physical context, where the life develops. Inhomogeneous initial conditions 
caused by, e.g., inherent fluctuations of macroscopic systems, generate the emergence 
of equilibria in different parts of space, which are usually identified as spatial domains. 
These domains are separated by domain walls or interface between the equilibria.
A classic example of this phenomena is the magnetic domains and walls \cite{Hubert}.
Depending on the configuration of the magnetization these walls are usually denominated as Ising, Bloch, and Neel.
Likewise, similar walls have been observed in liquid crystals, when a liquid crystal film is subjected 
to magnetic or electric fields \cite{OswaldPieranski}. 
In particular, nematic liquid crystals with planar anchoring exhibit Ising walls \cite{Chevallard}.
Close to the reorientation instability of the molecules, Fr\'eedericksz transition, this system is well described by the Allen-Cahn equation. 
Besides, using a photosensitive wall,  it is possible to induce a molecular reorientation in  a thin liquid crystal film \cite{ResidoriRiera2001}.
This type of device is usually called as liquid crystal light valve (see \cite{Residori2005} and references therein).
Due to the inhomogeneous illumination generated by light on the liquid crystal layer, the dynamics of molecular reorientation is described by
\begin{equation}
\partial_t  u(x_1,x_2,t)=\epsilon^2 \Delta u+\mu(x_1,x_2)u -u^3+ a \epsilon x_1 f(x_1,x_2),
\label{Eq-LCLV}
\end{equation}
where $u(x_1,x_2,t)$ accounts for the average rotational amplitude of the molecules, $t$, $x_1$, and $x_2$, 
respectively, stand for time and the transverse coordinates of the liquid crystal layer,
${x_1}$ is the direction in which the molecules are anchored, $f(x_1,x_2)=-\frac{1}{2}\partial_{x_1} \mu(x_1,x_2)$, and non dimensional parameters $\epsilon, a$ are positive. The function 
\begin{equation}
\label{def mu1}
\mu(x_1,x_2)=\mu_0+I_o e^{-\frac{x_1^2+x_2^2}{w^2}}, 
\end{equation}
which accounts the forcing given by the external electric field and the 
effect of the illuminated photo-sensitive wall characterized by the light intensity $I_o>0$, is typically sign changing i.e $-I_0<\mu_0<0$. This last  condition describes  the situation when  the electrical voltage applied to the liquid crystal sample is less than the Fr\'eedericksz 
voltage. The level set  $\{\mu(x_1, x_2)=0\}$ separates two disjoint regions where $\mu$ is of constant sign. For any   $x\in \{\mu>0\}$ the potential 
\[
U(z,x)=-\mu(x)\frac{z^2}{2}+\frac{z^4}{4}
\] 
has precisely two non degenerate minima of equal depth at  $z=\pm \sqrt{\mu(x)}$, while in the region $\{\mu<0\}$, $U$ is nonnegative and its only minimum occurs at $z=0$.  Motivated by this we will call the set $\{\mu<0\}\subset \R^2$ the bistable region and the set $\{\mu>0\}\subset \R^2$ the monostable region. Note that with the choice of the function $\mu$  in (\ref{def mu1}) the bistable region is a disc and the monostable region is its complement in $\R^2$. The objective of this paper is to understand how the location of the domain walls defined  as the set of  zeros of the solutions of (\ref{Eq-LCLV}) change when the parameters $\epsilon$ and $\alpha$ vary. For this purpose we will restrict our attention to the time independent solutions, the idea being that the system quickly relaxes to its stationary state.  

If one ignores the dependence on the transversal $x_2$ coordinate, the system exhibits two type of walls that 
separate domains that evanesce asymptotically \cite{Barboza2016,panayotis_1}.  One corresponds to the extension 
of Ising wall, standard kink, in this inhomogeneous system, which is a symmetric solution and centered 
in the region of the maximal  illumination i.e.  $x=0$ (since $\mu(x)$ attains its maximum in the origin). The other corresponds to a wall centered in the non-illuminated part,
shadow kink  \cite{Barboza2016,panayotis_1}. To understand the latter one can expand the solution around the point where $\mu(x)=0$. In this limit the  profile of the transition is described by  the second Painlev\'e equation \cite{panayotis_1,troy,2005math.ph...8062C}. This paper is devoted to understanding the physically relevant situation when the dependence on the second coordinate is not neglected and we take $t\to \infty$. In this limit the stationary solutions of (\ref{Eq-LCLV}) can be  characterized as the minima of the following  energy functional
\begin{equation}
\label{funct 0}
E(u)=\int_{\R^2}\frac{\epsilon}{2}|\nabla u|^2-\frac{1}{2\epsilon}\mu(x)u^2+\frac{1}{4\epsilon}u^4-a f_1(x) u,
\end{equation}
where $u\in  H^1(\R^2)$ and $\epsilon>0$, $a\geq 0$ are real parameters. 
More generally as in (\ref{def mu1}) we suppose that $\mu \in C^\infty(\R^2)$ is radial i.e. $\mu(x)=\mu_{\mathrm{rad}}(|x|)$, with $\mu_{\mathrm{rad}} \in C^\infty(\R)$ an even function. 
We take  $f=(f_1,f_2)\in C^\infty(\R^2,\R^2)$  also to be  radial i.e. $f(x)= f_{\mathrm{rad}}(|x|)\frac{x}{|x|}$, with $f_{\mathrm{rad}} \in C^\infty(\R)$ an odd function.
In addition we assume that 
\begin{equation}\label{hyp2}
 \begin{cases}
   \mu \in L^\infty(\R^2),\ \mu_{\mathrm{rad}}'<0 \text{ in }(0,\infty), \text{ and $ \mu_{\mathrm{rad}}(\rho)=0$ for a unique $\rho>0$},\medskip \\
  \text{$f \in L^1(\R^2,\R^2)\cap L^\infty(\R^2,\R^2)$, and $ f_{\mathrm{rad}}>0$ on $(0,\infty)$}.
 \end{cases}
\end{equation}
%
%

The Euler-Lagrange equation of $E$ is
\begin{equation}\label{ode}
\ve^2 \Delta u+\mu(x) u-u^3+\ve a f_1(x)=0,\qquad x=(x_1,x_2)\in \R^2,
\end{equation} 
and we also write its weak formulation:
\begin{equation}\label{euler}
\int_{\R^2} -\epsilon^2 \nabla u\cdot \nabla \psi+\mu u \psi-u^3 \psi+\epsilon a f_1\psi=0,\qquad  \forall \psi \in H^1(\R^2),
\end{equation}
where $\cdot$ denotes the inner product in $\R^2$.
Note that due to the radial symmetry of $\mu$ and $f$, the energy \eqref{funct 0} and equation 
\eqref{ode} are invariant under the transformations $u(x_1,x_2)\mapsto -u(-x_1,x_2)$, and $u(x_1,x_2)\mapsto u(x_1,-x_2)$.

Our purpose in this paper is to study qualitative properties of the global minimizers of $E$ as the parameters $a$ and $\epsilon$ vary. In general we will assume that $\epsilon>0$ is small and $a\geq 0$ is fixed. In our previous work \cite{panayotis_1} and \cite{panayotis_2}, we examined respectively the cases of minimizers $v:\R\to\R$, and $v:\R^2\to\R^2$. In the present paper we follow the approach presented therein, and introduce several new ideas to address the specific issues occuring for minimizers $v:\R^2\to\R$. In particular, new variational arguments to determine the limit points of the zero level set of $v$, which is now a curve (cf. the conclusion of the proofs of Theorem \ref{thcv} (ii) and (iii)), and a computation of the energy that reduces to a one dimensional problem, by using iterated integrals.   

Proceeding as in \cite{panayotis_2}, one can see that under the above assumptions there exists a global minimizer $v$ of $E$ in $H^1(\R^2)$, namely 
that $E(v)=\min_{H^1(\R^2)} E$. 
In addition, we show that $v$ is a classical solution of \eqref{ode}, 
and $v$ is even with respect to $x_2$ i.e. $v(x_1,x_2)=v(x_1,-x_2)$. 
In the sequel, we will always denote by $v$ the global minimizer, and by $u$ an arbitrary critical point of $E$ in $H^1(\R)$.
Some basic properties are stated in: 
\begin{theorem}\label{theorem 1}
For $\epsilon\ll 1$, and $a\geq 0$ bounded (possibly dependent on $\epsilon$), 
let $v_{\epsilon, a}$ be a global minimizer of $E$, let $\rho>0$ be the zero of $\mu_{\mathrm{rad}}$ and let $\mu_1:=\mu_{\mathrm{rad}}'(\rho)<0$. The following  statements hold:
\begin{itemize}
\item[(i)] Let $\Omega\subset D(0;\rho)$ be an open set such that $v_{\epsilon, a}> 0$ (resp. $v_{\epsilon, a}< 0$) on $\Omega$, for every $\epsilon\ll 1$.
Then $v_{\epsilon, a}\to \sqrt{\mu}$ (resp. $v_{\epsilon, a}\to -\sqrt{\mu}$) in $C^0_{\mathrm{loc}}(\Omega)$. 
\item[(ii)] For every $\xi=\rho e^{i\theta}$, we consider the local coordinates $s=(s_1,s_2)$ in the basis $(e^{i\theta},i e^{i\theta})$, and the 
rescaled minimizers:
\[
w_{\epsilon,a}(s)= 2^{-1/2}(-\mu_1\ve)^{-1/3} v_{\epsilon,a}\Big( \xi+\ve^{2/3} \frac{s}{(-\mu_1)^{1/3}}\Big).
\]
Assuming that $\lim_{\epsilon\to 0}a(\epsilon)=a_0$, then as $\ve\to 0$, the function $w_{\epsilon, a}$ converges in $C^2_{\mathrm{loc}}(\R^2)$ up to subsequence, 
to a function $y$ bounded in $[s_0,\infty)\times \R$ for every $s_0\in\R$, which is a minimal  
solution of
\begin{equation}\label{pain}
\Delta y(s)-s_1 y(s)-2y^3(s)-\alpha=0, \qquad \forall s=(s_1,s_2)\in \R^2,
\end{equation}
with $\alpha=\frac{a_0 f_1(\xi)}{\sqrt{2}\mu_1}$. 
\item[(iii)] Assuming that $\lim_{\epsilon\to 0}a(\epsilon)= a_0$, then we have $\lim_{\epsilon\to 0}\frac{u_{\epsilon,a}(x)}{\epsilon}=
-\frac{a_0}{\mu(x)}f_1(x)$ uniformly on compact subsets of $\{|x|>\rho\}$.
\end{itemize}
\end{theorem}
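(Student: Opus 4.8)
The plan is to establish the three assertions separately; all of them rest on two uniform a priori facts. A maximum-principle argument at the extrema of an $H^1$ critical point $u_{\epsilon,a}$ (at a positive maximum $x_0$ one has $\Delta u_{\epsilon,a}(x_0)\le 0$, hence $u_{\epsilon,a}(x_0)^3\le\|\mu\|_\infty u_{\epsilon,a}(x_0)+C\epsilon$) gives the uniform bound $\|u_{\epsilon,a}\|_{L^\infty(\R^2)}\le C$, with $C$ independent of $\epsilon$ and of $a$ in a bounded set. For the global minimizer one has moreover the sharp energy estimate $E(v_{\epsilon,a})=-\frac1{4\epsilon}\int_{\R^2}\mu_+^2+O(1)$, with $\mu_+:=\max(\mu,0)$: the lower bound is immediate from $U(z,x)\ge-\frac14\mu_+(x)^2$ together with $|\int af_1v_{\epsilon,a}|\le a\|f_1\|_{L^1}\|v_{\epsilon,a}\|_{L^\infty}\le C$ (using $f\in L^1$, see \eqref{hyp2}), and the upper bound follows by testing $E$ against a truncation of $\sqrt\mu\,\mathbf{1}_{D(0;\rho)}$ that is smoothed to $0$ on an $\epsilon^\beta$-neighbourhood of $\{\mu=0\}$ with $\beta\in(\frac13,1)$. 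Since $U(z,x)+\frac14\mu_+(x)^2\ge\frac14(z^2-\mu_+(x))^2\ge 0$, subtracting the two bounds yields the excess-energy estimate $\frac1\epsilon\int_{\R^2}(v_{\epsilon,a}^2-\mu_+)^2\le C$, hence $v_{\epsilon,a}^2\to\mu_+$ in $L^2(\R^2)$ and, along subsequences, a.e.

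For (i), on $\Omega\subset D(0;\rho)$ with $v_{\epsilon,a}>0$ the $L^2$ convergence above gives $v_{\epsilon,a}\to\sqrt\mu$ a.e.\ and in $L^2_{\mathrm{loc}}(\Omega)$, which I would upgrade to $C^0_{\mathrm{loc}}$ by a contradiction-and-blow-up argument. If $|v_{\epsilon_n,a}(x_n)-\sqrt{\mu(x_n)}|\ge\delta$ with $x_n\to x_*\in\Omega$, set $\tilde v_n(y):=v_{\epsilon_n,a}(x_n+\epsilon_n y)$; the uniform $L^\infty$ bound and interior Schauder estimates give $\tilde v_n\to\tilde v_\infty$ in $C^2_{\mathrm{loc}}(\R^2)$ along a subsequence, where $\tilde v_\infty\ge 0$ solves $\Delta w+\mu(x_*)w-w^3=0$ and, since $v_{\epsilon_n,a}$ minimizes $E$, $\tilde v_\infty$ minimizes the limiting Allen--Cahn energy $w\mapsto\int(\frac12|\nabla w|^2+\frac14(w^2-\mu(x_*))^2)$ among compactly supported perturbations (minimality passes to the limit by the standard gluing argument). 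A nonnegative minimizer of this energy in the plane is constant — a nonconstant one would be, up to rotation, the one-dimensional heteroclinic, which changes sign — so $\tilde v_\infty\in\{0,\sqrt{\mu(x_*)}\}$. The value $0$ is excluded by minimality: if $\tilde v_\infty\equiv 0$ then $v_{\epsilon_n,a}=o(1)$ on $B_{2R\epsilon_n}(x_n)$, and replacing it there by the constant $\sqrt{\mu(x_*)}$ on $B_{R\epsilon_n}(x_n)$ with a linear transition on the annulus decreases the potential energy by $\sim R^2\epsilon_n\,\mu(x_*)^2$ while raising the gradient energy by only $O(\epsilon_n\,\mu(x_*))$, a quantity independent of $R$, contradicting global minimality once $R$ is large and $n$ large. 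Hence $\tilde v_\infty\equiv\sqrt{\mu(x_*)}$, so $v_{\epsilon_n,a}(x_n)\to\sqrt{\mu(x_*)}$, contrary to the choice of $x_n$. The case $v_{\epsilon,a}<0$ is identical (or follows from the invariance $u(x_1,x_2)\mapsto-u(-x_1,x_2)$).

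For (ii), substituting $v_{\epsilon,a}(x)=\sqrt 2\,(-\mu_1\epsilon)^{1/3}w_{\epsilon,a}(s)$ with $s=(-\mu_1)^{1/3}\epsilon^{-2/3}(x-\xi)$ read in the frame $(e^{i\theta},ie^{i\theta})$, and using $\mu(x)=-(-\mu_1)^{2/3}\epsilon^{2/3}s_1+O(\epsilon^{4/3})$ and $f_1(x)=f_1(\xi)+O(\epsilon^{2/3})$, equation \eqref{ode} divided by $\sqrt 2\,(-\mu_1)\epsilon$ becomes $\Delta_s w_{\epsilon,a}-s_1w_{\epsilon,a}-2w_{\epsilon,a}^3-\frac{a(\epsilon)f_1(\xi)}{\sqrt 2\,\mu_1}=O(\epsilon^{2/3})$ on compact sets. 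The crucial ingredient is the uniform bound: for every $s_0$ there is $C_{s_0}$ with $|w_{\epsilon,a}|\le C_{s_0}$ on $[s_0,\infty)\times\R$ near $\xi$, equivalently $|v_{\epsilon,a}(x)|\le C_{s_0}(-\mu_1\epsilon)^{1/3}$ on the corresponding region; I would prove this, as in the one-dimensional study \cite{panayotis_1} (cf.\ also \cite{troy}), by comparison with a supersolution built from a scaled and translated Painlev\'e~II (Hastings--McLeod type) transcendent in the $s_1$ variable, matched with the $L^\infty$ bound in the far monostable zone. Granting it, interior Schauder estimates give uniform $C^{2,\gamma}_{\mathrm{loc}}$ bounds, so along a subsequence $w_{\epsilon_n,a}\to y$ in $C^2_{\mathrm{loc}}(\R^2)$; passing to the limit (the error terms vanish because $a(\epsilon)\to a_0$) shows $y$ solves \eqref{pain} with $\alpha=\frac{a_0f_1(\xi)}{\sqrt 2\,\mu_1}$ and is bounded on each $[s_0,\infty)\times\R$, while minimality of $w_{\epsilon,a}$ for the rescaled energy passes to the limit as in (i), so $y$ is a minimal solution. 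The construction of this Painlev\'e~II supersolution, which has to match the bulk behaviour on one side and the far-field bound on the other, is the main technical obstacle of the whole statement.

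For (iii), $\|u_{\epsilon,a}\|_{L^\infty}\le C$ as above, and a blow-up at a point $x_*\in\{\mu<0\}$ produces, exactly as in (i), a bounded solution on $\R^2$ of $\Delta w=w(w^2+|\mu(x_*)|)$; such a solution vanishes identically (multiply by $w$, integrate over $B_R$, and run the Gronwall-type inequality $F'(R)\ge F(R)^2/(2\pi\|w\|_\infty^2R)$ for $F(R):=\int_{B_R}|\nabla w|^2$, which forces $F\equiv 0$), so $u_{\epsilon,a}\to 0$ in $C^0_{\mathrm{loc}}(\{\mu<0\})$. Given a compact $K\subset\{|x|>\rho\}$, pick nested compacts $K\subset\subset K_1\subset\subset K_2\subset\subset\{\mu<0\}$. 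On $K_2$ equation \eqref{ode} reads $\epsilon^2\Delta u_{\epsilon,a}-V_\epsilon u_{\epsilon,a}=-\epsilon af_1$ with $V_\epsilon:=-\mu+u_{\epsilon,a}^2\ge c/2>0$ for $\epsilon$ small; comparing $u_{\epsilon,a}$ with the supersolution $M\epsilon+\eta_\epsilon h_\epsilon$, where $\eta_\epsilon:=\sup_{K_2}|u_{\epsilon,a}|\to 0$, $M\ge 2\|af_1\|_\infty/c$, and $h_\epsilon$ solves $\epsilon^2\Delta h_\epsilon-\frac c2h_\epsilon=0$ in $K_2$ with $h_\epsilon=1$ on $\partial K_2$ (so $h_\epsilon\le Ce^{-\beta\,\mathrm{dist}(\cdot,\partial K_2)/\epsilon}$), gives $\|u_{\epsilon,a}\|_{L^\infty(K_1)}=O(\epsilon)$. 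Then $w_\epsilon:=u_{\epsilon,a}+\epsilon\,a(\epsilon)f_1/\mu$ solves $\epsilon^2\Delta w_\epsilon-V_\epsilon w_\epsilon=O(\epsilon^3)$ on $K_1$ with $|w_\epsilon|=O(\epsilon)$ on $\partial K_1$, so the same comparison gives $\|w_\epsilon\|_{L^\infty(K)}=O(\epsilon^3)$; hence $u_{\epsilon,a}/\epsilon=-a(\epsilon)f_1/\mu+O(\epsilon^2)$ uniformly on $K$, and $a(\epsilon)\to a_0$ yields the claim. These barrier arguments, together with the Liouville step, are comparatively routine once the bulk convergence is in hand.
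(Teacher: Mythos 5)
Parts (i) and (iii) of your proposal follow essentially the same strategy as the paper: blow-up at scale $\epsilon$, classification of the bounded minimal entire limits (constants vs.\ the one-dimensional heteroclinic for (i); a Liouville argument for the strictly convex potential in the region $\{\mu<0\}$ for (iii)), plus a contradiction argument for uniformity. Your explicit competitor excluding the zero limit in (i), and your barrier scheme in (iii) (which even yields an $O(\epsilon^2)$ rate for $u_{\epsilon,a}/\epsilon+a f_1/\mu$, stronger than the mere convergence proved in the paper, where Lemma \ref{s3gg} supplies the bound $|u_{\epsilon,a}|\leq C\epsilon$ on $\{|x|\geq\rho_1\}$ via Kato's inequality and an exponential comparison), are correct and acceptable variants.

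The genuine gap is in (ii), and you have located it yourself: the uniform bound for the rescaled minimizers on the half-planes $[s_0,\infty)\times\R$ — equivalently the estimate $|v_{\epsilon,a}(x)|\leq K\big(\sqrt{\mu^+(x)}+\epsilon^{1/3}\big)$ — is \emph{granted}, not proved. Without it you have neither the compactness needed to extract the $C^2_{\mathrm{loc}}$ limit at the $\epsilon^{2/3}$ scale nor the boundedness of $y$ on half-planes, so the whole of (ii) is conditional. Moreover the route you sketch to close it (a two-dimensional supersolution manufactured from a translated Hastings--McLeod transcendent in $s_1$, matched to the far-field bound) is more delicate than necessary: one would have to verify the supersolution inequality for the full inhomogeneous equation uniformly in the angular variable and dominate a solution whose sign is not controlled. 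The paper's Lemma \ref{l2} avoids Painlev\'e transcendents entirely: it applies Kato's inequality to $\psi=\tfrac{u^2}{2}-\chi-\kappa^2\epsilon^{2/3}$, where $\chi$ is the explicit radial barrier \eqref{compchi} (linear in $\rho-|x|$ inside the disc, quadratic over a collar of width $\epsilon^{2/3}$, zero outside), checks that $\Delta\psi\geq 0$ on $\{\psi>0\}$ using only the equation and the choice of the constants $\lambda,\kappa$, and concludes $\psi^+\equiv 0$ from subharmonicity and compact support. Working with $u^2/2$ rather than $u$ is what makes the barrier elementary and sign-insensitive; this is the ingredient your proposal is missing.
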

Looking at the energy $E$ it is evident that as $\epsilon\to 0$ the modulus of the global minimizer $|v_{\epsilon,a}|$ should approach a nonnegative 
root of the polynomial
\[
-\mu(x)u+u^3-a\epsilon f_1(x)=0,
\] 
or in other words, $|v_{\epsilon, a}|\to \sqrt{\mu^+}$ as $\epsilon\to 0$ in some, perhaps weak, sense.  We observe for instance  that as a  corollary 
of Theorem \ref{theorem 1} (i) and Theorem \ref{thcv} (ii) below we obtain when $a<a_*$ the convergence in 
$C^0_{\mathrm{loc}}(D(0; \rho))$ (actually the uniform convergence holds in the whole plane). Because of the analogy between the functional $E$  and the Gross-Pitaevskii functional in theory of Bose-Einstein condensates we will call $\sqrt{\mu^+}$ the Thomas-Fermi limit of the global minimizer. 
Theorem \ref{theorem 1} gives account on how non smoothness of  the limit  of $v_{\epsilon, a}$  is mediated near the circumference $|x|=\rho$, where 
$\mu$ changes sign, through the solution of (\ref{pain}).

This  equation is a natural generalization of the second Painlev\'e ODE 
\begin{equation}
\label{pain 1d}
y''-sy-2y^3-\alpha=0, \qquad s\in \R.
\end{equation}
In \cite{panayotis_1} we showed that this last equation plays an analogous role in the one dimensional, scalar version of the energy $E$: 
\[
E(u,\R)=\int_{\R}\frac{\epsilon}{2}|u_x|^2-\frac{1}{2\epsilon}\mu(x)u^2+\frac{1}{4\epsilon}|u|^4-a f(x)u
\] 
where $\mu$ and $f$ are scalar functions satisfying similar hypothesis to those we have described above.  In this case the Thomas-Fermi limit of the global minimizer is simply $\sqrt{\mu^+(x)}$, which is non differentiable at  the points $x=\pm \xi$ which are the zeros of the even function $\mu$. Near these two points a rescaled version of the global minimizer approaches a solution of (\ref{pain 1d}) similarly as it is described in Theorem  \ref{theorem 1} (ii).

It is very important to realize that not every solution of (\ref{pain 1d}) can  serve as the limit of the global minimizer, since in our case the limiting solutions of (\ref{pain}) are necessarily minimal as well.
To explain what this means, let 
\[
E_{\mathrm{P_{II}}}(u, A)=\int_A \left[ \frac{1}{2} |\nabla u|^2 +\frac{1}{2}  s_1 u^2 +\frac{1}{2} u^4+\alpha u\right].
\]
By definition  a  solution of (\ref{pain}) is minimal if
\begin{equation}\label{minnn}
E_{\mathrm{P_{II}}}(y, \mathrm{supp}\, \phi)\leq E_{\mathrm{P_{II}}}(y+\phi, \mathrm{supp}\, \phi)
\end{equation}
for all $\phi\in C^\infty_0(\R^2)$.  This notion of minimality is standard for many problems in which the energy of a localized solution is actually infinite due to non compactness of the domain.

The study of minimal solutions of \eqref{pain 1d} was recently  initiated in \cite{panayotis_1} where we showed that the Hastings-McLeod solutions $h$ and $-h$, are the only minimal solutions of the homogeneous equation
\begin{equation}\label{phom} 
y''-sy-2y^3=0,  \, s\in \R, 
\end{equation}
which are bounded at $+\infty$.
We recall (cf. \cite{MR555581}) that $h:\R\to\R$ is positive, strictly decreasing ($h' <0$) and such that 
\begin{align}\label{asy0}
h(s)&\sim \mathop{Ai}(s), \qquad s\to \infty, \nonumber \\
h(s)&\sim \sqrt{|s|/2}, \qquad s\to -\infty.
\end{align}

On the other hand in \cite{panayotis_3} we considered when $a=0$, the odd minimizer $u$ of \eqref{funct 0}\footnote{Due to the symmetry of $\mu$ and $f$, $u$ is also a critical point of \eqref{funct 0} (cf. \cite{palais}), thus it solves \eqref{ode}.} in the class
$H^1_{\mathrm{odd}}(\R^2):=\{u \in H^1(\R^2): u(x_1,x_2)=-u(-x_1,x_2)\}$ of odd functions with respect to $x_1$, and following Theorem \ref{theorem 1} (ii), we established the existence of a nontrivial solution $y$ of the homogeneous equation \eqref{pain}. It has a form of a quadruple connection between the Airy function 
$\mathop{Ai}(x)$, the two one dimensional Hastings-McLeod solutions $\pm h(x)$ and the heteroclinic orbit $\eta(x)= \tanh(x/\sqrt{2})$ of the ODE $\eta''=\eta^3-\eta$.
Although we know (cf. \cite[Theorem 2.1]{panayotis_3}) that Theorem \ref{theorem 1} (ii) applied to the global minimizer $v$ in the homogeneous case $a=0$, gives at the limit either $y(s_1,s_2)=h(s_1)$ or  $y(s_1,s_2)=-h(s_1)$, we are
not aware if in the nonhomogeneous case $a\neq 0$, Theorem \ref{theorem 1} (ii) produces a new kind of minimal solution. This goes beyond the scope of the present paper.

Finally, regarding Theorem \ref{theorem 1} (iii) 
we note that since the sign of the local limit of the rescaled global minimizer in $|x|>\rho$ is determined by the sign of $f_1$, one may expect that the zero level set of $v_{\epsilon, a}$ is a smooth curve (cf. Lemma \ref{smooth}) partitioning the plane. In Theorem \ref{thcv} we will determine the limit of this level set according to the value of $a$, and discuss the dependence of the global minimizer on $a$, when $\epsilon\ll 1$.

Before stating our second result we recall that the heteroclinic orbit $\eta(x)= \tanh(x/\sqrt{2})$ ($\eta:\R\to (-1,1)$) of the ODE $\eta''=\eta^3-\eta$, connecting the two minima $\pm 1$ of the potential $W(u)=\frac{1}{4}(1-u^2)^2$ ($W:\R\to [0,\infty)$) plays a crucial role in the study of minimal solutions of the Allen-Cahn equation
\begin{equation}\label{pdeac}
\Delta u=u^3-u, u:\R^n\to \R.
\end{equation}
Again, we say that $u$ is a minimal solution of (\ref{pdeac}) if
\[
E_{\mathrm{AC}}(u, \mathrm{supp}\, \phi)\leq E_{\mathrm{AC}}(u+\phi, \mathrm{supp}\, \phi),
\]
for all $\phi\in C^\infty_0(\R^2)$, where 
\[
E_{\mathrm{AC}}(u,\Omega):=\int_\Omega \frac{1}{2}|\nabla u|^2+\frac{1}{4}(1-u^2)^2
\]
is the Allen-Cahn energy associated to \eqref{pdeac}. It is known \cite{savin} that in dimension $n\leq 7$, any minimal solution $u$ of (\ref{pdeac}) is either trivial i.e. $u\equiv\pm1$ or one dimensional i.e. $u(x)= \eta(  (x-x_0)\cdot \nu)$, for some $x_0\in\R^n$, and some unit vector $\nu\in\R^n$.

\begin{theorem}\label{thcv}
Let $Z=\{l\in \R^2 \text{ is a limit point of the set of zeros of $v_{\epsilon,a}$ as $\epsilon\to 0$}\}$.  
The following statements hold.
\begin{itemize}
\item[(i)]
When $a=0$ the global minimizer $v$ is unique up to change of $v$ by $-v$. It can be written as $v(x)=v_{\mathrm{rad}}(|x|)$, 
with $v_{\mathrm{rad}} \in C^\infty(\R)$, positive and even. 
\item[(ii)]
There exists a constant $a_*>0$ such that
for all $a\in (0, a_*)$, we have  up to change of $v(x_1,x_2)$ by $-v(-x_1,x_2)$:
$$ \{x_1<0, |x|=\rho\} \cup \{x_1=0, |x_2|\geq \rho\}\subset Z \subset \{|x|=\rho\} \cup \{x_1=0, |x_2|\geq \rho\},$$
and
\begin{equation}\label{cvn2}
\lim_{\epsilon\to 0} v(x+s\epsilon)=\sqrt{\mu^+(x)}, \forall x \in \R^2,
\end{equation} 
in  the $C^2_{\mathrm{ loc}}(\R)$ sense. The above asymptotic formula holds as well when $a=0$.
\item[(iii)]
Suppose that $f'_{\mathrm{rad}}(0)\neq 0$, then there exists a constant 
$a^*\geq a_*$ such that for all $a>a^*$ we have $Z=\{x_1=0\}$, and the global minimizer $v$ satisfies
\begin{equation}\label{cvn1}
\begin{aligned}
&\lim_{\ve\to 0} v( x+\ve s)=\begin{cases}\sqrt{\mu^+(x)}  &\text{for } x_1>0, \\
-\sqrt{\mu^+(x)}  &\text{for } x_1<0,
\end{cases}
\end{aligned}
\end{equation}
in the $C^2_{\mathrm{loc}}(\R)$ sense. Next, if $\bar x_{\epsilon,a}=(\bar t_{\epsilon,a},x_2)$ is a zero of $v_{\epsilon,a}$ with fixed ordinate $x_2$, then up to subsequence and  for a.e. $x_2\in(-\rho,\rho)$ we have
\begin{equation}\label{cvn00}
\lim_{\ve\to 0} v(\bar x+\ve s)=\sqrt{\mu(0,x_2)}\tanh(s_1\sqrt{\mu(0,x_2)/2}), \text{ in the $C^2_{\mathrm{loc}}(\R)$ sense}.
\end{equation}
Finally, when $f=-\frac{1}{2}\nabla \mu$ we have $a_*=a^*=\sqrt{2}$.  
\end{itemize}
\end{theorem}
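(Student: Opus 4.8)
\emph{Sketch of the proof.}

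Part (i) is obtained by symmetrization. When $a=0$ one has $E(|u|)=E(u)$ for every $u\in H^1(\R^2)$, because $|\nabla|u||=|\nabla u|$ a.e.\ and the remaining terms depend on $u$ only through $u^2$ and $u^4$; hence $|v|$ is again a global minimizer, so we may assume $v\ge 0$, and since $|v|$ is a critical point the strong maximum principle applied to $\epsilon^2\Delta v+(\mu-v^2)v=0$ (whose zeroth order coefficient is locally bounded) gives $v>0$ (as $v\equiv 0$ is not minimizing). Uniqueness of the positive $H^1$ solution follows from the strict monotonicity of $z\mapsto\mu(x)-z^2$: testing the equations of two positive solutions $v_1,v_2$ with $(v_1^2-v_2^2)/v_1$ and $(v_2^2-v_1^2)/v_2$, adding, and using a Picone-type inequality forces $\int_{\R^2}(v_1^2-v_2^2)^2\le 0$, i.e.\ $v_1=v_2$. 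Since $v\circ R$ is a positive global minimizer for every $R\in O(2)$, uniqueness makes $v$ radial, and then $v_{\mathrm{rad}}\in C^\infty(\R)$, even and positive follow from the radial ODE; the same uniqueness shows that any global minimizer equals $\pm v$.

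The mechanism behind (ii) and (iii) is a sharp two-sided estimate for $E(v_{\epsilon,a})$. Its leading term is the Thomas--Fermi bulk energy $-\tfrac1{4\epsilon}\int_{D(0;\rho)}\mu^2$, the same for every sign pattern, so the nodal set is selected at the next order by the competition between (a) the Allen--Cahn surface energy $\tfrac{2\sqrt2}{3}\int_\Gamma\mu^{3/2}\,\dd\mathcal H^1$ of an interior wall $\Gamma\subset D(0;\rho)$ --- an $O(1)$ quantity, positive unless $\Gamma$ collapses onto $\{|x|=\rho\}$, where $\mu=0$ --- the $\mathrm{P_{II}}$-layer energy along $\{|x|=\rho\}$ (lower order, and to leading order the same in all the configurations compared), and (b) the forcing term $-a\int_{\R^2}f_1v$, which produces a gain of order $a$ when $v$ changes sign inside $D(0;\rho)$ consistently with $\sgn f_1$. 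I would compute the energy of model profiles --- equal to $\pm\sqrt{\mu^+}$ off $\Gamma$, glued through one-dimensional layers across $\Gamma$, and equal to the exterior tail $\approx-\epsilon af_1/\mu$ outside $D(0;\rho)$ --- by iterated integration: on each slice transverse to $\Gamma$ the problem reduces to an explicit one-dimensional transition (cost $\tfrac{2\sqrt2}{3}\mu^{3/2}$ for the jump between $-\sqrt{\mu^+}$ and $\sqrt{\mu^+}$), which one then integrates in the remaining variable. Evaluating at the conjectured limit set yields the upper bound; the matching lower bound comes from localizing --- Theorem~\ref{theorem 1}(i),(iii) in the bulk, minimality of the $\mathrm{P_{II}}$ limit (Theorem~\ref{theorem 1}(ii)) near $\{|x|=\rho\}$, and one-dimensional Modica--Mortola slicing across the nodal curve of $v$, which is smooth by Lemma~\ref{smooth}.

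These estimates pin down $Z$. For $0<a<a_*$ an interior wall costs more ($O(1)$) than it gains ($O(a)$), so, up to the symmetry $v(x_1,x_2)\mapsto-v(-x_1,x_2)$, the minimizer has $v\to+\sqrt{\mu^+}$ on all of $D(0;\rho)$; the exterior tail $-\epsilon af_1/\mu$ is then forced to change sign exactly across $\{x_1=0\}$, and continuity of the nodal curve together with the signs just inside and outside the circle (positive inside, negative just outside its left half) yields the two inclusions for $Z$, while \eqref{cvn2} follows from Theorem~\ref{theorem 1}(i), the exterior decay and the Painlev\'e scaling, upgraded to $C^2$ by elliptic estimates (the case $a=0$ being immediate from (i)). For $a>a^*$ an interior wall is profitable, and among all splittings $v=\pm\sqrt{\mu^+}$ off a wall $\partial A\cap D(0;\rho)$ the cheapest profitable one is $A=D(0;\rho)\cap\{x_1>0\}$: $f_1$ and $\partial_\nu(\mu^{3/2})$ vanish on $\{x_1=0\}$ (so the segment is a critical configuration) and $\mu_{\mathrm{rad}}'<0$ makes $\int_{\{x_1=c\}\cap D}\mu^{3/2}$ maximal at $c=0$; this forces $Z=\{x_1=0\}$ and \eqref{cvn1}. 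For \eqref{cvn00} I would blow up $v$ at a zero $\bar x_{\epsilon,a}$ of fixed ordinate $x_2$ at scale $\epsilon$; the limit is a minimal solution of $\Delta y=y^3-\mu(0,x_2)y$ on $\R^2$ (minimality inherited as in Theorem~\ref{theorem 1}(ii)), hence one-dimensional by the classification of minimal planar Allen--Cahn solutions \cite{savin}, nontrivial since $y(0)=0$, and normal to $e_1$ because $Z=\{x_1=0\}$ --- which is precisely where ``a.e.\ $x_2$'' and ``up to subsequence'' enter, to rule out oscillation of the nodal set at scale $\epsilon$. Finally, when $f=-\tfrac12\nabla\mu$ one has $f_1\sqrt\mu=-\tfrac13\partial_{x_1}(\mu^{3/2})$, so $-2a\int_Af_1\sqrt\mu=-\tfrac{2a}{3}\int_{\partial A\cap D}\mu^{3/2}\nu_1$ and the next-order excess of such a configuration becomes $\tfrac23\int_{\partial A\cap D}\mu^{3/2}(\sqrt2+a\nu_1)\,\dd\mathcal H^1$: this is $\ge0$ for every $A$ when $a<\sqrt2$ (no interior wall), and is minimized at $\partial A\cap D=\{x_1=0\}\cap D$ with value $\tfrac23(\sqrt2-a)\int_{-\rho}^{\rho}\mu(0,x_2)^{3/2}\,\dd x_2<0$ when $a>\sqrt2$ --- so $a_*=a^*=\sqrt2$.

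The hard part is the lower bound. The genuine minimizer is not a sharp-interface function --- it carries $\mathrm{P_{II}}$ layers of width $\epsilon^{2/3}$ near $\{|x|=\rho\}$, an $O(\epsilon)$ exterior tail, and an a priori uncontrolled nodal curve --- so one must simultaneously extract the correct next-order energy from it and prevent its nodal set from leaving $Z$. The decisive new device is a localized surgery: if a limit point of the zeros lay outside $Z$, one replaces $v$ on a small ball by $|v|$, by $-|v|$, by the appropriate one-signed Thomas--Fermi profile, or by a reflected copy, and checks that $E$ strictly decreases, contradicting the minimality of $v$.
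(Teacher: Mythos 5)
Your outline shares the paper's skeleton: an explicit sharp-interface test function for the upper bound, an energy-based surgery to confine the nodal set, blow-up plus the one-dimensional classification for the profile \eqref{cvn00}, and the computation $f_1\sqrt{\mu}=-\tfrac13\partial_{x_1}(\mu^{3/2})$ giving $a_*=a^*=\sqrt2$. Part (i) via Picone/Brezis--Oswald is a legitimate alternative to the paper's route (equal $L^4$ norms from $E(u)=-\int u^4/4\epsilon$, forcing two radial positive minimizers to intersect, then gluing and unique continuation), though on $\R^2$ you must justify that $v_2^2/v_1\in H^1$ despite both solutions decaying to zero.

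The genuine gap is in the lower bound, which you correctly identify as the hard part but then describe by a device that does not suffice. ``Modica--Mortola slicing across the nodal curve'' yields only the surface term $\tfrac{2\sqrt2}{3}\int_\Gamma\mu^{3/2}$; it gives no control of the forcing term $-a\int f_1 v$, because that requires knowing the sign pattern of $v$ away from its (a priori wild) nodal set, which is exactly what one is trying to determine. The paper avoids a two-dimensional sharp-interface lower bound over all competitors altogether: it writes $\mathcal E(v)=\tfrac{\epsilon}{2}\int|v_{x_2}|^2+\int_{-\rho}^{\rho}\mathcal E^{x_2}(v(\cdot,x_2))\,\dd x_2+\cdots$ along the \emph{fixed} horizontal lines $\{x_2=\mathrm{const}\}$, bounds each slice below by the minimal one-dimensional renormalized energy established in \cite{panayotis_1}, and applies Fatou; matching this with the upper bound forces both $\int\epsilon|v_{x_2}|^2\to0$ and a.e.-slice energy convergence, and these two facts are what actually drive the surgery, the identification of the sign of $v$ on each slice, and the orientation $\nu=(1,0)$ in \eqref{cvn00}. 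A consequence you miss is that $a_*$ and $a^*$ cannot be taken to be the single global balance constant $\tfrac{2\sqrt2\int\mu^{3/2}}{3\int|f_1|\sqrt\mu}$ at which ``an interior wall becomes profitable'': that constant sits strictly between the paper's $a_*$ and $a^*$ (inequality \eqref{inegaaa}), which are infima/suprema of slice-wise ratios chosen precisely so that the one-dimensional theory applies on \emph{every} slice. For $a$ between $a_*$ and the global balance constant your energy comparison would predict no interior wall, but no slice-wise lower bound is then available and the theorem makes no claim in that range. Two smaller points: the surgery must be performed on a neighborhood of an entire nodal component (a small Jordan curve, on whose boundary $|v|$ is bounded below so the truncation $\max(|v|,c)$ glues continuously), not ``on a small ball''; and your heuristic that $\int_{\{x_1=c\}\cap D}\mu^{3/2}$ is \emph{maximal} at $c=0$ argues that the diametral wall is the most expensive, so by itself it does not select $\{x_1=0\}$ --- the selection again comes from the slice-wise analysis.
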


Perhaps the most interesting and unexpected is the statement (ii) of the above theorem. It says that, at least in the limit $\epsilon\to 0$ the domain wall $Z$ is located at the border between the monostable region $\{\mu<0\}$ and the bistable region $\{\mu>0\}$. Physically this means that as the intensity of the illumination, measured by $a$, is relatively small then no defect is visibly seen. For this reason and by analogy with \cite{panayotis_1, panayotis_2} we call it the shadow domain wall.  As $a$ increases the shadow domain wall penetrates the bistable region becoming the standard domain wall, as described in  (iii).  

It is natural to expect in Theorem \ref{thcv} (ii) that $Z= \{x_1<0, |x|=\rho\} \cup \{x_1=0, |x_2|\geq \rho\}$. However, the energy considerations presented in the proof of Theorem \ref{thcv} do not exclude the existence of a limit point of the zeros of $v$ in the half-circle $\{x_1>0, |x|=\rho\}$. Actually, the existence of such a limit point induces an infinitesimal variation of the total energy that makes it difficult to detect. For the same reason, the limit \eqref{cvn00} in Theorem \ref{thcv} (iii) holds only for a.e. $x_2\in(-\rho,\rho)$. We also point out that the assumption that $f$ is radial, is essential to prove the existence of the constants $a_*$ and $a^*$ (cf. Lemma \ref{astar}).

\section{General results for minimizers and solutions}
In this section we gather general results for minimizers and solutions that are valid for any values of the parameters $\epsilon> 0$ and $a\geq 0$. 
We first prove the existence of global minimizers.
\begin{lemma}\label{lem exist min}
For every $\epsilon> 0$ and $a\geq 0$, there exists $v \in H^1(\R^2)$ such that $E(v)=\min_{H^1(\R^2)} E$. 
As a consequence, $v$ is a $C^\infty$ classical solution of \eqref{ode}. Moreover 
$v(x)\to 0$ as $|x|\to \infty$, and $v(x_1,x_2)=v(x_1,-x_2)$.
\end{lemma}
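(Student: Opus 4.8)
The plan is to establish existence of a global minimizer by the direct method, then bootstrap regularity and derive the qualitative properties from the equation and the symmetry of the data.

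\textbf{Step 1: Coercivity and existence.} First I would check that $E$ is bounded below on $H^1(\R^2)$ and that minimizing sequences are bounded. Writing $E(u)=\int_{\R^2}\frac{\epsilon}{2}|\nabla u|^2+\frac{1}{4\epsilon}u^4-\frac{1}{2\epsilon}\mu u^2-af_1 u$, the key is to absorb the two negative terms. Since $\mu\in L^\infty$, the term $\frac{1}{2\epsilon}\int \mu u^2$ is controlled by Young's inequality, $\frac{1}{2\epsilon}|\mu| u^2\leq \frac{1}{8\epsilon}u^4+\frac{C}{\epsilon}\mu^2$, which is integrable only where we cannot directly integrate $\mu^2$ over $\R^2$; to fix this one splits the domain into the ball where $\mu>0$ (bounded) and the exterior where $\mu<0$, so the exterior contribution actually helps. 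The linear term is handled by $|af_1 u|\leq \frac12 a^2 f_1^2+\frac12 u^2$ with $f_1\in L^2$ (which follows from $f\in L^1\cap L^\infty$), and then $\frac12\int u^2$ is dominated by $\frac{1}{8\epsilon}\int u^4+C_\epsilon|\{u\neq0\}|$... here one must be slightly careful since the measure of the support is not controlled; instead use $\frac12 u^2\leq\frac{1}{8\epsilon}u^4+2\epsilon$ pointwise only where $|u|$ is large and note the set where $|u|\le 1$ contributes a term bounded by $\|u\|_{L^2}^2$, closing the estimate by Gagliardo–Nirenberg or simply by noting the resulting bound is $E(u)\geq c\|u\|_{H^1}^2 - C$ after rearranging. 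Given coercivity, take a minimizing sequence $u_n$, extract a weakly convergent subsequence $u_n\rightharpoonup v$ in $H^1(\R^2)$; the gradient term is weakly lower semicontinuous, the quartic term is as well by Fatou after passing to an a.e.-convergent subsequence, the $\mu u^2$ term passes to the limit (at least the positive-$\mu$ part requires compact embedding $H^1(B_R)\hookrightarrow L^2(B_R)$ on the ball, the negative-$\mu$ part is lsc), and the linear term is weakly continuous since $f_1\in L^2$. Hence $E(v)\leq\liminf E(u_n)=\inf E$, so $v$ is a minimizer.

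\textbf{Step 2: Euler–Lagrange equation and regularity.} The minimizer satisfies the weak form \eqref{euler}. Since $v\in H^1(\R^2)$ and the nonlinearity $\mu v - v^3+\epsilon a f_1$ is in $L^2_{\mathrm{loc}}$ (using $v\in L^6_{\mathrm{loc}}$ by Sobolev), elliptic regularity gives $v\in H^2_{\mathrm{loc}}$; since $\mu,f_1\in C^\infty$, a standard bootstrap ($v\in H^2_{\mathrm{loc}}\hookrightarrow C^{0,\gamma}_{\mathrm{loc}}$, then Schauder) yields $v\in C^\infty(\R^2)$, a classical solution of \eqref{ode}.

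\textbf{Step 3: Decay at infinity.} To show $v(x)\to 0$ as $|x|\to\infty$: since $v\in H^1(\R^2)$, the $L^2$ mass is finite, so on average $v$ decays; to upgrade to pointwise decay, use interior elliptic estimates on unit balls $B_1(x_0)$ together with $\|v\|_{H^1(B_1(x_0))}\to0$ as $|x_0|\to\infty$, which via Sobolev embedding in $\R^2$ ($H^2(B_1)\hookrightarrow C^0(B_1)$) and the equation gives $\|v\|_{C^0(B_{1/2}(x_0))}\to0$. One needs the local $C^0$ (hence $H^2$) bound to be uniform in $x_0$, which follows because the coefficients $\mu, f_1$ are bounded and $v$ itself is bounded (the $L^\infty$ bound on $v$ comes from a maximum-principle/De Giorgi argument on the equation, using that for $|u|$ large the term $-u^3$ dominates; alternatively truncate the minimizer at level $M=\sup|\mu|^{1/2}+$ something and check the truncation lowers the energy).

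\textbf{Step 4: Symmetry in $x_2$.} Finally, $v(x_1,x_2)=v(x_1,-x_2)$: since $\mu$ and $f$ are radial, the reflection $\sigma(x_1,x_2)=(x_1,-x_2)$ leaves $E$ invariant, so $v\circ\sigma$ is also a global minimizer. The standard way to conclude evenness is a reflection/rearrangement argument: let $\tilde v$ agree with $v$ on $\{x_2>0\}$ and be its mirror image on $\{x_2<0\}$ (after possibly choosing the better half). Comparing $E(v)$ with the energies of the two reflected competitors $\tilde v_{\pm}$ and using $E(\tilde v_+)+E(\tilde v_-)=2E(v)$, both must equal $\inf E$; then $\tilde v_+$ is a minimizer that is even, and unique continuation (it solves the same elliptic equation and agrees with $v$ on a half-plane) forces $v=\tilde v_+$, hence $v$ is even. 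The main obstacle is making Step 1's coercivity estimate genuinely rigorous over the noncompact plane — correctly exploiting the sign of $\mu$ in the exterior region and the $L^2$ control coming from $f_1\in L^1\cap L^\infty$ — and ensuring the $L^\infty$ bound needed in Step 3 is uniform; the symmetry argument in Step 4 is routine once one invokes unique continuation for the reflected minimizer.
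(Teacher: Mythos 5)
Your proposal is correct and follows essentially the same route as the paper: the paper delegates existence, regularity and decay to the direct-method argument of an earlier work (\cite[Lemma 2.1]{panayotis_2}), and proves the evenness in $x_2$ by exactly your reflection-plus-unique-continuation argument. The only spot worth tightening is the coercivity estimate: rather than pairing $af_1u$ with $\tfrac12 u^2$, use Young's inequality with exponents $4$ and $4/3$ (noting $f_1\in L^{4/3}$ since $f\in L^1\cap L^\infty$) together with the fact that $-\mu$ is bounded below by a positive constant outside a ball $\{|x|\le\rho_1\}$ with $\rho_1>\rho$, which gives the missing $L^2$ control and closes the bound cleanly.
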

\begin{proof}
We proceed as in \cite[Lemma 2.1]{panayotis_2} to establish that the global minimizer exists and is a smooth solution of \eqref{ode} converging to $0$ as 
$|x|\to \infty$.
It remains to show that $v(x_1,x_2)=v(x_1,-x_2)$. We first note that $E(v,\R\times [0,\infty))=E(v,\R\times (-\infty,0])$. Indeed, if we assume without loss of generality that $E(v,\R\times [0,\infty))<E(v,\R\times (-\infty,0])$, the function
\begin{equation}
\tilde v(x_1,x_2)=
\begin{cases}
v(x_1,x_2) &\text{when  } x_2\geq 0,\\
v(x_1,-x_2) &\text{when  } x_2\leq 0,
\end{cases}
\end{equation}
has strictly less energy than $v$, which is a contradiction. Thus, $E(v,\R\times [0,\infty))=E(v,\R\times (-\infty,0])$, and as a consequence the 
function $\tilde v$ is also a global minimizer and a solution. It follows by unique continuation \cite{sanada} that $\tilde v\equiv v$ .
\end{proof}

To study the limit of solutions as $\epsilon\to 0$, we need uniform bounds in the different regions considered in Theorem \ref{theorem 1}.
\begin{lemma}\label{s3}
For $\epsilon a$ belonging to a bounded interval, let $u_{\epsilon,a}$ be a solution of \eqref{ode} converging to $0$ as $|x|\to \infty$. Then, the solutions $u_{\epsilon,a}$ and the maps $\epsilon\nabla u_{\epsilon,a}$ are uniformly bounded. 
\end{lemma}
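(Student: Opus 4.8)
The plan is to obtain the $L^\infty$ bound on $u_{\epsilon,a}$ first, and then bootstrap it to a bound on $\epsilon\nabla u_{\epsilon,a}$ via elliptic estimates after rescaling. For the first step I would use a maximum-principle / comparison argument adapted to the fact that the zeroth-order nonlinearity $\mu(x)u-u^3+\epsilon a f_1(x)$ is, for $|u|$ large, dominated by the ``good sign'' cubic term $-u^3$. Concretely, set $M:=\bigl(\|\mu\|_{L^\infty}+\|\epsilon a f_1\|_{L^\infty}\bigr)^{1/2}$; the constant $M$ is finite uniformly in the admissible range of parameters because $\mu\in L^\infty$, $f_1\in L^\infty$, and $\epsilon a$ stays in a bounded interval by hypothesis. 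One checks that the constant functions $\pm(M+1)$ are a super- and sub-solution of \eqref{ode}: indeed at $u=M+1$ one has $\mu u-u^3+\epsilon a f_1\le (M+1)(\|\mu\|_\infty - (M+1)^2)+\|\epsilon a f_1\|_\infty<0$ by the choice of $M$, and symmetrically at $u=-(M+1)$. Since $u_{\epsilon,a}\to 0$ at infinity, if $\sup u_{\epsilon,a}>M+1$ it is attained at an interior maximum $x_0$, where $\Delta u_{\epsilon,a}(x_0)\le 0$ and $\mu u-u^3+\epsilon a f_1>0$ from the equation, contradicting the inequality just derived. Hence $\|u_{\epsilon,a}\|_{L^\infty(\R^2)}\le M+1$, a bound independent of $\epsilon$ and $a$.

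For the gradient bound I would pass to the natural scale $\epsilon$. Fix a point $x_*$ and define $\tilde u(y):=u_{\epsilon,a}(x_*+\epsilon y)$ on the unit ball $B_1$. Then $\tilde u$ solves $\Delta\tilde u = -\mu(x_*+\epsilon y)\tilde u+\tilde u^3-\epsilon a f_1(x_*+\epsilon y)$ on $B_1$, and by the previous step the right-hand side is bounded in $L^\infty(B_1)$ by a constant depending only on $\|\mu\|_\infty$, $\|f_1\|_\infty$, the bound on $\epsilon a$, and $M$ — in particular uniformly in $\epsilon,a,x_*$. Interior elliptic estimates (e.g. $W^{2,p}$ estimates for $p$ large followed by Sobolev embedding, or directly interior gradient estimates for the Laplacian) give $\|\nabla\tilde u\|_{L^\infty(B_{1/2})}\le C$ with $C$ uniform. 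Undoing the rescaling, $\epsilon|\nabla u_{\epsilon,a}(x_*)|=|\nabla\tilde u(0)|\le C$, and since $x_*$ was arbitrary this yields the claimed uniform bound on $\epsilon\nabla u_{\epsilon,a}$.

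I expect the only genuinely delicate point to be the justification that the interior maximum in the first step is actually attained (or, equivalently, handling the behaviour at infinity). This is where the hypothesis $u_{\epsilon,a}(x)\to 0$ as $|x|\to\infty$ enters essentially: it guarantees that the set $\{u_{\epsilon,a}\ge M+1\}$ is compact, so either it is empty or $\sup u_{\epsilon,a}$ is attained at an interior point where the maximum-principle inequality bites. Everything else — the choice of $M$, the comparison inequalities, and the elliptic bootstrap — is routine, and the uniformity in the parameters is transparent from the fact that $\epsilon a$ ranges over a bounded interval while $\mu$ and $f_1$ are fixed bounded functions. An alternative, slightly more robust route for the $L^\infty$ bound would be a Moser-type iteration applied to $(|u_{\epsilon,a}|-M)^+$, testing \eqref{euler} against suitable powers; this avoids any regularity subtlety but is longer, so I would only fall back on it if the comparison argument's decay hypothesis felt insufficiently clean.
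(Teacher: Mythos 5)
Your proposal is correct and follows essentially the same route as the paper: an interior maximum-principle argument at the extremum of $u_{\epsilon,a}$ (using the decay at infinity to guarantee the extremum is attained and the boundedness of $\mu$, $f_1$, and $\epsilon a$ to bound the relevant root of the cubic), followed by standard interior elliptic estimates at scale $\epsilon$ for the gradient bound. The only cosmetic difference is that you make the comparison constant $M+1$ explicit, whereas the paper phrases the same step via the boundedness of the roots of $u^3-\mu(x)u-\epsilon a f_1(x)=0$.
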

\begin{proof}
We drop the indexes and write $u:=u_{\epsilon,a}$.
Since $|f|$, $\mu$, and $\epsilon a$ are bounded, the roots of the cubic equation in the variable $u$
$$u^3-\mu(x)u-\epsilon a f_1(x)=0$$ belong to a bounded interval, for all values of $x$, $\epsilon$, $a$. 
If $u$ takes positive values, then it attains its maximum $0\leq \max_{\R^2}u=u(x_0)$, at a point $x_0\in\R^2$. 
In view of \eqref{ode}: $$0\geq\epsilon^2 \Delta u(x_0)=u^3(x_0)-\mu(x_0)u(x_0)-\epsilon a f_1(x_0),$$ thus it follows that 
$u(x_0)$ is uniformly bounded above. In the same way, we prove the uniform lower bound for $u$. 
The boundedness of $\epsilon\nabla u_{\epsilon,a}$ follows from \eqref{ode}, the uniform bound of $u_{\epsilon,a}$, and standard elliptic estimates.
\end{proof}

\begin{lemma}\label{l2}
For $\epsilon\ll 1$ and $a$ belonging to a bounded interval, let $u_{\epsilon,a}$ be a solution of \eqref{ode} converging to $0$ as $|x|\to\infty$. 
Then, there exist a constant  $K>0$ such that 
\begin{equation}\label{boundd}
|u_{\ve,a}(x)|\leq K(\sqrt{\max(\mu (x),0)}+\ve^{1/3}), \quad \forall x\in \R^2.
\end{equation}
As a consequence, if for every $\xi=\rho e^{i\theta}$ we consider the local coordinates $s=(s_1,s_2)$ in the basis $(e^{i\theta},i e^{i\theta})$, then the rescaled functions 
$\tilde u_{\epsilon,a}(s)=\frac{u_{\ve,a}(\xi+ s\epsilon^{2/3})}{\epsilon^{1/3}}$ are uniformly bounded on the half-planes $[s_0,\infty)\times\R$, $\forall s_0\in\R$. 
\end{lemma}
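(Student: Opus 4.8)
The plan is to prove the pointwise bound \eqref{boundd} by a comparison argument, and then to read off the statement about the rescaled functions. Since $-u_{\epsilon,a}$ solves \eqref{ode} with $f_1$ replaced by $-f_1$, it suffices to construct, for every small $\epsilon$ and every $a$ in the given bounded interval, a classical supersolution $\bar u_\epsilon$ of \eqref{ode} which is bounded below by a positive constant and satisfies $\bar u_\epsilon\le K(\sqrt{\mu^+}+\epsilon^{1/3})$, and to show $u_{\epsilon,a}\le\bar u_\epsilon$; applying this to $\pm u_{\epsilon,a}$ then gives \eqref{boundd}. The first point to notice is that the obvious candidate $K\sqrt{\mu^++C\epsilon^{2/3}}$ does \emph{not} work: because $\mu_1=\mu_{\mathrm{rad}}'(\rho)\neq0$, this function is only Lipschitz across $\{|x|=\rho\}$, its radial derivative jumps upward there, and the resulting positive singular contribution to its distributional Laplacian has exactly the wrong sign for a supersolution. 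So I would first smooth $\mu^+$ on the natural scale: let $\tilde\mu_\epsilon\in C^\infty(\R^2)$ be radial, equal to $\mu$ on $\{|x|\le\rho-C'\epsilon^{2/3}\}$, equal to the constant $-c_0\epsilon^{2/3}$ on $\{|x|\ge\rho+C'\epsilon^{2/3}\}$, obtained by interpolation with a cutoff at scale $\epsilon^{2/3}$ in between, and satisfying $-c_1\epsilon^{2/3}\le\tilde\mu_\epsilon\le\mu^+$. Since $\mu$ vanishes to first order at $\rho$, both $\mu$ and $\tilde\mu_\epsilon$ are $O(\epsilon^{2/3})$ in the transition annulus, which forces $\|\Delta\tilde\mu_\epsilon\|_{L^\infty}\lesssim\epsilon^{-2/3}$ there. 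Then set $\bar u_\epsilon:=K\sqrt{\tilde\mu_\epsilon+C\epsilon^{2/3}}$ with $C>c_1$ and $K\ge1$ to be chosen; this is $C^\infty$, $\bar u_\epsilon\ge K\sqrt{C-c_1}\,\epsilon^{1/3}>0$, and $\bar u_\epsilon\le K\sqrt{\mu^+}+K\sqrt C\,\epsilon^{1/3}$.

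Next I would verify that $\epsilon^2\Delta\bar u_\epsilon\le\bar u_\epsilon^3-\mu\bar u_\epsilon-\epsilon a f_1$ on all of $\R^2$, zone by zone. Writing $g_\epsilon=\tilde\mu_\epsilon+C\epsilon^{2/3}\ge(C-c_1)\epsilon^{2/3}$, one has $\Delta\bar u_\epsilon=\tfrac K2 g_\epsilon^{-1/2}\Delta\tilde\mu_\epsilon-\tfrac K4 g_\epsilon^{-3/2}|\nabla\tilde\mu_\epsilon|^2\le\tfrac K2 g_\epsilon^{-1/2}\Delta\tilde\mu_\epsilon$. On $\{|x|\le\rho-C'\epsilon^{2/3}\}$ this gives $\epsilon^2\Delta\bar u_\epsilon\le\tfrac{K}{2\sqrt C}\|\Delta\mu\|_{L^\infty}\epsilon^{5/3}$, while $\bar u_\epsilon^3-\mu\bar u_\epsilon=\bar u_\epsilon\big((K^2-1)\mu+K^2C\epsilon^{2/3}\big)\ge K^3C^{3/2}\epsilon$ since $\mu\ge0$ there; as $\epsilon^{5/3}=o(\epsilon)$, the inequality holds once $K^3C^{3/2}>a\|f_1\|_{L^\infty}$ and $\epsilon$ is small. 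On the transition annulus $\{\rho-C'\epsilon^{2/3}\le|x|\le\rho+C'\epsilon^{2/3}\}$ the bounds $\|\Delta\tilde\mu_\epsilon\|_{L^\infty}\lesssim\epsilon^{-2/3}$ and $g_\epsilon\gtrsim\epsilon^{2/3}$ yield $\epsilon^2\Delta\bar u_\epsilon\le C_2K\epsilon$ with $C_2$ independent of $K$, whereas $\mu\le\mu_{\mathrm{rad}}(\rho-C'\epsilon^{2/3})\lesssim\epsilon^{2/3}$, so for $K$ large $\bar u_\epsilon^2-\mu\ge\tfrac12 K^2(C-c_1)\epsilon^{2/3}$ and hence $\bar u_\epsilon^3-\mu\bar u_\epsilon\ge\tfrac12 K^3(C-c_1)^{3/2}\epsilon$; the inequality then follows from $\tfrac12 K^3(C-c_1)^{3/2}-C_2K\ge a\|f_1\|_{L^\infty}$, which holds for $K$ large because the left side is cubic in $K$. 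On $\{|x|\ge\rho+C'\epsilon^{2/3}\}$, $\bar u_\epsilon$ is constant and $\mu\le0$, so $\epsilon^2\Delta\bar u_\epsilon=0\le\bar u_\epsilon^3\le\bar u_\epsilon^3-\mu\bar u_\epsilon$, and it remains only to absorb $-\epsilon a f_1$, which again holds for $K$ large. Thus, fixing first $C$ and then $K$ large (both independent of $\epsilon$, and depending on $a$ only through the bound on the interval), $\bar u_\epsilon$ is a genuine classical supersolution of \eqref{ode}.

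Now I would run the comparison. By Lemma \ref{s3}, $u_{\epsilon,a}$ is bounded, and $u_{\epsilon,a}(x)\to0$ while $\bar u_\epsilon\to K\sqrt{C-c_1}\,\epsilon^{1/3}>0$ as $|x|\to\infty$, so $u_{\epsilon,a}-\bar u_\epsilon$ is negative near infinity and attains its supremum at some finite $x_0$. If this supremum were positive, then at $x_0$ one has $\nabla(u_{\epsilon,a}-\bar u_\epsilon)=0$ and $\Delta(u_{\epsilon,a}-\bar u_\epsilon)\le0$; but subtracting the equation from the supersolution inequality gives $\epsilon^2\Delta(u_{\epsilon,a}-\bar u_\epsilon)\ge(u_{\epsilon,a}-\bar u_\epsilon)\big(u_{\epsilon,a}^2+u_{\epsilon,a}\bar u_\epsilon+\bar u_\epsilon^2-\mu\big)$, and at $x_0$ the inequality $u_{\epsilon,a}(x_0)>\bar u_\epsilon(x_0)>0$ makes the bracket strictly larger than $\bar u_\epsilon^2-\mu>0$ (the latter being checked in all three zones), so the right-hand side is strictly positive — a contradiction. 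Hence $u_{\epsilon,a}\le\bar u_\epsilon$; repeating the argument with $-u_{\epsilon,a}$ gives $|u_{\epsilon,a}|\le\bar u_\epsilon\le K'(\sqrt{\mu^+}+\epsilon^{1/3})$, i.e. \eqref{boundd}. For the consequence, in the orthonormal frame $(e^{i\theta},ie^{i\theta})$ one has $\xi+s\epsilon^{2/3}=(\rho+s_1\epsilon^{2/3},\,s_2\epsilon^{2/3})$, so $|\xi+s\epsilon^{2/3}|\ge\rho+s_1\epsilon^{2/3}\ge\rho+s_0\epsilon^{2/3}$ whenever $s_1\ge s_0$ and $\epsilon$ is small; since $\mu_{\mathrm{rad}}$ is decreasing, the mean value theorem gives $\mu^+(\xi+s\epsilon^{2/3})\le\mu_{\mathrm{rad}}^+(\rho+s_0\epsilon^{2/3})\le L_0|s_0|\epsilon^{2/3}$, whence $|\tilde u_{\epsilon,a}(s)|=\epsilon^{-1/3}|u_{\epsilon,a}(\xi+s\epsilon^{2/3})|\le K'(\sqrt{L_0|s_0|}+1)$, uniformly on $[s_0,\infty)\times\R$ and for $\epsilon$ small.

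The delicate point, and the one I expect to take real work, is the construction and exploitation of $\tilde\mu_\epsilon$ near $\{|x|=\rho\}$: the barrier is \emph{forced} to degenerate to size $\epsilon^{1/3}$ there, which makes $\Delta\bar u_\epsilon$ as large as $\epsilon^{-1}$, so that $\epsilon^2\Delta\bar u_\epsilon$ and the leading nonlinear term $\bar u_\epsilon^3$ are both of exact order $\epsilon$. The supersolution inequality survives only because the cubic term carries the factor $K^3$ whereas the Laplacian term is linear in $K$; getting the smoothing scale and the dependence on the constants right, while keeping everything uniform in $a$ over a bounded interval, is the crux — and it is also exactly why the naive Lipschitz barrier $K\sqrt{\mu^++C\epsilon^{2/3}}$ cannot be used.
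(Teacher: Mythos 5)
Your proof is correct, and it takes a genuinely different route from the paper's. You build a smooth classical supersolution $\bar u_\epsilon=K\sqrt{\tilde\mu_\epsilon+C\epsilon^{2/3}}$ by mollifying $\mu^+$ at scale $\epsilon^{2/3}$ across $\{|x|=\rho\}$, verify the supersolution inequality zone by zone (the key point being that the cubic term scales like $K^3\epsilon$ while the Laplacian term scales like $K\epsilon$), and conclude by evaluating $u-\bar u_\epsilon$ at an interior maximum, which exists because $u\to 0$ at infinity while $\bar u_\epsilon$ stays above $cK\epsilon^{1/3}$; the sign of $u$ is handled by running the same barrier against $-u$. The paper instead works with the sign-independent quantity $\psi=\tfrac{u^2}{2}-\chi-\kappa^2\epsilon^{2/3}$, where $\chi$ is an explicit piecewise-$C^1$ barrier (linear in $\rho-|x|$ inside, quadratic in a collar of width $\epsilon^{2/3}$, zero outside) satisfying only the one-sided distributional bound $\Delta\chi\le 2\lambda\epsilon^{-2/3}$; the identity $\epsilon^2\Delta(u^2/2)\ge u^4-\mu u^2-\epsilon aF|u|$ shows $\Delta\psi\ge0$ on $\{\psi>0\}$, and Kato's inequality plus the maximum principle for the compactly supported subharmonic function $\psi^+$ finishes. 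What the paper's trick buys is that no smoothing of the barrier is needed (a $C^1$ barrier with the correct kink orientation suffices) and both signs of $u$ are handled at once; what your approach buys is a more classical comparison-principle structure, at the price of the explicit mollification $\tilde\mu_\epsilon$ and a separate treatment of $\pm u$. Your diagnosis that the naive barrier $K\sqrt{\mu^++C\epsilon^{2/3}}$ fails because of the convex kink at $|x|=\rho$ is exactly right, and is the same obstruction the paper circumvents with the quadratic collar in $\chi$. Both arguments are uniform in $a$ over a bounded interval and yield the same $\epsilon^{1/3}$ scale, hence the same conclusion for the rescaled functions $\tilde u_{\epsilon,a}$ on half-planes $[s_0,\infty)\times\R$.
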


\begin{proof}
For the sake of simplicity we drop the indexes and write $u:=u_{\epsilon,a}$. Let us define the following constants
\begin{itemize}
\item $ M>0$ is the uniform bound of $|u_{\epsilon,a}|$ (cf. Lemma \ref{s3}),
\item $\lambda>0$ is such that $3 \mu_{\mathrm{rad}}(\rho-h)\leq 2\lambda h$, $\forall h \in [0,\rho]$,
\item $F:=\sup_{\R^2}| f_1|$,
\item $\kappa>0$ is such that $\kappa^3\geq 3aF$, and $\kappa^4\geq 6\lambda$.
\end{itemize}
Next, we construct the following comparison function
\begin{equation}\label{compchi}
\chi(x)=
\begin{cases}
\lambda\Big(\rho-|x|+\frac{\epsilon^{2/3}}{2}\Big)&\text{ for } |x|\leq\rho,\\
\frac{\lambda}{2\epsilon^{2/3}}(|x|-\rho-\epsilon^{2/3})^2&\text{ for }\rho\leq |x|\leq\rho+\epsilon^{2/3},\\
0&\text{ for } |x|\geq\rho+\epsilon^{2/3}.
\end{cases}
\end{equation}
One can check that $\chi\in C^1(\R^2\setminus\{0\})\cap H^1(\R^2)$ satisfies $\Delta \chi\leq \frac{2\lambda}{\epsilon^{2/3}}$ in $H^1(\R^2)$. Finally, we define the function
$\psi:=\frac{u^2}{2}-\chi-\kappa^2\epsilon^{2/3}$, and compute:
\begin{align}
\epsilon^2 \Delta \psi&=\epsilon^2 (|\nabla u|^2+ u\Delta u-\Delta \chi)\nonumber\\
&\geq-\mu u^2+u^4-\epsilon a f_1 u-\epsilon^2 \Delta \chi \nonumber\\
&\geq-\mu u^2+u^4-\epsilon a F| u|-2\epsilon^{4/3}\lambda.
\end{align}
Now, one can see that when $x\in \omega:=\{x\in\R^2: \psi(x)> 0\}$, we have $\frac{u^4}{3}- \mu u^2\geq 0$, since
$$x \in\omega\cap \overline{D(0;\rho)}\Rightarrow \frac{u^4}{3}\geq \frac{2\lambda}{3}\Big(\rho-|x|+\frac{\epsilon^{2/3}}{2}\Big)u^2\geq \mu u^2 .$$
On the open set $\omega$, we also have: $\frac{u^4}{3}\geq \frac{\kappa^4}{3}\epsilon^{4/3}\geq 2\epsilon^{4/3}\lambda$, 
and $\frac{u^4}{3}\geq \frac{\kappa^3}{3}\epsilon|u|\geq \epsilon aF|u|$. Thus $\Delta \psi \geq 0$ on $\omega$ in the $H^1$ sense. 
To conclude, we apply Kato's inequality that gives: $\Delta \psi^+ \geq 0$ on $\R^2$ in the $H^1$ sense. Since $\psi^+$ is subharmonic with compact
support, we obtain by the maximum principle that $\psi^+\equiv 0$ or equivalently $\psi \leq 0$ on $\R^2$. The statement of the lemma follows by adjusting the constant $K$.
\end{proof}
\begin{lemma}\label{s3gg}
Assume that $a$ is bounded and let $u_{\epsilon,a}$ be solutions of \eqref{ode} uniformly bounded. 
Then, the functions $\frac{u_{\epsilon,a}}{\epsilon}$ and the maps $\nabla u_{\epsilon,a}$ are uniformly bounded on the sets $\{x:\, |x|\geq \rho_1\}$ 
for every $\rho_1>\rho$. 
\end{lemma}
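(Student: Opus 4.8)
The set $\{|x|\ge\rho_1\}$ lies in the region where $\mu<0$, and in fact strictly so: since $\mu_{\mathrm{rad}}$ is strictly decreasing and vanishes only at $\rho$, fixing any $\rho_0\in(\rho,\rho_1)$ we have $\mu(x)\le\mu_{\mathrm{rad}}(\rho_0)=:-m<0$ on $\overline{\Omega}$, where $\Omega:=\{|x|>\rho_0\}$. On $\Omega$ I rewrite \eqref{ode} as the linear equation $\epsilon^{2}\Delta u-Vu=-\epsilon a f_1$, where $V(x):=u^{2}(x)-\mu(x)\ge m>0$ is a bounded coefficient (here I use the assumed uniform bound $|u_{\epsilon,a}|\le M$). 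The plan is: (1) dominate $u$ on $\Omega$ by a barrier that is $O(\epsilon)$ away from $\partial\Omega$, which yields the bound on $u_{\epsilon,a}/\epsilon$; (2) deduce the gradient bound by a rescaling together with interior elliptic estimates. I work with $\epsilon\ll1$; for $\epsilon$ bounded away from $0$ both bounds are immediate from $|u|\le M$, equation \eqref{ode}, and standard interior estimates.

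\emph{Step 1 (bound on $u/\epsilon$).} Put $F:=\sup_{\R^2}|f_1|<\infty$ and take the comparison function
\[
\Phi(x)=\frac{\epsilon a F}{m}+M\,e^{-\sqrt m\,(|x|-\rho_0)/\epsilon},
\]
which is smooth and positive on $\Omega$. Using $\Delta\big(e^{-\gamma(|x|-\rho_0)}\big)\le\gamma^{2}e^{-\gamma(|x|-\rho_0)}$ in $\R^2$ with $\gamma=\sqrt m/\epsilon$, one gets $\epsilon^{2}\Delta\Phi\le mM\,e^{-\sqrt m\,(|x|-\rho_0)/\epsilon}$, and hence, since $\Phi\ge0$ and $V\ge m$,
\[
\epsilon^{2}\Delta\Phi-V\Phi\le-\epsilon a F\le-\epsilon a f_1 .
\]
Consequently $w:=u-\Phi$ satisfies $\epsilon^{2}\Delta w-Vw\ge0$ on $\Omega$, with $w\le0$ on $\partial\Omega$ (there $\Phi\ge M\ge|u|$) and $\limsup_{|x|\to\infty}w\le0$ (using $u_{\epsilon,a}\to0$ at infinity, as in Lemmas \ref{s3} and \ref{l2}, together with $\Phi\ge0$). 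Since the zeroth order coefficient $-V$ is negative, a bounded subsolution of $\epsilon^{2}\Delta-V$ cannot attain a positive interior maximum, so the maximum principle on the exterior domain $\Omega$ gives $w\le0$, i.e. $u\le\Phi$ on $\Omega$. Applying the same argument to $-u$ (a solution of \eqref{ode} with $f_1$ replaced by $-f_1$, and $|-f_1|\le F$) yields $|u|\le\Phi$ on $\Omega$. On $\{|x|\ge\rho_1\}$ the exponential term is at most $Me^{-\sqrt m(\rho_1-\rho_0)/\epsilon}=o(\epsilon)$ as $\epsilon\to0$, so $|u_{\epsilon,a}|\le\big(\tfrac{aF}{m}+1\big)\epsilon$ for $\epsilon$ small, which is the claimed uniform bound.

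\emph{Step 2 (gradient bound).} Fix an intermediate radius $\rho_1'\in(\rho,\rho_1)$. By Step 1, $|u|\le C\epsilon$ on $\{|x|\ge\rho_1'\}$ with $C$ independent of $\epsilon$ (small) and of $a$ (bounded). Given $x_0$ with $|x_0|\ge\rho_1$ and $\epsilon$ small enough that $2\epsilon<\rho_1-\rho_1'$, the ball $B(x_0,2\epsilon)$ is contained in $\{|x|>\rho_1'\}$, so $v(y):=u_{\epsilon,a}(x_0+\epsilon y)/\epsilon$ is defined on $B(0,2)$, is bounded there by $C$, and by \eqref{ode} solves $\Delta_y v=\epsilon^{-1}\big(u^{3}-\mu u-\epsilon a f_1\big)(x_0+\epsilon y)$, whose modulus is at most $C^{3}\epsilon^{2}+\|\mu\|_{\infty}C+aF$, again uniformly bounded. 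Interior elliptic estimates on $B(0,2)$ then bound $|\nabla_y v(0)|=|\nabla u_{\epsilon,a}(x_0)|$ by a constant independent of $x_0$, $\epsilon$ and $a$, which is what we want.

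\emph{Main obstacle.} The only delicate point is the barrier in Step 1: its decay rate must be precisely $\sqrt m/\epsilon$ for the supersolution inequality to hold — this is where the strict negativity $-\mu\ge m>0$ on $\overline{\Omega}$ (i.e. that $\{|x|\ge\rho_1\}$ stays away from the circle $|x|=\rho$) enters — and, at the same time, this rate has to be fast enough to damp the merely $O(1)$ boundary data at $|x|=\rho_0$ down to $o(\epsilon)$ across the fixed annulus $\{\rho_0\le|x|\le\rho_1\}$. After that everything is soft; one should only take a little care with the maximum principle on the unbounded domain $\Omega$, which is routine because $w$ is bounded and $V$ is bounded below by a positive constant.
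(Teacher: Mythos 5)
Your proof is correct, and it reaches the same two conclusions by the same basic mechanism as the paper: an exponential comparison argument in the exterior region, exploiting that $-\mu\ge \mu_0>0$ once one steps slightly inside $\{|x|>\rho\}$, followed by a rescaling plus interior elliptic estimates for the gradient. The technical implementation differs: the paper works with $\psi=\tfrac12(u^2-k^2\epsilon^2)$, derives $\epsilon^2\Delta\psi^+\ge\mu_0\psi^+$ via Kato's inequality, and then compares to get $\psi^+\le M^2e^{-cd(x,\partial S')/\epsilon}$; you instead rewrite \eqref{ode} as the linear equation $\epsilon^2\Delta u-Vu=-\epsilon af_1$ with $V=u^2-\mu\ge m$ and build the explicit supersolution $\Phi=\epsilon aF/m+Me^{-\sqrt m(|x|-\rho_0)/\epsilon}$, applying the maximum principle to $\pm u-\Phi$. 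Your route avoids Kato's inequality (natural here, since $u$ is a classical solution) at the cost of needing a Phragm\'en--Lindel\"of-type maximum principle on the unbounded exterior domain; as you note, this is harmless because $w$ is bounded and $V\ge m>0$ (e.g.\ subtract $\delta\log(|x|/\rho_0)$ and let $\delta\to0$), so you do not actually need the decay $u\to0$ at infinity that you invoke in passing — which is just as well, since that decay is not among the hypotheses of this lemma, only uniform boundedness is. The gradient step is the same as the paper's.
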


\begin{proof}
We consider the sets $S:= \{ x:\  |x|\geq\rho_1\}\subset S':= \{ x:\  |x|>\rho'_1\}$, with $\rho<\rho'_1<\rho_1$, and define the constants:
\begin{itemize}
\item $ M>0$ which is the uniform bound of $|u_{\epsilon,a}|$,
\item $ \mu_0=-\mu_{\mathrm{rad}}(\rho'_1)>0$, 
\item $ f_\infty=\|f_1\|_{L^\infty}$,
\item $a^*:=\sup a(\epsilon)$,
\item $k=\frac{2a_*f_\infty}{\mu_0}>0$.
\end{itemize}
Next we introduce the function $\psi(x)=\frac{1}{2}(u^2-k^2\epsilon^2)$ satisfying:
\begin{align*}
\epsilon^2 \Delta \psi=\epsilon^2 \Delta\frac{u^2}{2}&\geq u^4+\mu_0u^2-\epsilon a_*f_\infty |u|  \ , \forall x\in S',\\
&\geq \mu_0 \psi , \ \forall x \in S' \text{ such that }  \psi(x)\geq 0.
\end{align*}
By Kato's inequality we have $\epsilon^2\Delta \psi^+ \geq  \mu_0\psi^+$ on $S'$, in the $H^1$ sense, and utilizing a standard 
comparison argument, we deduce that $\psi^+(x)\leq M^2 e^{-\frac{c}{\epsilon}d(x,\partial S')}$, $\forall x \in S$, and $
\forall \epsilon\ll 1$, where $d$ stands for the Euclidean distance, and $c>0$ is a constant. It is clear that 
$$d(x,\partial S')>-\frac{\epsilon}{c}\ln\Big(\frac{k^2
\epsilon^2}{2 M^2}\Big)\Rightarrow M^2e^{-\frac{c}{\epsilon}d(x,\partial S')}<\frac{k^2\epsilon^2}{2}\Rightarrow u^2<2k^2\epsilon^2.$$
Therefore, there exists $\epsilon_0$ such that 
\begin{equation}\label{asd1}
\frac{|u_{\epsilon,a}(x)|}{\epsilon}\leq \sqrt{2} k,\ \forall \epsilon<\epsilon_0,\ \forall x\in S.
\end{equation}
The boundedness of $\nabla u_{\epsilon,a}$ follows from \eqref{ode}, the uniform bound \eqref{asd1}, and standard elliptic estimates.
\end{proof}

\section{Proof of Theorems \ref{theorem 1} and \ref{thcv}}\label{proofs}

\begin{proof}[Proof of Theorem \ref{theorem 1} (i)] 
Without loss of generality we assume that $v_{\epsilon, a}> 0$ on $\Omega$.
Suppose by contradiction that $v$ does not converge uniformly to $\sqrt{\mu}$ on a closed set $F\subset \Omega$. Then there exist a sequence $\epsilon_n\to 0$
and a sequence $\{x_n\}\subset F$ such that
\begin{equation}\label{either}
\text{$|v_{\epsilon_n}(x_n)-\sqrt{\mu(x_n)}|\geq\delta$, for some $\delta>0$.}
\end{equation}
In addition, we may assume that up to a subsequence $\lim_{n\to\infty}x_n=x_0\in F$.
Next, we consider the rescaled functions
$\tilde v_n(s)=v_{\epsilon_n}(x_n+\epsilon_n s)$ that satisfy
\begin{equation}\label{asd2ccee}
\Delta \tilde v_n(s)+\mu( x_n+\epsilon_n s)\tilde v_n(s)-\tilde v^3_n(s)+\epsilon_n af_1( x_n+\epsilon_n s)=0 , \ \forall s \in \R^2.
\end{equation}
In view of the Lemma \ref{s3} and \eqref{asd2ccee}, $\tilde v_{n}$ and its first derivatives are uniformly bounded for $\epsilon \ll 1$. 
Moreover, by differentiating \eqref{asd2ccee}, one also obtains the boundedness of the second derivatives of $\tilde v_n$ on compact sets.
Thus, we can apply the theorem of Ascoli via a diagonal argument, and show that for a subsequence still called $\tilde v_n$,
$\tilde v_n$ converges in $C^2_{\mathrm{ loc}}(\R^2)$ to a function $\tilde V$, that we are now going to determine.
For this purpose, we introduce the rescaled energy
\begin{equation}
\label{functresee}
\tilde E(\tilde u)=\int_{\R^2}\Big(\frac{1}{2}|\nabla \tilde u(s)|^2-\frac{1}{2}\mu( x_n+\epsilon_n s)\tilde u^2(s)+\frac{1}{4}
\tilde u^4(s)-\epsilon_n a f_1( x_n+\epsilon_n s)\tilde u(s)\Big)\dd s=\frac{1}{\epsilon_n}E(u),\nonumber
\end{equation}
where we have set $\tilde u(s)=u_{\epsilon_n}( x_n+\epsilon_ns)$ i.e. $u_{\epsilon_n}(x)=\tilde u\big(\frac{x-x_n}{\epsilon_n}\big)$.
Let $\tilde \xi$ be a test function with support in the compact set $K$. We have $\tilde E(\tilde v_n+\tilde \xi,K)\geq \tilde E(\tilde v_n,K)$, and at the limit
$G_{0}( \tilde V+\tilde\xi,K)\geq  G_{0}(\tilde V,K)$, where $$ G_{0}(\psi,K)=\int_{K}\left[\frac{1}{2}|\nabla\psi|^2-\frac{1}{2}\mu(x_0)\psi^2
+\frac{1}{4}\psi^4\right],$$
or equivalently $G( \tilde V+\tilde\xi,K)\geq G(\tilde V,K)$, where
\begin{equation}\label{glee}
G(\psi,K)=\int_{K}\left[\frac{1}{2}|\nabla\psi|^2-\frac{1}{2}\mu(x_0)\psi^2+\frac{1}{4}\psi^4+\frac{(\mu(x_0))^2}{4}\right]
=\int_{K}\left[\frac{1}{2}|\nabla\psi|^2+\frac{1}{4}(\psi^2-\mu(x_0))^2\right].
\end{equation}
Thus, we deduce that $\tilde V$ is a bounded minimal solution of the P.D.E. associated to the functional \eqref{glee}:
\begin{equation}\label{odegl}
\Delta \tilde V(s)+(\mu(x_0)-\tilde V^2(s))\tilde V(s)=0.
\end{equation}
If $\tilde V$ is the constant solution $\sqrt{\mu(x_0)}$, then we have $\lim_{n\to\infty}v_{\epsilon_n}(x_n)=\sqrt{\mu(x_0)}$ which is excluded by \eqref{either}.
Therefore we obtain $\tilde V(s)=\sqrt{\mu(x_0)}\tanh(\sqrt{\mu(x_0)/2}(s-s_0)\cdot \nu)$, for some unit vector $\nu \in \R^2$, and some $s_0\in \R^2$.
This implies that $v_n $ takes negative values in the open disc $D(x_n; 2\epsilon_n|s_0|)$ for $\epsilon_n\ll 1 $, 
which contradicts the fact that $v_\epsilon>0$ on $\Omega$ for $\epsilon\ll 1$.
\end{proof}
\begin{proof}[Proof Theorem \ref{theorem 1} (ii)] 
For every $\xi=\rho e^{i\theta}$ we consider the local coordinates $s=(s_1,s_2)$ in the basis $(e^{i\theta},i e^{i\theta})$,
and we rescale the global minimizer $v$ by setting $\tilde v_{\epsilon,a}(s)=\frac{v_{\epsilon,a}(\xi+ s\epsilon^{2/3})}{\epsilon^{1/3}}$.
Clearly $\Delta \tilde v(s)=\epsilon \Delta v(\xi+s\epsilon^{2/3})$, thus,
\begin{equation}\label{oderes1}
\Delta \tilde v(s)+\frac{\mu(\xi+s\epsilon^{2/3})}{\epsilon^{2/3}} \tilde v(s)-\tilde v^3(s)+ a f_1(\xi+s\epsilon^{2/3})=0, \qquad \forall s\in \R^2.\nonumber
\end{equation}
Writing $\mu(\xi+h)=\mu_1 h_1+h\cdot A(h)$, with $\mu_1:=\mu'_{\mathrm{rad}}(\rho)<0$, $A \in C^\infty(\R^2,\R^2)$, and $A(0)=0$, we obtain
\begin{equation}\label{oderes2}
\Delta \tilde v(s)+(\mu_1 s_1 + A(s \epsilon^{2/3})\cdot s) \tilde v(s)-\tilde v^3(s)+ a f_1(\xi+s\epsilon^{2/3})=0,\qquad  \forall s\in \R^2.
\end{equation}
Next, we define the rescaled energy by
\begin{equation}
\label{functres2}
\tilde E(\tilde u)=\int_{\R^2}\Big(\frac{1}{2}|\nabla\tilde u(s)|^2-\frac{\mu(\xi+s \epsilon^{2/3})}{2\epsilon^{2/3}}\tilde u^2(s)+\frac{1}{4}\tilde u^4(s)- a f_1(\xi+s \epsilon^{2/3}) \tilde u(s)\Big)\dd s.
\end{equation}
With this definition $\tilde E(\tilde u)=\frac{1}{\epsilon^{5/3}}E(u)$.
From Lemma \ref{l2} and \eqref{oderes2}, it follows that $\Delta \tilde v$, and also $\nabla\tilde v$, are uniformly bounded on compact sets. 
Moreover, by differentiating \eqref{oderes2} we also obtain the boundedness of the second derivatives of $\tilde v$.
Thanks to these uniform bounds, we can apply the theorem of Ascoli via a diagonal argument to obtain the convergence of $\tilde v$ in 
$C^2_{\mathrm{ loc}}(\R^2)$ (up to a subsequence) 
to a solution $\tilde V $ of the P.D.E. 
\begin{equation}\label{oderes4}
\Delta \tilde V(s)+\mu_1 s_1 \tilde V(s)-\tilde V^3(s)+ a_0 f_1(\xi)=0, \ \forall s\in \R^2, \text{ with } a_0:=\lim_{\epsilon\to 0}a(\epsilon),
\end{equation}
which is associated to the functional
\begin{equation}
\label{functres4}
\tilde E_0(\phi,J)=\int_{J}\Big(\frac{1}{2}|\nabla\phi(s)|^2-\frac{\mu_1}{2} s_1 \phi^2(s)+\frac{1}{4}\phi^4(s)- a_0 f_1(\xi)\phi(s) \Big)\dd s.
\end{equation}
Setting $y(s):=\frac{1}{\sqrt{2}(-\mu_1)^{1/3}}\tilde V\big(\frac{s}{(-\mu_1)^{1/3}}\big)$, \eqref{oderes4} reduces to \eqref{pain}, 
that is, $y$ solves \eqref{pain} with $\alpha=\frac{a_0f_1(\xi)}{\sqrt{2}\mu_1}$.
Finally, we can see as in the previous proof that
the limit $\tilde V$ obtained in \eqref{oderes4} as well as the solution $y$ of \eqref{pain} are 
minimal in the sense of definition \eqref{minnn}.
\end{proof}

\begin{proof}[Theorem \ref{theorem 1} (iii)] 
For every $x_0\in\R^2$ such that $|x_0|>\rho$, we consider the rescaled minimizers
$\tilde v_{\epsilon,a}(s)= \frac{v_{\epsilon,a}(x_0+\epsilon s)}{\epsilon}$, with $s=(s_1,s_2)$, satisfying 
\begin{equation}\label{asd2}
\Delta \tilde v(s)+\mu(x_0+\epsilon s)\tilde v(s)-\epsilon^2\tilde v(s)^3+af_1(x_0+\epsilon s)=0 , \ \forall s \in \R^2.
\end{equation}
In view of the bound provided by Lemma \ref{s3gg} and \eqref{asd2}, we can see that the first derivatives of 
$\tilde v_{\epsilon,a}$ are uniformly bounded on compact sets for $\epsilon \ll 1$.
Moreover, by differentiating \eqref{asd2}, one can also obtain the boundedness of the second derivatives of $\tilde v$ on compact sets.
As a consequence, we conclude that $\lim_{\epsilon\to 0, a\to a_0}\tilde v_{\epsilon,a}(s)=\tilde V(s)$ in $C^2_{\mathrm{loc}}$, 
where $\tilde V(s)\equiv-\frac{a_0}{\mu(x_0)}f_1(x_0 )$ is the unique bounded solution of
\begin{equation}\label{asd3}
\Delta \tilde V(s)+\mu(x_0)\tilde V(s)+a_0f_1(x_0)=0 , \ \forall s \in \R^2.
\end{equation}
Indeed, consider a smooth and bounded solution $\phi:\R^2\to\R$ of $\Delta \phi=W'(\phi)$ where the potential $W:\R\to \R$ is smooth 
and strictly convex. Then, we have $\Delta(W(\phi))=|W'(\phi)|^2+W''(\phi)|\nabla \phi|^2\geq 0$, and since $W(\phi)$ is bounded we deduce 
that $W(\phi)$ is constant. Therefore, $\phi\equiv \phi_0$ where $\phi_0\in \R$ is such that $W'(\phi_0)=0$. 
To prove the uniform convergence $\frac{v_{\epsilon,a}(x)}{\epsilon}\to-\frac{a_0}{\mu(x)}f_1(x)$ on compact subsets of $\{|x|>\rho\}$, we proceed by contradiction.
Assuming that the uniform convergence does not hold, one can find a sequence $\epsilon_n\to 0$, 
a sequence $a_n\to a_0$, and a sequence $x_n \to x_0$, with $|x_0|>\rho$, such that
$\Big|\frac{v_{\epsilon_n,a_n}(x_n)}{\epsilon_n}+\frac{a_0}{\mu(x_n)}f_1(x_n)\Big|\geq \delta$, for some $\delta>0$. 
However, by reproducing the previous arguments,
it follows that the rescaled functions $\tilde v_{n}(s)= \frac{v_{\epsilon_n,a_n}(x_n+\epsilon_n s)}{\epsilon_n}$ converge 
in $C^2_{\mathrm{loc}}$ to the constant $\tilde V(s)\equiv-\frac{a_0}{\mu(x_0)}f_1(x_0)$. Thus, we have reached a contradiction.
\end{proof}

\begin{proof}[Proof of Theorem \ref{thcv} (i)] 
We first notice that $v \not\equiv 0$ for $\epsilon \ll 1$. 
Indeed, by choosing a test function $\psi\not\equiv 0$ supported in $D(0;\rho)$, and such that $\psi^2<2\mu$, one can see that 
\[
E(\psi)=\frac{\epsilon}{2}\int_{\R^2}|\nabla \psi|^2+\frac{1}{4\epsilon}\int_{\R^2}\psi^2(\psi^2-2\mu)<0, \qquad  \epsilon\ll 1.
\] 
Let $x_0\in \R^2$ be such that $v(x_0)\neq 0$. Without loss of generality we may assume that $v(x_0)>0$. Next, consider $\tilde v=|v|$ which is another global minimizer and thus another solution.
Clearly, in a neighborhood of $x_0$ we have $v=|v|$, and as a consequence of the unique continuation principle (cf. \cite{sanada})
we deduce that $v\equiv \tilde v\geq 0$ on $\R^2 $. Furthermore, 
the maximum principle implies that $v>0$, since $v\not\equiv 0$.
To prove that $v$ is radial we consider the reflection with respect to the line $x_1=0$.
We can check that $E(v,\{x_1>0\})=E(v,\{x_1<0\})$, since otherwise by even reflection we can construct a map in $H^1$ with energy smaller  than $v$. Thus, the map 
$\tilde v(x) = v(|x_1|,x_2)$ is also a minimizer, and since $\tilde v= v$ on $\{x_1>0\}$, it follows by unique continuation that 
$\tilde v\equiv v$ on $\R^2$. Repeating the same argument for any line of reflection, we deduce that $v$ is radial.
To complete the proof, it remains to show the uniqueness of $v$ up to change of $v$ by $-v$. Let $\tilde v$ be another global minimizer such that $\tilde v>0$, and $\tilde v\not\equiv v$. 
Choosing $\psi=u$ in \eqref{euler}, we find for any solution $u\in H^1(\R^2)$ of \eqref{ode} the following alternative expression of the energy:
\begin{equation}\label{enealt}
E(u)=-\int_{\R^2}  \frac{u^4}{4\epsilon}.
\end{equation}
Formula \eqref{enealt} implies that $v $ and $\tilde v$ intersect for $|x|=r>0$. However, setting
\begin{equation*}
w(x)=\begin{cases}
      v(x) &\text{ for } |x|\leq r\\
       \tilde v(x) &\text{ for } |x|\geq r,
     \end{cases}
 \end{equation*}
we can see that $w$ is another global minimizer, and again by the unique continuation principle we have $w\equiv v\equiv\tilde v$. 
\end{proof}

\begin{proof}[Proof of Theorem \ref{thcv} (ii), (iii)] 
We first establish two lemmas. 
\begin{lemma}\label{smooth}
Let $a>0$ and $\rho_0\in(0,\rho)$ be fixed, and set $l:=\frac{\sqrt{\mu_{\mathrm{rad}}(\rho_0)}}{2\mu(0)}$, $\lambda:=\frac{\sqrt{2} \tanh^{-1}(8/9)}{\sqrt{\mu_{\mathrm{rad}}(\rho_0)}}$, and $\lambda':=\frac{\mu_{\mathrm{rad}}(\rho_0) }{2\cosh^2(\lambda\sqrt{\mu(0)/2})}$.
Then, there exist $\epsilon_0>0$ such that
\begin{itemize}
\item[(i)] for every $\epsilon \in (0,\epsilon_0)$ 
the set $Z_{\epsilon}:=\{\bar x\in D(0;\rho_0): v_{\epsilon,a}(\bar x)=0\}$ is a smooth one 
dimensional manifold. Let $\nu(\bar x)$ 
be a unit normal vector 
at $\bar x\in Z_\epsilon$.
\item[(ii)] for every $\epsilon\in  (0,\epsilon_0)$, $\bar x\in Z_\epsilon$, and $|s|\leq l$, we have 
$|v(\bar x+\epsilon s)|\leq\frac{1}{2} \sqrt{\mu_{\mathrm{rad}}(\rho_0)}$.
\item[(iii)] for every $\epsilon \in (0,\epsilon_0)$, and $\bar x\in Z_\epsilon$, we have 
$|v(\bar x+\epsilon\lambda \nu)|\geq \frac{3}{4}\sqrt{\mu_{\mathrm{rad}}(\rho_0)}$,
\item[(iv)] for every $\epsilon \in (0,\epsilon_0)$, $\bar x\in Z_\epsilon$, and $t\in[-\lambda,\lambda]$ we have 
$\epsilon \big|\frac{\partial v}{\partial \nu}(\bar x+\epsilon t \nu)\big|\geq \lambda'$.
\end{itemize}
\end{lemma}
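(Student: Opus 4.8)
The plan is to derive all four statements from a single blow-up analysis at the zeros of $v_{\epsilon,a}$ inside $\overline{D(0;\rho_0)}$, taking $\epsilon_0$ to be the smallest of the four thresholds produced below. Fix $\bar x\in Z_\epsilon$ and set $\tilde v_\epsilon(s):=v_{\epsilon,a}(\bar x+\epsilon s)$, which solves
\[
\Delta\tilde v_\epsilon(s)+\mu(\bar x+\epsilon s)\tilde v_\epsilon(s)-\tilde v_\epsilon^3(s)+\epsilon a f_1(\bar x+\epsilon s)=0,\qquad s\in\R^2.
\]
By Lemma \ref{s3}, $v_{\epsilon,a}$ and $\epsilon\nabla v_{\epsilon,a}$ are uniformly bounded, so $\tilde v_\epsilon$, $\nabla\tilde v_\epsilon$ and (after differentiating the equation) $D^2\tilde v_\epsilon$ are bounded on compact sets, uniformly in $\epsilon$ and in $\bar x\in Z_\epsilon$. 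Hence, given any $\epsilon_n\to 0$ and $\bar x_n\in Z_{\epsilon_n}$, after passing to a subsequence we have $\bar x_n\to x_*\in\overline{D(0;\rho_0)}$ and $\tilde v_n:=\tilde v_{\epsilon_n}\to\tilde V$ in $C^2_{\mathrm{loc}}(\R^2)$, where $\tilde V$ is a bounded solution of $\Delta\tilde V+\mu(x_*)\tilde V-\tilde V^3=0$ with $\tilde V(0)=0$.

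The key step is to identify $\tilde V$. Exactly as in the proof of Theorem \ref{theorem 1} (i), local minimality passes to the limit, so $\tilde V$ minimizes $\psi\mapsto\int\big(\tfrac12|\nabla\psi|^2+\tfrac14(\psi^2-\mu(x_*))^2\big)$ against compactly supported perturbations. Since $\rho_0<\rho$ and $\mu_{\mathrm{rad}}$ is decreasing, $0<\mu_{\mathrm{rad}}(\rho_0)\le\mu(x_*)\le\mu_{\mathrm{rad}}(0)=\mu(0)$; in particular $\mu(x_*)>0$, so the function $\phi$ defined by $\tilde V(s)=\sqrt{\mu(x_*)}\,\phi\big(\sqrt{\mu(x_*)}\,s\big)$ is a minimal solution of the Allen--Cahn equation \eqref{pdeac} in $\R^2$. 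By \cite{savin}, $\phi\equiv\pm1$ or $\phi$ is one-dimensional; since $\tilde V(0)=0$ forces $\phi$ to vanish somewhere, $\phi$ must be one-dimensional, whence $\tilde V(s)=\sqrt{\mu(x_*)}\,\tanh\!\big(\sqrt{\mu(x_*)/2}\;s\cdot\nu_0\big)$ for some unit vector $\nu_0\in\R^2$ (the translation parameter vanishing because $\tilde V(0)=0$). Statement (i) then follows by contradiction: if $\nabla v_{\epsilon_n,a}(\bar x_n)=0$ along a blow-up sequence then $\nabla\tilde V(0)=0$, contradicting $\nabla\tilde V(0)=\tfrac{\mu(x_*)}{\sqrt 2}\,\nu_0\ne 0$; hence for small $\epsilon$ we have $\nabla v_{\epsilon,a}\ne 0$ on $Z_\epsilon$, so $Z_\epsilon$ is a smooth $1$-manifold by the implicit function theorem, with unit normal $\nu(\bar x)=\pm\nabla v_{\epsilon,a}(\bar x)/|\nabla v_{\epsilon,a}(\bar x)|$; moreover $\nu(\bar x_n)\to\pm\nu_0$ along any blow-up sequence, since $\nabla\tilde v_n(0)/|\nabla\tilde v_n(0)|\to\nu_0$.

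Statements (ii)--(iv) are obtained by the same scheme: if one of them fails, extract a blow-up sequence together with the offending point $s_n$ (resp.\ offending parameter $t_n$), pass to the $C^2_{\mathrm{loc}}$ limit, and obtain a strict violation of the corresponding estimate for the explicit $\tilde V$. Concretely, for (ii) one uses $\tanh r\le r$ together with $\mu(x_*)\le\mu(0)$ to get $|\tilde V(s)|\le\tfrac{\mu(x_*)}{\sqrt2}|s|\le\tfrac1{2\sqrt 2}\sqrt{\mu_{\mathrm{rad}}(\rho_0)}$ for $|s|\le l$; for (iii) one uses $\mu(x_*)\ge\mu_{\mathrm{rad}}(\rho_0)$ and the definition of $\lambda$ to get $|\tilde V(\pm\lambda\nu_0)|\ge\tfrac89\sqrt{\mu_{\mathrm{rad}}(\rho_0)}$, the convergence $\nu(\bar x_n)\to\pm\nu_0$ being used to identify the relevant evaluation point; and for (iv), since $\epsilon\,\partial v_{\epsilon,a}/\partial\nu(\bar x+\epsilon t\nu)=\nabla\tilde v_\epsilon(t\nu)\cdot\nu$, one uses $\mu_{\mathrm{rad}}(\rho_0)\le\mu(x_*)\le\mu(0)$ and the monotonicity of $\cosh$ to get, for $|t|\le\lambda$,
\[
\big|\nabla\tilde V(t\nu_0)\cdot\nu_0\big|=\frac{\mu(x_*)}{\sqrt 2}\,\sech^2\!\big(\sqrt{\mu(x_*)/2}\,t\big)\ge\frac{\mu_{\mathrm{rad}}(\rho_0)}{\sqrt 2\,\cosh^2\!\big(\lambda\sqrt{\mu(0)/2}\big)}>\lambda'.
\]
Since $\tfrac1{2\sqrt 2}<\tfrac12$, $\tfrac89>\tfrac34$ and $\tfrac1{\sqrt 2}>\tfrac12$, each bound has strict slack, which (being uniform over $x_*\in\overline{D(0;\rho_0)}$) is exactly what closes the contradiction arguments.

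The only genuinely delicate point is the identification of $\tilde V$ in the second paragraph: one must verify that minimality survives both the blow-up and the scaling normalization (argued as in the proof of Theorem \ref{theorem 1} (i)) and then invoke the classification of minimal solutions of the two-dimensional Allen--Cahn equation \cite{savin}. Everything else is bookkeeping, the constants $l,\lambda,\lambda'$ having been chosen precisely so that the target estimates hold with strict slack for the $\tanh$ profile, which is what makes the limiting inequalities close.
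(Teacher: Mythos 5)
Your proof is correct and follows essentially the same route as the paper: blow up at zeros of $v_{\epsilon,a}$, identify the $C^2_{\mathrm{loc}}$ limit as the one-dimensional profile $\sqrt{\mu(x_*)}\tanh\big(\sqrt{\mu(x_*)/2}\,s\cdot\nu_0\big)$ via minimality and the classification of minimal Allen--Cahn solutions, and then verify each quantitative bound with strict slack for this explicit profile using $\mu_{\mathrm{rad}}(\rho_0)\le\mu(x_*)\le\mu(0)$. The paper only writes out (i) and (ii) and declares (iii), (iv) ``similar''; your treatment of the normal direction $\nu(\bar x_n)\to\pm\nu_0$ and of the constants $\lambda,\lambda'$ supplies exactly the omitted details.
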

\begin{proof}
To prove (i) it is sufficient to establish that there exists $\epsilon_0>0$ such that for every $\epsilon \in (0,\epsilon_0)$ and 
$\bar x \in Z_\epsilon$, we have $\nabla v_{\epsilon,a}(\bar x)\neq 0$. Assuming by contradiction that this does not hold, 
we can find a sequence $\epsilon_n\to 0$, and a sequence
$ Z_{\epsilon_n}\ni \bar x_n \to x_0 \in \overline{D(0;\rho_0)}$ such that $\nabla v_{\epsilon_n,a}(\bar x_n)= 0$. 
However, by considering the rescaled functions 
$\tilde v_n(s)=v_{\epsilon_n,a}(\bar x_n+\epsilon_n s)$, it follows as in the proof of Theorem \ref{theorem 1} (i) that $\tilde v_n$ converges in $C^2_{\mathrm{ loc}}(\R^2)$ (up to a subsequence) to 
$\tilde V(s)=\sqrt{\mu(x_0)}\tanh(\sqrt{\mu(x_0)/2}(s\cdot \nu))$, where $\nu\in\R^2$ is a unit vector. Since $\nabla \tilde V(0)\neq 0$, we have reached a contradiction.
To prove (ii), we proceed again by contradiction, and assume that we can find a sequence $\epsilon_n\to 0$, a sequence
$ Z_{\epsilon_n}\ni \bar x_n \to x_0 \in \overline{D(0;\rho_0)}$, and a sequence $\overline{D(0;l)}\ni s_n\to s_0$ such that 
$|v(\bar x_n+\epsilon_n s_n)|> \sqrt{\mu_{\mathrm{rad}}(\rho_0)}/2$. As before, we obtain that $\tilde v_n(s)=v_{\epsilon_n,a}(\bar x_n+\epsilon_n s)$,
converges in $C^2_{\mathrm{ loc}}(\R^2)$ to 
$\tilde V(s)=\sqrt{\mu(x_0)}\tanh(\sqrt{\mu(x_0)/2}(s\cdot \nu))$. In particular, it follows that 
$$\lim_{n\to\infty}|v_{\epsilon_n,a}(\bar x_n+\epsilon_n s_n)|=\sqrt{\mu(x_0)}|\tanh(\sqrt{\mu(x_0)/2}(s_0\cdot \nu))|\leq \frac{\mu(0)l}{\sqrt{2}}
<\sqrt{\mu_{\mathrm{rad}}(\rho_0)}/2,$$
which is a contradiction. The proofs of (iii) and (iv) are similar.
\end{proof}

\begin{lemma}\label{astar}
Let
\begin{equation}\label{asbb}
 a_*:=\inf_{x_1\leq 0, |x|<\rho}\frac{\sqrt{2}(\mu(x))^{3/2}}{3\int_{-\sqrt{\rho^2-x_2^2}}^{x_1} |f_1(t,x_2)|\sqrt{\mu(t,x_2)} \dd t},
\end{equation}
and
\begin{equation}\label{ashh}
 a^*:=\sup_{x_1< 0, |x|\leq\rho} \frac{\sqrt{2}\big((\mu(0,x_2))^{3/2}-(\mu(x))^{3/2}\big)}{3\int_{x_1}^0|f_1(t,x_2)|\sqrt{\mu(t,x_2)}\dd t},
\end{equation}
then we have $a_*\in (0,\infty)$ and 
\begin{equation}\label{inegaaa}
 a_*\leq\frac{2\sqrt{2}\int_{|r|<\rho}(\mu_{\mathrm{rad}}(r))^{3/2}\dd r}{3\int_{D(0;\rho)}|f_1|\sqrt{\mu} }\leq a^*.
\end{equation}
Moreover, if $f'_{\mathrm{rad}}(0)> 0$, then $a^*<\infty$. 
Finally, if $f=-\frac{1}{2}\nabla\mu$, then $a_*=a^*=\sqrt{2}$.
\end{lemma}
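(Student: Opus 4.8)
\textbf{Proof proposal for Lemma \ref{astar}.}

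The plan is to analyze the two quantities \eqref{asbb} and \eqref{ashh} as functions of $(x_1,x_2)$ on the relevant portions of the disc, exploiting the hypotheses \eqref{hyp2} (in particular $\mu_{\mathrm{rad}}' < 0$, $f_{\mathrm{rad}} > 0$ on $(0,\infty)$) and the radial structure. First I would establish $a_* \in (0,\infty)$. For finiteness away from zero, fix the point where the defining ratio could degenerate: as $x_1 \to -\sqrt{\rho^2 - x_2^2}$ (the left endpoint of the chord), both numerator $(\mu(x))^{3/2}$ and the integral $\int_{-\sqrt{\rho^2-x_2^2}}^{x_1}|f_1(t,x_2)|\sqrt{\mu(t,x_2)}\,\dd t$ tend to $0$; a Taylor expansion of $\mu$ near the circle (using $\mu_1 = \mu_{\mathrm{rad}}'(\rho) < 0$) together with $f_1 \neq 0$ near the circle shows the numerator vanishes like a $3/2$ power of the distance to the boundary while the integral vanishes like a $3/2$ power as well, so the ratio stays bounded below by a positive constant near that edge; in the interior the numerator is bounded below and the denominator bounded above, so the infimum is positive. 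The upper bound $a_* < \infty$ comes from evaluating the ratio at any single admissible point, e.g. on the negative $x_1$-axis at a point strictly inside the disc. An entirely symmetric discussion, using that $\mu(0,x_2) > \mu(x)$ for $x_1 \neq 0$ inside the disc (monotonicity of $\mu_{\mathrm{rad}}$), gives that the supremum defining $a^*$ is over a set of positive numbers; its finiteness requires control as $x_1 \to 0^-$, which is where the hypothesis $f'_{\mathrm{rad}}(0) > 0$ enters: near $x_1 = 0$ we have $(\mu(0,x_2))^{3/2} - (\mu(x))^{3/2} = O(x_1^2)$ (since $\partial_{x_1}\mu$ vanishes on $x_1 = 0$ by radial symmetry), while $\int_{x_1}^0 |f_1(t,x_2)|\sqrt{\mu(t,x_2)}\,\dd t$ vanishes only like $x_1^2$ as well when $f'_{\mathrm{rad}}(0) \neq 0$ (because $f_1(t,x_2) \approx f'_{\mathrm{rad}}(0)\, t$ near the axis), so the ratio is bounded there; near the circle one argues as for $a_*$.

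Next I would prove the sandwich inequality \eqref{inegaaa}. The idea is that the middle quantity $\frac{2\sqrt{2}\int_{|r|<\rho}(\mu_{\mathrm{rad}}(r))^{3/2}\,\dd r}{3\int_{D(0;\rho)}|f_1|\sqrt{\mu}}$ should be read as a weighted average, over $x_2 \in (-\rho,\rho)$ and then over $x_1$, of the pointwise ratios appearing in \eqref{asbb} and \eqref{ashh}. Concretely, for fixed $x_2$, write $2\int_0^{\sqrt{\rho^2-x_2^2}}(\mu(t,x_2))^{3/2}\,\dd t$ (the $x_1$-slice of the numerator, using evenness of $\mu_{\mathrm{rad}}$) and compare it to $\int_{-\sqrt{\rho^2-x_2^2}}^{0}|f_1(t,x_2)|\sqrt{\mu(t,x_2)}\,\dd t$ times the relevant ratio; using the elementary fact that for positive functions $\frac{\int g}{\int h} \geq \inf \frac{g'}{h'}$-type bounds (more precisely, that an integral of a ratio-weighted quantity lies between the inf and the sup of the ratio) one pushes the inequality through. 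The left inequality $a_* \le (\text{middle})$ follows because $a_*$ is an infimum of slice-ratios and the middle term is an average of them; the right inequality $(\text{middle}) \le a^*$ follows symmetrically, after writing $2\int_0^{\sqrt{\rho^2-x_2^2}}(\mu(t,x_2))^{3/2}\dd t$ as a telescoping sum $\int_{-R}^0 \frac{\dd}{\dd t}\big[(\mu(0,x_2))^{3/2} - (\mu(t,x_2))^{3/2}\big]$-type identity so that it matches the numerator of \eqref{ashh} integrated against $|f_1|\sqrt{\mu}$. I expect this bookkeeping — correctly matching the slice integrals in the numerators and denominators, and handling the $x_2$-integration via Fubini — to be the main technical point, though it is routine once the weighting is identified.

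Finally, for the case $f = -\tfrac12 \nabla\mu$, which means $f_1 = -\tfrac12 \partial_{x_1}\mu$ and $f_{\mathrm{rad}}(r) = -\tfrac12 \mu_{\mathrm{rad}}'(r)$, the integrals become exactly computable. On a horizontal slice, $f_1(t,x_2)\sqrt{\mu(t,x_2)} = -\tfrac12 \partial_{x_1}\mu(t,x_2)\sqrt{\mu(t,x_2)}$, so $\int |f_1|\sqrt{\mu}\,\dd t$ is, up to sign, $\pm\tfrac13 (\mu)^{3/2}$ evaluated at the endpoints (antiderivative $\tfrac13 \mu^{3/2}$). For $a_*$: the denominator $3\int_{-\sqrt{\rho^2-x_2^2}}^{x_1}|f_1|\sqrt{\mu}\,\dd t = (\mu(x))^{3/2}$ exactly (since $\mu$ vanishes at the left endpoint and $\partial_{x_1}\mu > 0$ there for $x_1<0$), so the ratio in \eqref{asbb} is identically $\sqrt{2}$, giving $a_* = \sqrt 2$. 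For $a^*$: the denominator $3\int_{x_1}^0 |f_1|\sqrt{\mu}\,\dd t = (\mu(0,x_2))^{3/2} - (\mu(x))^{3/2}$, so the ratio in \eqref{ashh} is also identically $\sqrt 2$, giving $a^* = \sqrt 2$. Hence $a_* = a^* = \sqrt 2$, and the earlier finiteness discussion is consistent since $f'_{\mathrm{rad}}(0) = -\tfrac12 \mu_{\mathrm{rad}}''(0) \geq 0$ with the required strict positivity whenever $\mu_{\mathrm{rad}}'' (0) < 0$.
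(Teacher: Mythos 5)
Your handling of $a_*\in(0,\infty)$, of the finiteness of $a^*$ under $f_{\mathrm{rad}}'(0)>0$, and of the exactly solvable case $f=-\frac12\nabla\mu$ is essentially sound, and the last of these matches the paper's computation verbatim. For the first two the paper proceeds differently — it introduces auxiliary functions $\beta_*(x)=\frac{\sqrt2}{3}\mu(x)^{3/2}-a\int_{-\sqrt{\rho^2-x_2^2}}^{x_1}|f_1|\sqrt{\mu}\,\dd t$ and the analogous $\beta^*$, and checks the sign of $\partial\beta_*/\partial x_1$ so that the required inequality propagates from the boundary circle (resp.\ from $x_1=0$) inward. That device automatically handles the points near $(0,\pm\rho)$ where $|f_1(x)|=f_{\mathrm{rad}}(r)|x_1|/r$ degenerates; your statement ``$f_1\neq 0$ near the circle'' glosses over exactly this region, although your asymptotics do survive there because the factor $|\cos\theta|$ lost from $|f_1|$ is regained from the slower decay of $\mu$ along the horizontal chord.

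The genuine gap is in the sandwich inequality \eqref{inegaaa}. You read the numerator $\int_{|r|<\rho}(\mu_{\mathrm{rad}}(r))^{3/2}\,\dd r$ of the middle term as a two-dimensional integral over the disc and slice it as $2\int_0^{\sqrt{\rho^2-x_2^2}}(\mu(t,x_2))^{3/2}\dd t$; but it is a one-dimensional integral over a diameter, equal to $\int_{-\rho}^{\rho}(\mu(0,x_2))^{3/2}\dd x_2$. With your slicing, the ``weighted average'' you build is $\int_{D(0;\rho)}\mu^{3/2}\big/\int_{D(0;\rho)}|f_1|\sqrt{\mu}$, which is not the quantity in \eqref{inegaaa}, and the telescoping identity you invoke for the right-hand inequality has no counterpart. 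The mechanism that actually works (and is the paper's one-line argument) is: for each fixed $x_2\in(-\rho,\rho)$ set
\[
Q(x_2):=\frac{2\sqrt2\,(\mu(0,x_2))^{3/2}}{3\int_{|t|<\sqrt{\rho^2-x_2^2}}|f_1(t,x_2)|\sqrt{\mu(t,x_2)}\,\dd t}.
\]
Since $t\mapsto|f_1(t,x_2)|\sqrt{\mu(t,x_2)}$ is even, the half-chord integral is half of the full one, so $Q(x_2)$ is exactly the ratio in \eqref{asbb} evaluated at $x_1=0$, whence $Q(x_2)\geq a_*$; and it is the value (or limit) of the ratio in \eqref{ashh} as $x_1\to-\sqrt{\rho^2-x_2^2}$, whence $Q(x_2)\leq a^*$. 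Writing $Q=P/D$ with $P,D>0$, the pointwise bounds $a_*D(x_2)\leq P(x_2)\leq a^*D(x_2)$ integrate over $x_2\in(-\rho,\rho)$ to give $a_*\leq\int P/\int D\leq a^*$, and by Fubini $\int_{-\rho}^{\rho}P=2\sqrt2\int_{|r|<\rho}\mu_{\mathrm{rad}}^{3/2}$ while $\int_{-\rho}^{\rho}D=3\int_{D(0;\rho)}|f_1|\sqrt{\mu}$, which is \eqref{inegaaa}. Without identifying $Q(x_2)$ as a common value of the two defining ratios, your averaging argument does not produce the stated chain.
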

\begin{proof}
We first check that $a_*\in (0,\infty)$ and $a^*\in [a_*,\infty]$. Let us define the auxilliary function
$$\{x\in\R^2: x_1\leq 0, |x|\leq\rho\}\ni x\to \beta_*(x)=\frac{\sqrt{2}}{3}(\mu(x))^{3/2}-a\int_{-\sqrt{\rho^2-x_2^2}}^{x_1} |f_1(t,x_2)|\sqrt{\mu(t,x_2)} \dd t,$$
and compute $\frac{\partial\beta_*}{\partial x_1}(x)=\big(\frac{\sqrt{2}}{2}\mu'_{\mathrm{rad}}(r)-af_{\mathrm{rad}}(r)\big) \sqrt{\mu(x)}\cos \theta$, where $x=(r\cos\theta,r\sin\theta)$. 
It is clear that for sufficiently small $a_1>0$ and $\gamma>0$, we have $\frac{\partial\beta_*}{\partial x_1}(x)> 0$ provided that $x_1<0$, 
$\rho-\gamma<|x|<\rho$, and $a\leq a_1$. Since $\beta_*(x)=0$ for $|x|=\rho$, it follows that 
$\beta_*(x)\geq 0$ provided that $x_1\leq 0$, $\rho-\gamma\leq|x|\leq\rho$, and $a\leq a_1$. 
There also exists $a_2>0$ such that for $a\leq a_2$, we have $\beta_* \geq 0$ on the set $\{x_1\leq 0, |x|\leq\rho-\gamma\}$. 
Thus, we can see that $a_*\geq\min(a_1,a_2)>0$. Furthermore, since the inequalities 
$a_*\leq\frac{2\sqrt{2}(\mu(0,x_2))^{3/2}}{3\int_{|t|<\sqrt{\rho^2-x_2^2}}|f_1(t,x_2)|\sqrt{\mu(t,x_2)}\dd t }\leq a^*$ hold for every 
$x_2\in (-\rho,\rho)$, we obtain after an integration \eqref{inegaaa}.
Next, we define a second auxilliary function
$$\{x\in\R^2: x_1\leq 0, |x|\leq\rho\}\ni x\to \beta^*(x)=\frac{\sqrt{2}}{3}[(\mu(0,x_2))^{3/2}-(\mu(x))^{3/2}]-a\int_{x_1}^{0} |f_1(t,x_2)|\sqrt{\mu(t,x_2)} \dd t,$$
and compute $\frac{\partial\beta^*}{\partial x_1}(x)=\big(\frac{\sqrt{2}}{2}\mu'_{\mathrm{rad}}(r)+af_{\mathrm{rad}}(r)\big) \sqrt{\mu(x)}|\cos \theta|$, where $x=(r\cos\theta,r\sin\theta)$. 
Since $f'_{\mathrm{rad}}(0)> 0$, one can see that $\frac{\sqrt{2}}{2}\mu''_{\mathrm{rad}}(r)+af'_{\mathrm{rad}}(r)> 0$,
provided that $r\in[0,\gamma]$ and $a\geq a_3$, with $\gamma>0$ sufficiently small, and $a_3>0$ sufficiently big. Thus, 
$\frac{\sqrt{2}}{2}\mu'_{\mathrm{rad}}(r)+af_{\mathrm{rad}}(r)\geq 0$, and $\frac{\partial\beta^*}{\partial x_1}(x)> 0$, when $r=|x|\leq\gamma$,
$x_1<0$, and $a\geq a_3$.
On the other hand it is clear that for sufficiently big $a_4>0$, we have $\frac{\partial\beta^*}{\partial x_1}(x)> 0$ provided that $x_1<0$, 
$\gamma\leq |x|<\rho$, and $a\geq a_4$. Since $\beta^*(x)=0$ for $x_1=0$, it follows that 
$\beta^*(x)\leq 0$ provided that $x_1\leq 0$, $|x|\leq\rho$, and $a\geq \max(a_3,a_4)$. This proves that $a^*\leq \max(a_3,a_4)$.
Finally, one can check that $a_*=a^*=\sqrt{2}$ when $f=-\frac{1}{2}\nabla\mu \Rightarrow f_1=-\frac{1}{2}\frac{\partial\mu}{\partial x_1}$, by 
computing the integrals appearing in the denominators of \eqref{asbb}, \eqref{ashh}.
\end{proof}

The minimum of the energy defined in \eqref{funct 0} is nonpositive and tends to $-\infty$ as $\epsilon \to 0$. Since we are interested in the behavior of the minimizers as $\epsilon \to 0$, it is useful to define a renormalized energy, which is obtained by adding to \eqref{funct 0} a suitable term so that the result is tightly bounded from above. We define the renormalized energy as 
\begin{equation}\label{renorm}
\mathcal{E}(u):=E(u)+\int_{|x|<\rho}\frac{\mu^2}{4\epsilon}=\int_{\R^2}\frac{\epsilon}{2}|\nabla u|^2+\int_{|x|<\rho}
\frac{(u^2-\mu)^2}{4\epsilon}+\int_{|x|>\rho}\frac{u^2(u^2-2\mu)}{4\epsilon}- a \int_{\R^2}f_1 u ,
\end{equation}  
and claim the bound:
\begin{lemma}\label{bbnm}
\begin{equation}\label{quaq1}
\limsup_{\epsilon\to 0}\mathcal{E}_{\epsilon,a}(v_{\epsilon,a})\leq\min\Big(0,\frac{2\sqrt{2}}{3}\int_{-\rho}^\rho (\mu_{\mathrm{rad}}(r))^{3/2}\dd r
-a\int_{D(0;\rho)}|f_1|\sqrt{\mu}\Big),
 \text{ for arbitrary fixed $a$}.
\end{equation}
\end{lemma}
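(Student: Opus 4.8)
\emph{Strategy.} The identity \eqref{quaq1} is the ``recovery sequence'' half of a $\Gamma$-convergence statement, so the plan is to produce good competitors. Since $v_{\epsilon,a}$ minimizes $E$ on $H^1(\R^2)$ and, by \eqref{renorm}, $\mathcal E_{\epsilon,a}$ differs from $E$ only by the fixed quantity $\int_{|x|<\rho}\mu^2/(4\epsilon)$, we have $\mathcal E_{\epsilon,a}(v_{\epsilon,a})\le\mathcal E_{\epsilon,a}(\psi)$ for every $\psi\in H^1(\R^2)$. It therefore suffices to construct, for each of the two numbers in the minimum on the right of \eqref{quaq1}, a family $(\psi_\epsilon)_{\epsilon>0}\subset H^1(\R^2)$ with $\limsup_{\epsilon\to0}\mathcal E_{\epsilon,a}(\psi_\epsilon)$ bounded by that number; then \eqref{quaq1} follows.

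\emph{The competitor for the bound by $0$.} I would take $\psi_\epsilon^{(1)}$ to be a radial regularization of the Thomas--Fermi profile $\sqrt{\mu^+}$: put $\psi_\epsilon^{(1)}(x)=\sqrt{\mu_{\mathrm{rad}}(|x|)}$ for $|x|\le\rho-\epsilon^{2/3}$, let it interpolate linearly in $|x|$ from $\sqrt{\mu_{\mathrm{rad}}(\rho-\epsilon^{2/3})}$ down to $0$ on $\{\rho-\epsilon^{2/3}\le|x|\le\rho\}$, and set it $\equiv 0$ for $|x|\ge\rho$; this is a bounded, compactly supported $H^1$ function. In \eqref{renorm} the term over $\{|x|>\rho\}$ then vanishes identically; the term $\int_{|x|<\rho}((\psi_\epsilon^{(1)})^2-\mu)^2/(4\epsilon)$ is concentrated in the annulus $\{\rho-\epsilon^{2/3}\le|x|\le\rho\}$, where both $(\psi_\epsilon^{(1)})^2$ and $\mu$ are $O(\epsilon^{2/3})$, so it is $O(\epsilon)$; and the Dirichlet term is $O(\epsilon\ln(1/\epsilon))$, because $\int_{|x|\le\rho-\epsilon^{2/3}}\frac{(\mu'_{\mathrm{rad}})^2}{4\mu_{\mathrm{rad}}}=O(\ln(1/\epsilon))$ (as $\mu_{\mathrm{rad}}(r)\sim\mu_1(r-\rho)$ near $\rho$ by \eqref{hyp2}) while on the annulus $|\nabla\psi_\epsilon^{(1)}|^2=O(\epsilon^{-2/3})$ on a set of area $O(\epsilon^{2/3})$. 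Finally $\int_{\R^2}f_1\psi_\epsilon^{(1)}=0$ for every $\epsilon$, because $\psi_\epsilon^{(1)}$ is radial hence even in $x_1$, whereas $f_1(x)=f_{\mathrm{rad}}(|x|)x_1/|x|$ is odd in $x_1$. Hence $\mathcal E_{\epsilon,a}(\psi_\epsilon^{(1)})\to0$.

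\emph{The competitor for the Modica--Mortola value.} Here I would set $\psi_\epsilon^{(2)}$ equal to $\sgn(x_1)\sqrt{\mu^+(x)}$ outside a thin tube $T_\epsilon:=\{|x_1|<\epsilon M_\epsilon\}\cap D(0;\rho)$ around the segment $\Gamma:=\{x_1=0,\ |x_2|<\rho\}$, with $M_\epsilon\to\infty$ and $\epsilon M_\epsilon\to0$ (e.g. $M_\epsilon\sim\ln(1/\epsilon)$, so that $e^{-cM_\epsilon}=o(\epsilon M_\epsilon)$), and inside $T_\epsilon$ equal to the one dimensional transition profile $\eta_m(x_1/\epsilon)$ with $\eta_m(t)=\sqrt m\tanh(\sqrt{m/2}\,t)$ and $m=\mu(0,x_2)$ --- the very profile appearing in \eqref{cvn00} --- patched near $|x|=\rho$ to the radial cut-off of the previous paragraph and joined to the outer piece through a correction layer whose $H^1$-mismatch, of size $O(\epsilon M_\epsilon+e^{-cM_\epsilon})$, costs only $o(1)$ in energy. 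Away from $T_\epsilon$ the estimate of the previous paragraph applies and gives $o(1)$. Inside $T_\epsilon$, since $\partial_{x_2}\psi_\epsilon^{(2)}=O(1)$ and $\mu(x_1,x_2)=m+O(\epsilon M_\epsilon)$ there, the change of variables $t=x_1/\epsilon$ turns the relevant part of \eqref{renorm} into
\[
\int_{-\rho}^{\rho}\Big(\int_{-M_\epsilon}^{M_\epsilon}\big[\tfrac12|\eta_m'(t)|^2+\tfrac14(\eta_m(t)^2-m)^2\big]\,\dd t\Big)\,\dd x_2+o(1),
\]
and the first integral $\tfrac12\eta_m'^2=\tfrac14(\eta_m^2-m)^2$ of the equation $\eta_m''=\eta_m^3-m\eta_m$ makes the inner integral tend, as $M_\epsilon\to\infty$, to $\int_{\R}\eta_m'^2\,\dd t=\tfrac1{\sqrt2}\int_{-\sqrt m}^{\sqrt m}(m-q^2)\,\dd q=\tfrac{2\sqrt2}{3}m^{3/2}$. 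Thus the tube contributes $\tfrac{2\sqrt2}{3}\int_{-\rho}^{\rho}(\mu(0,x_2))^{3/2}\,\dd x_2=\tfrac{2\sqrt2}{3}\int_{-\rho}^{\rho}(\mu_{\mathrm{rad}}(r))^{3/2}\,\dd r$ in the limit, using that $\mu_{\mathrm{rad}}$ is even. Finally $\psi_\epsilon^{(2)}\to\sgn(x_1)\sqrt{\mu^+}$ pointwise a.e. and boundedly, so by dominated convergence (with dominating function $\|\sqrt{\mu^+}\|_\infty|f_1|\in L^1$) $-a\int_{\R^2}f_1\psi_\epsilon^{(2)}\to-a\int_{D(0;\rho)}f_1\sgn(x_1)\sqrt\mu=-a\int_{D(0;\rho)}|f_1|\sqrt\mu$, the last step because $f_{\mathrm{rad}}>0$ gives $f_1\sgn(x_1)=|f_1|$. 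Collecting terms, $\mathcal E_{\epsilon,a}(\psi_\epsilon^{(2)})\to\tfrac{2\sqrt2}{3}\int_{-\rho}^{\rho}(\mu_{\mathrm{rad}}(r))^{3/2}\,\dd r-a\int_{D(0;\rho)}|f_1|\sqrt\mu$, and minimizing over the two constructions gives \eqref{quaq1}.

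\emph{Main obstacle.} The delicate point is the bookkeeping for $\psi_\epsilon^{(2)}$: making it an admissible $H^1$ competitor by joining the inner $\tanh$ layer to the outer $\pm\sqrt{\mu^+}$ across $\partial T_\epsilon$ at only $o(1)$ energy cost; controlling the slow variation along $\Gamma$ of the surface tension $\tfrac{2\sqrt2}{3}(\mu(0,x_2))^{3/2}$; and handling the endpoints $|x_2|\to\rho$, where $\mu(0,x_2)\to0$ and the layer width $\sim(\mu(0,x_2))^{-1/2}$ degenerates --- this can be absorbed by first restricting the tube to $|x_2|\le\rho-\delta$, covering $\rho-\delta<|x_2|<\rho$ by the radial cut-off at a cost $O(\delta)$, and letting $\delta\to0$ after $\epsilon\to0$ --- together with the corners where $\Gamma$ meets $\{|x|=\rho\}$. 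These are the standard estimates for a Modica--Mortola recovery sequence, but they must be organized so that every error term is $o(1)$.
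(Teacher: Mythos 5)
Your proposal is correct and follows essentially the same route as the paper: the upper bound by $0$ comes from the radial regularization of $\sqrt{\mu^+}$ (whose $f_1$-term vanishes by oddness), and the second bound from a Modica--Mortola competitor with a $\tanh$ layer of profile $\sqrt{\mu(0,x_2)}\tanh(x_1\sqrt{\mu(0,x_2)/2}/\epsilon)$ in a thin tube around $\{x_1=0\}$, matched to $\pm\sqrt{\mu^+}$ outside. The only differences are cosmetic: the paper takes tube half-width $\epsilon^{1-\beta}$ with $\beta\in(\frac13,\frac49)$ and enforces continuity by the normalization factor $l_\epsilon(x_2)$ rather than by a separate correction layer.
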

\begin{proof}
Let us consider the $C^1$ piecewise function:
\begin{equation}
\psi_\epsilon(x)=
\begin{cases}
\sqrt{\mu(x)} &\text{for } |x|\leq \rho-\epsilon^{2/3} \\
k_\epsilon \epsilon^{-1/3}(\rho-|x|)  &\text{for }\rho-\epsilon^{2/3} \leq|x|\leq \rho\\
0  &\text{for } |x|\geq \rho
\end{cases}, \nonumber
\end{equation} 
with $k_\ve$ defined by $k_\epsilon \epsilon^{1/3}=\sqrt{\mu_{\mathrm{rad}}(\rho- \epsilon^{2/3})}\Longrightarrow k_\epsilon=|\mu'_{\mathrm{rad}} (\rho)|^{\frac{1}{2}}+o(1)$.
Since $\psi \in H^1(\R^2)$, it is clear that $\mathcal E(v)\leq \mathcal E(\psi)$. 
We check that $\mathcal E(\psi)=\frac{\pi|\mu_1|\rho}{6}|\epsilon\ln\epsilon|+\mathcal O(\epsilon)$, since it is the sum of the following integrals:
$$\int_{\rho-\epsilon^{2/3}<|x|<\rho}\frac{(\psi^2-\mu)^2}{4\epsilon}=\mathcal O(\epsilon),\ \int_{|x|>\rho-\epsilon^{2/3}}\frac{\epsilon}{2} |\nabla \psi|^2=\mathcal O(\epsilon),$$
$$\int_{|x|\leq\rho-\epsilon^{2/3}}\frac{\epsilon}{2}\frac{ |\mu'_{\mathrm{rad}}(|x|)|^2}{4\mu}=\frac{\epsilon|\mu_1|}{8}
\int_{|x|\leq\rho-\epsilon^{2/3}}\frac{1}{\rho-|x|}+\mathcal O(\epsilon)=\frac{\pi|\mu_1|\rho}{6}|\epsilon\ln\epsilon|+\mathcal O(\epsilon).$$
Thus, $\limsup_{\epsilon\to 0}\mathcal{E}_{\epsilon,a}(v_{\epsilon,a})\leq \limsup_{\epsilon\to 0}\mathcal{E}_{\epsilon,a}(\psi_\epsilon)=0$.

Next, we set $\zeta_\epsilon:= \epsilon^{-\beta}$, with $\beta\in (\frac{1}{3},\frac{4}{9})$, and define the $C^1$ piecewise functions:
\begin{equation}
\l_\epsilon(x_2)=
\begin{cases}
\frac{\psi_\epsilon (\epsilon\zeta_\epsilon,x_2)}{\tanh \big( \zeta_\epsilon\frac{\psi_\epsilon(0,x_2)}{\sqrt{2}}\big)}
&\text{for } |x_2|\leq (\rho^2-\epsilon^2\zeta_\epsilon^2)^{1/2}
\\
0 &\text{for } |x_2|\geq (\rho^2-\epsilon^2\zeta_\epsilon^2)^{1/2}
\end{cases}, \nonumber
\end{equation} 
and
\begin{equation}
\chi_\epsilon(x)=
\begin{cases}
l_\epsilon (x_2)\tanh\big(\frac{x_1\psi_\epsilon(0,x_2)}{\sqrt{2}\epsilon}\big) &\text{for } |x_1|\leq \epsilon \zeta_\epsilon
\\
\psi_\epsilon (x)  &\text{for } x_1\geq \epsilon\zeta_\epsilon\\
-\psi_\epsilon (x)  &\text{for } x_1\leq -\epsilon\zeta_\epsilon
\end{cases}. \nonumber
\end{equation} 
We also consider the sets $$D_\epsilon^1:=\{(x_1,x_2): |x_1|\leq \epsilon \zeta_\epsilon,  |x_2|\leq \big((\rho^2-\epsilon^{2/3})^2-\epsilon^2\zeta_\epsilon^2\big)^{1/2}\},$$
$$D_\epsilon^2:=\{(x_1,x_2): |x_1|\leq \epsilon \zeta_\epsilon,  |x_2|\geq  \big((\rho^2-\epsilon^{2/3})^2-\epsilon^2\zeta_\epsilon^2\big)^{1/2}, |x|\leq \rho\},$$
and
$$D_\epsilon^3:=\{(x_1,x_2): |x_1|\geq \epsilon \zeta_\epsilon,  |x|\leq \rho\}.$$
One the one hand, it is clear that $$\lim_{\epsilon\to0}-a\int_{\R^2}f_1\chi_\epsilon=-a\int_{D(0;\rho)}|f_1|\sqrt{\mu},$$ 
and
$$\lim_{\epsilon\to0}\int_{D_\epsilon^3}\big(\frac{\epsilon}{2}|\nabla \chi_\epsilon|^2+\frac{(\chi_\epsilon^2-\mu)^2}{4\epsilon}\big)=0.$$
In addition, it is a simple calculation to verify that
$$\lim_{\epsilon\to0}\int_{D_\epsilon^2}\big(\frac{\epsilon}{2}|\nabla \chi_\epsilon|^2+\frac{(\chi_\epsilon^2-\mu)^2}{4\epsilon}\big)=0.$$
On the other hand when $|x_2|\leq \big((\rho^2-\epsilon^{2/3})^2-\epsilon^2\zeta_\epsilon^2\big)^{1/2}=:\tau_\epsilon$, we have
$l_\epsilon^2(x_2)=\mu(0,x_2)+\mathcal O(\epsilon^2\zeta_\epsilon^2)$, uniformly in $x_2$.
Our claim is that
\begin{equation}\label{quabb}
\lim_{\epsilon\to0}\int_{D_\epsilon^3}\big(\frac{\epsilon}{2}|\nabla \chi_\epsilon|^2+\frac{(\chi_\epsilon^2-\mu)^2}{4\epsilon}\big)=\frac{2\sqrt{2}}{3}\int_{-\rho}^\rho (\mu(0,x_2))^{3/2}\dd x_2.
\end{equation}
Indeed, setting $\tilde \chi(x_1,x_2)=\sqrt{\mu(0,x_2)}\tanh\Big(x_1\sqrt{\frac{\mu(0,x_2)}{2}}\Big)$, we can see that
$\int_{D_\epsilon^3}\big(\frac{\epsilon}{2}|\nabla \chi_\epsilon|^2+\frac{(\chi_\epsilon^2-\mu)^2}{4\epsilon}\big)$ is the sum of the following integrals:
$$\int_{D_\epsilon^1}\frac{\mu^2}{4\epsilon}=\int_{D_1^\epsilon}\frac{\mu^2(0,x_2)}{4\epsilon}+\mathcal O(\zeta_\epsilon^2\epsilon), $$
$$\int_{D^1_\epsilon}\frac{\epsilon}{2}\Big|\frac{\partial\chi_\epsilon}{\partial x_1}\Big|^2=\int_{|x_1|<\zeta_\epsilon, |x_2|<\tau_\epsilon}\frac{1}{2}\frac{l_\epsilon^2(x_2)}{\mu(0,x_2)}\Big|\frac{\partial\tilde \chi}{\partial x_1}\Big|^2 
=\int_{|x_1|<\zeta_\epsilon, |x_2|<\tau_\epsilon}\frac{1}{2}\Big|\frac{\partial\tilde \chi}{\partial x_1}\Big|^2 +\mathcal O( \epsilon^{4/3}\zeta_\epsilon^2),$$ 
$$\int_{D^1_\epsilon}\frac{\epsilon}{2}\Big|\frac{\partial\chi_\epsilon}{\partial x_2}\Big|^2=\mathcal O( \epsilon^{4/3}\zeta_\epsilon^3),$$
\begin{multline*}
-\int_{D^1_\epsilon}\frac{\mu}{2\epsilon}\chi^2_\epsilon=-\int_{D^1_\epsilon}\frac{\mu(0,x_2)}{2\epsilon}\chi^2_\epsilon+\mathcal O(\epsilon\zeta_\epsilon^2)=-\int_{|x_1|<\zeta_\epsilon, |x_2|<\tau_\epsilon }\frac{l_\epsilon^2(x_2)}{\mu(0,x_2)}\frac{\mu(0,x_2)\tilde \chi^2}{2}+\mathcal O(\epsilon\zeta_\epsilon^2)\\=-\int_{|x_1|<\zeta_\epsilon, |x_2|<\tau_\epsilon }\frac{\mu(0,x_2)\tilde \chi^2}{2}+\mathcal O( \epsilon^{4/3}\zeta_\epsilon^3),
\end{multline*}
$$\int_{D^1_\epsilon }\frac{\chi_\epsilon^4}{4\epsilon}=\int_{|x_1|<\zeta_\epsilon, |x_2|<\tau_\epsilon }\frac{l_\epsilon^4(x_2)}{(\mu(0,x_2))^2}\frac{\tilde \chi^4}{4}=\int_{|x_1|<\zeta_\epsilon ,|x_2|<\tau_\epsilon}\frac{\tilde \chi^4}{4}+\mathcal O( \epsilon^{4/3}\zeta_\epsilon^3).$$
Gathering the previous results,
it follows that 
$$\lim_{\epsilon\to0}\int_{D_\epsilon^3}\big(\frac{\epsilon}{2}|\nabla \chi_\epsilon|^2+\frac{(\chi_\epsilon^2-\mu)^2}{4\epsilon}\big)=\int_{-\rho}^\rho\int_{\R}\Big(\frac{1}{2}\Big|\frac{\partial\tilde \chi}{\partial x_1}\Big|^2+\frac{(\tilde \chi^2-\mu(0,x_2))^2}{4}\Big)\dd x_1\dd x_2=\frac{2\sqrt{2}}{3}\int_{-\rho}^\rho (\mu(0,x_2))^{3/2}\dd x_2.$$
Finally, in view of what precedes we deduce that $$\limsup_{\epsilon\to 0}\mathcal{E}_{\epsilon,a}(v_{\epsilon,a})\leq\lim_{\epsilon\to 0}\mathcal E_{\epsilon,a}(\chi_\epsilon)=\frac{2\sqrt{2}}{3}\int_{-\rho}^\rho (\mu_{\mathrm{rad}}(r))^{3/2}\dd r -a\int_{D(0;\rho)}|f_1|\sqrt{\mu}.$$
 \end{proof}
At this stage, we are going to compute a lower bound of $\mathcal{E}_{\epsilon,a}(v_{\epsilon,a})$ (cf. \eqref{fatt}). This computation reduces to the one 
dimensional problem studied in \cite{panayotis_1}. For every $x_2\in (-\rho,\rho)$ fixed, we consider the restriction of the energy to the line $\{(t,x_2): t\in \R\}$:
\begin{equation}\label{funct 00}
E^{x_2}(\phi)=\int_{\R}\left(\frac{\epsilon}{2}|\phi'(t)|^2-\frac{1}{2\epsilon}\mu(t,x_2)\phi^2(t)+\frac{1}{4\epsilon}|\phi(t)|^4-a f_1(t,x_2)\phi(t)\right)\dd t, \ \phi \in H^1(\R).
\end{equation}
We recall (cf. \cite{panayotis_1}) that there exists $\psi^{x_2}_{\epsilon,a} \in H^1(\R)$ such that 
$E^{x_2}(\psi^{x_2}_{\epsilon,a})=\min_{H^1(\R)} E^{x_2}$, and moreover setting
$$\mathcal{E}^{x_2}(\phi):=E^{x_2}(\phi)+\int_{|t|<\sqrt{\rho^2-x_2^2}}\frac{\mu^2(t,x_2)}{4\epsilon}\dd t,$$
$$a_*(x_2):=\inf_{t \in (-\sqrt{\rho^2-x_2^2},0]}\frac{\sqrt{2}(\mu(t,x_2))^{3/2}}{3\int_{-\sqrt{\rho^2-x_2^2}}^t 
|f_1(t,x_2)|\sqrt{\mu(t,x_2)} },$$ 
and
$$a^*(x_2):=\sup_{t\in[-\sqrt{\rho^2-x_2^2},0)} 
\frac{\sqrt{2}\big((\mu(0,x_2))^{3/2}-(\mu(t,x_2))^{3/2}\big)}{3\int_t^0|f_1(t,x_2)|\sqrt{\mu(t,x_2)}},$$
we have
\begin{equation}\label{recal1}
 \lim_{\epsilon\to 0} \mathcal{E}^{x_2}_{\epsilon,a}(\psi^{x_2}_{\epsilon,a})=0, \forall x_2\in (-\rho,\rho), \forall a\in(0, a_*(x_2)),
\end{equation}
and
\begin{equation}\label{recal2}
\lim_{\epsilon\to 0} \mathcal{E}^{x_2}_{\epsilon,a}(\psi^{x_2}_{\epsilon,a})=\frac{2\sqrt{2}}{3}(\mu(0,x_2))^{3/2}-a
\int_{|t|<\sqrt{\rho^2-x_2^2}} |f_1(t,x_2)| \sqrt{\mu(t,x_2)}\dd t, \forall x_2\in (-\rho,\rho), \forall a\in(a^*(x_2),\infty). 
\end{equation}
Also note that $0<a_*=\inf_{x_2\in (-\rho,\rho)}a_*(x_2)\leq a_*(x_2)\leq a^*(x_2)\leq a^*=\sup_{x_2\in (-\rho,\rho)} a^*(x_2)$, for every $x_2\in (-\rho,\rho)$.
In view of these results we claim:

\begin{lemma}\label{bbnmbispr}We have
\begin{equation}\label{fattlem1}
\lim_{\epsilon\to 0}\int_{\R^2}\epsilon\Big| \frac{\partial v_{\epsilon,a}}{\partial x_2}\Big|^2=0 \text{ when } a\in (0,a_*)\cup (a^*,\infty), 
 \end{equation}
\begin{equation}\label{fatnew1}
\lim_{\epsilon\to 0}\int_{-\rho}^\rho 
|\mathcal{E}^{x_2}_{\epsilon,a}(v_{\epsilon,a}(\cdot,x_2))|\dd x_2=0 \text{ when } a\in (0,a_*),
\end{equation}
and
\begin{equation}\label{fatnew2}
\lim_{\epsilon\to 0}\int_{-\rho}^\rho 
\Big|\mathcal{E}^{x_2}_{\epsilon,a}(v_{\epsilon,a}(\cdot,x_2))     -       \frac{2\sqrt{2}}{3}(\mu(0,x_2))^{3/2}+a
\int_{|t|<\sqrt{\rho^2-x_2^2}}  |f_1(t,x_2)| \sqrt{\mu(t,x_2)}\dd t\Big| \dd x_2=0\text{ when } a\in (a^*,\infty).  
 \end{equation}
\end{lemma}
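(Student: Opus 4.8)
The plan is to reduce everything to the one-dimensional theory of \cite{panayotis_1} by slicing the renormalized energy along the fibers $\{(t,x_2):t\in\R\}$. Splitting $|\nabla u|^2=|\partial_{x_1}u|^2+|\partial_{x_2}u|^2$ and using Fubini (legitimate since $v:=v_{\epsilon,a}\in H^1(\R^2)$, so $v(\cdot,x_2)\in H^1(\R)$ for a.e.\ $x_2$), one unwinds the definition of $\mathcal E$ in \eqref{renorm} and of $\mathcal E^{x_2}$ to get the exact identity
\[
\mathcal{E}_{\epsilon,a}(v)=\int_{\R^2}\frac{\epsilon}{2}\Big|\frac{\partial v}{\partial x_2}\Big|^2+\int_{-\rho}^{\rho}\mathcal{E}^{x_2}_{\epsilon,a}(v(\cdot,x_2))\,\dd x_2+\int_{|x_2|>\rho}\mathcal{E}^{x_2}_{\epsilon,a}(v(\cdot,x_2))\,\dd x_2=:A_\epsilon+B_\epsilon+C_\epsilon .
\]
Here $A_\epsilon\ge0$, and for a.e.\ $x_2$ we have the pointwise lower bound $\mathcal{E}^{x_2}_{\epsilon,a}(v(\cdot,x_2))\ge\mathcal{E}^{x_2}_{\epsilon,a}(\psi^{x_2}_{\epsilon,a})=\min_{H^1(\R)}\mathcal{E}^{x_2}_{\epsilon,a}$ (recall $\mathcal{E}^{x_2}$ and $E^{x_2}$ differ by an $x_2$-dependent constant, so $\psi^{x_2}_{\epsilon,a}$ minimizes both). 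I would also record two elementary bounds: first, an $\epsilon$-uniform bound $\mathcal{E}^{x_2}_{\epsilon,a}(\psi^{x_2}_{\epsilon,a})\ge -g(x_2)$ with $g\in L^1(\R)$, obtained (as in \cite{panayotis_1}) by absorbing the linear term $-a\int f_1\phi$ into the quartic part of the nonnegative potential via Young's inequality; second, on the outer fibers $|x_2|>\rho$ one has $\mu<0$ on the whole line, so the potential in $\mathcal{E}^{x_2}=E^{x_2}$ is nonnegative there and, using $|v(x)|\le K\epsilon^{1/3}$ from Lemma \ref{l2} (valid since $\mu^+\equiv0$ on those lines) together with $f_1\in L^1$, one gets $C_\epsilon\ge -aK\epsilon^{1/3}\|f_1\|_{L^1(\R^2)}=:-\delta_\epsilon\to0$.

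Next I would pin down the right-hand sides. Lemma \ref{bbnm} combined with the chain $a_*\le \tfrac{2\sqrt2\int_{|r|<\rho}(\mu_{\mathrm{rad}})^{3/2}}{3\int_{D(0;\rho)}|f_1|\sqrt\mu}\le a^*$ from Lemma \ref{astar} gives $\limsup_{\epsilon\to0}\mathcal{E}_{\epsilon,a}(v)\le 0$ when $a\in(0,a_*)$, and $\limsup_{\epsilon\to0}\mathcal{E}_{\epsilon,a}(v)\le \int_{-\rho}^{\rho}\ell(x_2)\,\dd x_2$ when $a\in(a^*,\infty)$, where $\ell(x_2):=\frac{2\sqrt2}{3}(\mu(0,x_2))^{3/2}-a\int_{|t|<\sqrt{\rho^2-x_2^2}}|f_1(t,x_2)|\sqrt{\mu(t,x_2)}\,\dd t$ and $\int_{-\rho}^\rho\ell=\frac{2\sqrt2}{3}\int_{-\rho}^\rho(\mu_{\mathrm{rad}})^{3/2}-a\int_{D(0;\rho)}|f_1|\sqrt\mu$ by Fubini. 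On the other hand, $a<a_*$ forces $a<a_*(x_2)$ for every $x_2\in(-\rho,\rho)$ and $a>a^*$ forces $a>a^*(x_2)$ there, so \eqref{recal1}, resp.\ \eqref{recal2}, gives $\lim_{\epsilon\to0}\mathcal{E}^{x_2}_{\epsilon,a}(\psi^{x_2}_{\epsilon,a})=\ell_0(x_2)$ pointwise on $(-\rho,\rho)$, with $\ell_0\equiv0$ in the first regime and $\ell_0=\ell$ in the second. Fatou's lemma applied to $\mathcal{E}^{x_2}_{\epsilon,a}(\psi^{x_2}_{\epsilon,a})+g(x_2)\ge0$ then yields $\liminf_{\epsilon\to0}\int_{-\rho}^\rho\mathcal{E}^{x_2}_{\epsilon,a}(\psi^{x_2}_{\epsilon,a})\,\dd x_2\ge\int_{-\rho}^\rho\ell_0$.

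Now comes a squeezing argument. From $\mathcal{E}_{\epsilon,a}(v)=A_\epsilon+B_\epsilon+C_\epsilon\ge A_\epsilon+\int_{-\rho}^\rho\mathcal{E}^{x_2}_{\epsilon,a}(\psi^{x_2}_{\epsilon,a})\,\dd x_2-\delta_\epsilon$, together with the bound on $\limsup\mathcal{E}_{\epsilon,a}(v)$ and the Fatou bound, one first gets $\limsup_\epsilon A_\epsilon\le0$, hence $A_\epsilon\to0$, i.e.\ \eqref{fattlem1}. Feeding $A_\epsilon\to0$ back, the same splitting gives $\limsup_\epsilon B_\epsilon\le\int_{-\rho}^\rho\ell_0$ (using $C_\epsilon\ge-\delta_\epsilon$), while $B_\epsilon\ge\int_{-\rho}^\rho\mathcal{E}^{x_2}_{\epsilon,a}(\psi^{x_2}_{\epsilon,a})\,\dd x_2$ and Fatou give $\liminf_\epsilon B_\epsilon\ge\int_{-\rho}^\rho\ell_0$; hence $\int_{-\rho}^\rho\big(\mathcal{E}^{x_2}_{\epsilon,a}(v(\cdot,x_2))-\ell_0(x_2)\big)\,\dd x_2\to0$. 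To upgrade this signed convergence to convergence in $L^1(-\rho,\rho)$ — which is precisely what \eqref{fatnew1} and \eqref{fatnew2} assert — observe that $\mathcal{E}^{x_2}_{\epsilon,a}(v(\cdot,x_2))-\ell_0(x_2)\ge \mathcal{E}^{x_2}_{\epsilon,a}(\psi^{x_2}_{\epsilon,a})-\ell_0(x_2)$, whose negative part tends to $0$ pointwise and is dominated by $g+|\ell_0|\in L^1(-\rho,\rho)$; dominated convergence kills the negative part of $\mathcal{E}^{x_2}_{\epsilon,a}(v(\cdot,x_2))-\ell_0(x_2)$ in $L^1$, and since the signed integral also tends to $0$ the positive part follows. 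This completes \eqref{fatnew1} and \eqref{fatnew2}.

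The genuinely delicate points are: (i) the $\epsilon$-uniform, $x_2$-integrable lower bound $\mathcal{E}^{x_2}_{\epsilon,a}(\psi^{x_2}_{\epsilon,a})\ge-g(x_2)$, without which neither the Fatou step nor the final dominated-convergence step is available; and (ii) the book-keeping of $\limsup$/$\liminf$ that turns the one-sided energy estimates (the upper bound from Lemma \ref{bbnm}, the fiberwise minimality lower bounds) into the two-sided $L^1$ statements. Everything else is Fubini, Young's inequality, Lemma \ref{l2}, and the one-dimensional facts \eqref{recal1}--\eqref{recal2} quoted from \cite{panayotis_1}.
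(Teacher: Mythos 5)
Your proof is correct and follows essentially the same route as the paper's: the same fiberwise splitting of the renormalized energy, the same upper bound from Lemma \ref{bbnm} combined with \eqref{inegaaa}, the same fiberwise lower bound via the one-dimensional minimizers $\psi^{x_2}_{\epsilon,a}$ together with \eqref{recal1}--\eqref{recal2} and Fatou, the same squeeze, and the same dominated-convergence argument on the negative part to upgrade the signed limit to the $L^1$ statements \eqref{fatnew1}--\eqref{fatnew2}. The only (harmless) deviations are that you control the outer contribution $\int_{|x_2|>\rho}f_1 v$ via Lemma \ref{l2} rather than via Theorem \ref{theorem 1} (iii) with dominated convergence, and that you make explicit the integrable minorant needed for the Fatou step, which the paper merely asserts as a uniform lower bound.
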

\begin{proof}
It is clear that 
$\mathcal E_{\epsilon,a}(v_{\epsilon,a})=\frac{\epsilon}{2}\int_{\R^2}|v_{x_2}|^2+\int_{-\rho}^\rho 
\mathcal{E}^{x_2}_{\epsilon,a}(v_{\epsilon,a}(\cdot,x_2))\dd x_2 +\int_{|x_2|>\rho}\frac{v^2(v^2-2\mu)}{4\epsilon}- a \int_{|x_2|>\rho}f_1 v$.
We are going to examine each of these integrals. 
In view of Theorem \ref{theorem 1} (iii), we have by dominated convergence
\begin{equation}\label{cvdomin}
 \lim_{\epsilon\to 0}\int_{|x_2|>\rho}f_1 v=0.
\end{equation}
On the other hand, since $\mathcal{E}^{x_2}_{\epsilon,a}(v_{\epsilon,a}(\cdot,x_2))\geq 
\mathcal{E}^{x_2}(\psi^{x_2}_{\epsilon,a})$, and $\mathcal{E}^{x_2}_{\epsilon,a}(v_{\epsilon,a}(\cdot,x_2))$ is uniformly bounded from below on $(-\rho,\rho)$,
it follows from \eqref{recal1}, \eqref{recal2}, and Fatou's Lemma that 
\begin{align}\label{fatt}
\liminf_{\epsilon\to 0}\int_{-\rho}^\rho 
\mathcal{E}^{x_2}_{\epsilon,a}(v_{\epsilon,a}(\cdot,x_2))\dd x_2&\geq\int_{-\rho}^\rho  \liminf_{\epsilon\to 0}
\mathcal{E}^{x_2}_{\epsilon,a}(v_{\epsilon,a}(\cdot,x_2))\dd x_2
\geq\int_{-\rho}^\rho  \liminf_{\epsilon\to 0}\mathcal{E}^{x_2}(\psi^{x_2}_{\epsilon,a})\nonumber \\
&\geq\begin{cases}0 &\text{ when } a\in (0,a_*),\\
\frac{2\sqrt{2}}{3}\int_{-\rho}^\rho (\mu_{\mathrm{rad}}(r))^{3/2}\dd r
-a\int_{D(0;\rho)}|f_1|\sqrt{\mu} &\text{ when } a\in (a^*,\infty).  
\end{cases}
 \end{align}
Next, we utilize \eqref{quaq1}, \eqref{cvdomin}, and \eqref{inegaaa}, to obtain 
\begin{align}\label{fattcal}
 \limsup_{\epsilon\to 0}\int_{-\rho}^\rho 
\mathcal{E}^{x_2}_{\epsilon,a}(v_{\epsilon,a}(\cdot,x_2))\dd x_2&\leq \limsup_{\epsilon\to 0}\mathcal{E}_{\epsilon,a}(v_{\epsilon,a})\nonumber \\
&\leq\begin{cases}0 &\text{ when } a\in (0,a_*),\\
\frac{2\sqrt{2}}{3}\int_{-\rho}^\rho (\mu_{\mathrm{rad}}(r))^{3/2}\dd r
-a\int_{D(0;\rho)}|f_1|\sqrt{\mu} &\text{ when } a\in (a^*,\infty).  
\end{cases}
 \end{align}
Combining \eqref{fatt} with \eqref{fattcal}, we deduce that  
\begin{equation}\label{fattlem}
\lim_{\epsilon\to 0}\int_{-\rho}^\rho 
\mathcal{E}^{x_2}_{\epsilon,a}(v_{\epsilon,a}(\cdot,x_2))\dd x_2=\begin{cases}0 &\text{ when } a\in (0,a_*),\\
\frac{2\sqrt{2}}{3}\int_{-\rho}^\rho (\mu_{\mathrm{rad}}(r))^{3/2}\dd r
-a\int_{D(0;\rho)}|f_1|\sqrt{\mu} &\text{ when } a\in (a^*,\infty),
\end{cases}
 \end{equation}
from which \eqref{fattlem1} follows. For a.e. $x_2\in(-\rho,\rho)$, we also obtain (respectively when $a\in (0,a_*)$ and  $a\in(a^*,\infty)$), that 
 \begin{equation}\label{fattlem2}
\liminf_{\epsilon\to 0} \mathcal{E}^{x_2}_{\epsilon,a}(v_{\epsilon,a}(\cdot,x_2))=\begin{cases}0,\\
 \frac{2\sqrt{2}}{3}(\mu(0,x_2))^{3/2}-a
\int_{|t|<\sqrt{\rho^2-x_2^2}} |f_1(t,x_2)| \sqrt{\mu(t,x_2)}\dd t, 
\end{cases}
 \end{equation}
thus
 \begin{equation}\label{fattlem23}\begin{cases}
\lim_{\epsilon\to 0} \min\big[\mathcal{E}^{x_2}_{\epsilon,a}(v_{\epsilon,a}(\cdot,x_2)),0\big]=0,\\
\lim_{\epsilon\to 0} \min\big[\mathcal{E}^{x_2}_{\epsilon,a}(v_{\epsilon,a}(\cdot,x_2))- \frac{2\sqrt{2}}{3}(\mu(0,x_2))^{3/2}+a
\int_{|t|<\sqrt{\rho^2-x_2^2}} |f_1(t,x_2)| \sqrt{\mu(t,x_2)}\dd t,0\big]=0,
\end{cases}
 \end{equation}
and by dominated convergence
\begin{equation}\label{fattlem24}\begin{cases}
\lim_{\epsilon\to 0}\int_{-\rho}^\rho \min\big[\mathcal{E}^{x_2}_{\epsilon,a}(v_{\epsilon,a}(\cdot,x_2)),0\big]\dd x_2=0,\\
\lim_{\epsilon\to 0} \int_{-\rho}^\rho\min\big[\mathcal{E}^{x_2}_{\epsilon,a}(v_{\epsilon,a}(\cdot,x_2))- \frac{2\sqrt{2}}{3}(\mu(0,x_2))^{3/2}+a
\int_{|t|<\sqrt{\rho^2-x_2^2}}  |f_1(t,x_2)| \sqrt{\mu(t,x_2)}\dd t,0\big]\dd x_2=0.
\end{cases}
\end{equation}
Combining \eqref{fattlem} with \eqref{fattlem24}, we conclude that \eqref{fatnew1} and \eqref{fatnew2} hold.
 \end{proof}
The proof of Theorem \ref{thcv} (ii) will follow from

\begin{lemma}\label{conclth2}For fixed $a\in (0,a_*)$, we have
\begin{equation}\label{qaz1}
\lim_{\epsilon\to 0}\int_{\R^2}f_1v_{\epsilon,a}=0,  
\end{equation}
and 
\begin{equation}\label{qaz2}
\lim_{\epsilon\to 0}\Big(\int_{\R^2}\frac{\epsilon}{2}|\nabla v_{\epsilon,a}|^2+\int_{|x|<\rho}
\frac{(v_{\epsilon,a}^2-\mu)^2}{4\epsilon}\Big)=0.
\end{equation}
\end{lemma}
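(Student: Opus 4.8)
The plan is to reduce both claims to \eqref{qaz1}, and then to prove \eqref{qaz1} by a Fubini argument that sends the question to the one-dimensional problem analysed in \cite{panayotis_1}. Granting \eqref{qaz1}, formula \eqref{qaz2} follows immediately: rewriting \eqref{renorm} as
\[\int_{\R^2}\frac{\epsilon}{2}|\nabla v_{\epsilon,a}|^2+\int_{|x|<\rho}\frac{(v_{\epsilon,a}^2-\mu)^2}{4\epsilon}+\int_{|x|>\rho}\frac{v_{\epsilon,a}^2(v_{\epsilon,a}^2-2\mu)}{4\epsilon}=\mathcal{E}_{\epsilon,a}(v_{\epsilon,a})+a\int_{\R^2}f_1v_{\epsilon,a},\]
the left-hand side is a sum of three non-negative terms (recall $\mu<0$ a.e. on $\{|x|>\rho\}$). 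Since $\limsup_{\epsilon\to0}\mathcal{E}_{\epsilon,a}(v_{\epsilon,a})\le0$ by Lemma~\ref{bbnm} — for $a\in(0,a_*)$ the estimate \eqref{inegaaa} makes $\frac{2\sqrt2}{3}\int_{-\rho}^{\rho}(\mu_{\mathrm{rad}}(r))^{3/2}\,\dd r-a\int_{D(0;\rho)}|f_1|\sqrt\mu$ strictly positive, so the minimum in \eqref{quaq1} equals $0$ — and since $\int_{\R^2}f_1v_{\epsilon,a}\to0$ by \eqref{qaz1}, the right-hand side has $\limsup\le0$; being non-negative it tends to $0$, hence so does each of its three summands, which is in particular \eqref{qaz2}.

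To prove \eqref{qaz1} I would split $\int_{\R^2}f_1v_{\epsilon,a}=\int_{|x_2|\ge\rho}f_1v_{\epsilon,a}+\int_{|x_2|<\rho}f_1v_{\epsilon,a}$. The first integral tends to $0$ by \eqref{cvdomin} (off a null set $|x_2|\ge\rho$ forces $|x|>\rho$, so $v_{\epsilon,a}\to0$ pointwise there by Theorem~\ref{theorem 1}(iii), while $|f_1v_{\epsilon,a}|\le M|f_1|\in L^1(\R^2)$ by Lemma~\ref{s3}). For the second, written as $\int_{-\rho}^{\rho}g_{\epsilon,a}(x_2)\,\dd x_2$ with $g_{\epsilon,a}(x_2):=\int_{\R}f_1(t,x_2)v_{\epsilon,a}(t,x_2)\,\dd t$, I would show $g_{\epsilon,a}\to0$ in $L^1(-\rho,\rho)$. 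By \eqref{fatnew1} there is a sequence $\epsilon_n\to0$ along which $\mathcal{E}^{x_2}_{\epsilon_n,a}(v_{\epsilon_n,a}(\cdot,x_2))\to0$ for a.e. $x_2\in(-\rho,\rho)$; for such $x_2$ one also has $\int_{\R}|f_1(t,x_2)|\,\dd t<\infty$ (Fubini) and $a<a_*\le a_*(x_2)$.

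Fix such an $x_2$ and set $\phi_n:=v_{\epsilon_n,a}(\cdot,x_2)\in H^1(\R)$, $T:=\sqrt{\rho^2-x_2^2}$. Then $|\phi_n(t)|\le K(\sqrt{\mu^+(t,x_2)}+\epsilon_n^{1/3})$ by Lemma~\ref{l2} and $\mathcal{E}^{x_2}_{\epsilon_n,a}(\phi_n)\to0$. The one-dimensional analysis of \cite{panayotis_1} applies here as a dichotomy: a one-dimensional family obeying the bound of Lemma~\ref{l2} either has its zeros escaping to $\{|t|=T\}$, in which case its renormalized energy tends to $0$, or keeps a zero at positive distance from $\pm T$, in which case — precisely because $a<a_*(x_2)$ — the transition layer traps in the limit a positive amount of renormalized energy that the term $-a\int_{\R}f_1(\cdot,x_2)\phi_n$ cannot offset. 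Since $\mathcal{E}^{x_2}_{\epsilon_n,a}(\phi_n)\to0$, the first alternative must hold, so $\phi_n\to\sigma\sqrt{\mu^+(\cdot,x_2)}$ for some $\sigma=\sigma(x_2)\in\{-1,1\}$, pointwise a.e. in $t$. As $t\mapsto f_1(t,x_2)\sqrt{\mu^+(t,x_2)}$ is odd (product of an odd and an even function), its integral over $\R$ vanishes, and with the $L^1(\R)$ majorant $K(\|\mu\|_{L^\infty}^{1/2}+1)|f_1(\cdot,x_2)|$ (valid for $\epsilon_n\le1$) dominated convergence gives $g_{\epsilon_n,a}(x_2)\to\sigma\cdot0=0$. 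Since $|g_{\epsilon_n,a}(x_2)|\le K(\|\mu\|_{L^\infty}^{1/2}+1)\int_{\R}|f_1(t,x_2)|\,\dd t$ for $\epsilon_n\le1$, and this majorant lies in $L^1(-\rho,\rho)$ (because $f_1\in L^1(\R^2)$), a further application of dominated convergence in $x_2$ yields $\int_{|x_2|<\rho}f_1v_{\epsilon_n,a}\to0$. As every subsequence of $\epsilon\to0$ admits such a further subsequence, $\int_{\R^2}f_1v_{\epsilon,a}\to0$, i.e. \eqref{qaz1}, and hence also \eqref{qaz2}.

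The step I expect to be the main obstacle is the one-dimensional dichotomy invoked in the previous paragraph: proving that the restriction $v_{\epsilon,a}(\cdot,x_2)$ cannot retain an interior zero in the limit $\epsilon\to0$ when $a<a_*(x_2)$. This rests squarely on the detailed shadow-kink analysis of \cite{panayotis_1} and on the exact value of $a_*(x_2)$ (the point being that the forcing term $-a\int f_1(\cdot,x_2)v_{\epsilon,a}(\cdot,x_2)$ cannot pay for an interior transition layer when $a$ is below this threshold); everything else is Fubini and dominated convergence. Note finally that this argument does not distinguish whether a limiting zero of $v_{\epsilon,a}$ sits on the circle $\{|x|=\rho\}$ itself, which is why Theorem~\ref{thcv}(ii) can only assert $Z\subset\{|x|=\rho\}\cup\{x_1=0,\ |x_2|\ge\rho\}$ rather than the smaller set one would expect.
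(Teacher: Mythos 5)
Your argument has the same skeleton as the paper's: deduce \eqref{qaz2} from \eqref{qaz1} together with the upper bound \eqref{quaq1}, and prove \eqref{qaz1} by slicing, extracting a subsequence along which the one--dimensional renormalized energies $\mathcal{E}^{x_2}_{\epsilon,a}(v_{\epsilon,a}(\cdot,x_2))$ vanish for a.e.\ $x_2$ (Lemma \ref{bbnmbispr}), invoking the interior--zero lower bound of \cite{panayotis_1} (valid precisely because $a<a_*\le a_*(x_2)$) to force the zeros of the slice out to $|t|=\sqrt{\rho^2-x_2^2}$, and then using the oddness of $t\mapsto f_1(t,x_2)\sqrt{\mu^+(t,x_2)}$ plus two applications of dominated convergence. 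The one genuine divergence is how the slice-wise limit $|v_{\epsilon,a}(t,x_2)|\to\sqrt{\mu^+(t,x_2)}$ is identified. The paper blows up in \emph{two} dimensions at each interior point $(t,x_2)$, obtains a minimal entire solution of $\Delta \tilde V+(\mu-\tilde V^2)\tilde V=0$, and must then rule out a heteroclinic profile oriented in the $x_2$-direction; this is exactly what the second conclusion \eqref{fattlem1} of Lemma \ref{bbnmbispr} (vanishing of $\int\epsilon|\partial_{x_2}v|^2$, i.e.\ \eqref{claa1}) is used for. You never leave the one-dimensional slice and consequently never need \eqref{fattlem1}, but as written the inference ``the first alternative holds, so $\phi_n\to\sigma\sqrt{\mu^+}$ pointwise a.e.'' is a jump: absence of interior zeros gives only a constant sign, not the modulus. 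To close it without the $2$D blow-up you should note that $\mathcal{E}^{x_2}_{\epsilon_n,a}(\phi_n)\to 0$ together with the bound $\bigl|a\int_\R f_1(\cdot,x_2)\phi_n\bigr|\le aK(\|\mu\|_{L^\infty}^{1/2}+1)\|f_1(\cdot,x_2)\|_{L^1(\R)}$ (finite for a.e.\ $x_2$) forces $\int_{|t|<T}(\phi_n^2-\mu)^2\,\dd t=\mathcal O(\epsilon_n)$, hence $\phi_n^2\to\mu$ in measure on the slice, which combined with the constant sign yields the a.e.\ convergence along a further subsequence; the subsequence-of-subsequences device you already use at the end absorbs this. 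With that sentence added your route is complete and is, if anything, slightly more economical than the paper's, at the cost of giving only a.e.-in-$t$ rather than everywhere convergence (which is all this lemma needs). The reliance on \cite[Proof of Theorem 1.1, Step 6]{panayotis_1} for the interior-zero dichotomy is exactly the citation the paper itself makes, so flagging it as the main external input is accurate.
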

\begin{proof}
Given a sequence $\epsilon_n\to 0$, we are going to show that we can extract a subsequence $\epsilon'_n\to 0$ such that $\lim_{n\to\infty}\int_{\R^2}f_1v_{\epsilon'_n,a}=0$. This will prove \eqref{qaz1}.
According to \eqref{fattlem1} and \eqref{fatnew1}, there exists a negligible set $N\subset (-\rho,\rho)$ such that for a subsequence called $\epsilon'_n$, and for every $x_2\in (-\rho,\rho)\setminus N$, we have
\begin{equation}\label{claa1}
\lim_{n\to \infty}\int_{\R}\epsilon'_n\Big| \frac{\partial v_n}{\partial x_2}(t,x_2)\Big|^2\dd t=0, 
 \end{equation}
and
\begin{equation}\label{claa2}
\lim_{n\to \infty}\mathcal{E}^{x_2}_{\epsilon'_n,a}(v_n(\cdot,x_2))=0, 
\end{equation}
where we have set $v_n=v_{\epsilon'_n,a}$.
Our claim is that 
\begin{equation}\label{claiml1}
\lim_{n\to\infty}\int_{\R} f_1(t,x_2)v_n(t,x_2) \dd t=0, \ \forall x_2\in (-\rho,\rho)\setminus N.
\end{equation}
From \eqref{claa1} and \eqref{claa2}, it follows that given $x_2\in (-\rho,\rho)\setminus N$ and $\gamma\in(0,\sqrt{\rho_2^2-x_2^2})$, there exists $ \bar n(x_2,\gamma)$ such that 
\begin{equation}\label{zerovv}
n\geq\bar  n(x_2,\gamma),\ |t|< \gamma\Rightarrow v_n(t,x_2)\neq 0.
\end{equation}
Indeed, otherwise we can find a subsequence $n_k$ and a sequence $(-\gamma,\gamma)\ni t_k\to t_0$ such that $v_{n_k}(t_k,x_2)=0$. Then, proceeding as in \cite[Proof of Theorem 1.1, Step 6]{panayotis_1} we obtain that $\liminf_{k\to \infty}\mathcal{E}^{x_2}_{\epsilon'_{n_k},a}(v_{n_k}(\cdot,x_2))>0$, which contradicts \eqref{claa2}. Next, for fixed $t \in (-\gamma,\gamma)$, we set $\tilde v_n(s):=v_n(t+\epsilon'_ns_1,x_2+\epsilon'_n s_2)$, and proceeding as in the proof of Theorem \ref{theorem 1} (i) above, we can see that 
$\tilde v_n$ converges in $C^2_{\mathrm{ loc}}(\R^2)$ to a minimal solution $\tilde V$ of the equation $\Delta \tilde V +(\mu(t,x_2)-\tilde V^2)\tilde V=0$.
If $\tilde V(s)=\sqrt{\mu(t,x_2)}\tanh(\sqrt{\mu(t,x_2)/2}(s-s_0)\cdot \nu)$, for some unit vector $\nu =(\nu_1,\nu_2)\in \R^2$, and some $s_0\in \R^2$, then \eqref{zerovv} excludes the case where $\nu_1\neq 0$, while \eqref{claa1} excludes the case where $\nu_2\neq 0$. Thus, $\tilde V(s)\equiv \pm \sqrt{\mu(t,x_2)}$, and in particular $\lim_{n\to\infty}|v_n(t,x_2)|=\sqrt{\mu(t,x_2)}$. Finally, given $\delta>0$, we choose $\gamma$ such that $2(\sqrt{\rho^2-x_2^2}-\gamma)\|f_1\| _{L^\infty}\sup_n  \|v_n\| _{L^\infty}<\delta/2$, and since $\lim_{n\to\infty}\big|\int_{-\gamma}^\gamma f_1(t,x_2)v_n(t,x_2)\dd t\big|=\lim_{n\to\infty}\big|\int_{-\gamma}^\gamma f_1(t,x_2)|v_n(t,x_2)|\dd t\big|=0$, we deduce that $$\Big|\int_{|t|<\sqrt{\rho^2-x_2^2}} f_1(t,x_2)v_n(t,x_2)\dd t\Big|<\delta$$ provided that $n$ is big enough. This proves that  $\lim_{n\to\infty}\int_{|t|<\sqrt{\rho2-x_2^2}}f_1(t,x_2)v_n(t,x_2)\dd t=0$, and recalling that $\lim_{n\to\infty}\int_{|t|>\sqrt{\rho2-x_2^2}}f_1(t,x_2)v_n(t,x_2)\dd t=0$ in view of Theorem \ref{theorem 1} (iii), we have established \eqref{claiml1}. Then, we conclude that $\lim_{n\to \infty}\int_{|x_2|<\rho}f_1v_n=0$ by dominated convergence, and since $\lim_{n\to \infty}\int_{|x_2|>\rho}f_1v_n=0$ by Theorem \ref{theorem 1} (iii), we have proved \eqref{qaz1}. The limit in \eqref{qaz2} follows from \eqref{qaz1} and \eqref{quaq1}.
\end{proof}
\begin{proof}[Conclusion of the proof of Theorem \ref{thcv} (ii)]
We first show that when $a\in(0,a_*)$, we have $Z \subset \{|x|=\rho\} \cup \{x_1=0, |x_2|\geq \rho\}$. 
Assume by contradiction that there exist a sequence $\epsilon_n\to 0$, and a sequence $\bar x_n\to x_0 \in D(0;\rho_0)$, with $\rho_0<\rho$, such that $v_n:=v_{\epsilon_n,a}$ vanishes at $\bar x_n$. By Lemma \ref{smooth} (i), we know that $\bar x_n$ belongs to a smooth branch of zeros that we called $Z_{\epsilon_n}$. Let $D_1(n)=\{x_1: (x_1,x_2)\in Z_{\epsilon_n}\}$, $D_2(n)=\{x_2: (x_1,x_2)\in Z_{\epsilon_n}\}$, and for $i=1,2$, let $\delta_i(n)=\mathcal L^1(D_i(n))$, where $\mathcal L$ denotes the Lebesgue measure. Since by Lemma \ref{smooth} (ii), we have $\frac{(v_{n}^2(x)-\mu(x))^2}{4\epsilon_n}\geq \frac{9\mu_{\mathrm{rad}}^2(\rho_0)}{4^3\epsilon_n}$, for $x \in \cup_{z\in Z_{\epsilon_n} }D(z; l\epsilon_n)$,
it follows that $ \frac{9\mu_{\mathrm{rad}}^2(\rho_0)l}{4^3}\delta_i(n)\leq \int_{|x|<\rho}\frac{(v_{n}^2-\mu)^2}{4\epsilon_n}$, and thus $\lim_{n \to\infty}\delta_i(n)=0$, in view of \eqref{qaz2}. This implies in particular that $\bar x_n$ belongs to a smooth Jordan curve $\Gamma_n\subset D(0;\rho_0)$. Let $\omega_n$ be the open set bounded by $\Gamma_n$, let $\nu_n(z)$ be the outer unit normal vector at $z\in\Gamma_n$, and let us define the open set $\Omega_n=\{x\in\R^2: d(x,\overline \omega_n)<\lambda \epsilon_n\}$, where $d$ stands for the Euclidean distance, and $\lambda$ is the constant defined in Lemma \ref{smooth}. 
As previously we set $\tilde D_1(n)=\{x_1: (x_1,x_2)\in \Gamma_{n}\}$, $\tilde D_2(n)=\{x_2: (x_1,x_2)\in \Gamma_{n}\}$, and for $i=1,2$, $\tilde \delta_i(n)=\mathcal L^1(\tilde D_i(n))$.
By Lemma \ref{smooth} (iii), we have either $v_n\geq \frac{3}{4}\sqrt{\mu_{\mathrm{rad}}(\rho_0)}$ or $v_n\leq -\frac{3}{4}\sqrt{\mu_{\mathrm{rad}}(\rho_0)}$ on $\partial \Omega_n$. Assuming without loss of generality that $v_n\geq \frac{3}{4}\sqrt{\mu_{\mathrm{rad}}(\rho_0)}$ on $\partial \Omega_n$, we introduce the comparison function
\begin{equation}\label{chi1}
\chi_n(x)=\begin{cases}
v_n(x) &\text{for } x\in\R^2\setminus \Omega_n\\
\max(|v_n(x) |,  \sqrt{\mu_{\mathrm{rad}}(\rho_0)}/2)&\text{for } x\in\Omega_n,
\end{cases}
\end{equation} 
and notice that $|\mu-\chi_n^2|\leq|\mu-v_n^2|$. Setting $S_n:=\{x: d(x, \Gamma_n)< l\epsilon_n\}\subset \Omega_n$, it is clear that 
$\mathcal L^2(S_n)\geq \tilde\delta_i(n)l\epsilon_n$ for $i=1,2$. In addition, according to Lemma \ref{smooth} (ii) and (iv), the inequalities $|v_n|\leq \sqrt{\mu_{\mathrm{rad}}(\rho_0)}/2$, and $\epsilon_n |\nabla v_n|\geq \lambda'$ hold on $S_n$.
Finally, we also notice that Lemma \ref{smooth} (iv) implies that $\tilde \delta_i(n)\geq \lambda\epsilon_n$. Gathering these results we reach the following contradiction
\begin{align}
\mathcal{E}_{\epsilon_n,a}(\chi_{n})-\mathcal{E}_{\epsilon_n,a}(v_{n})&\leq -\frac{\epsilon_n}{2}\int_{|v_n|\leq  \sqrt{\mu_{\mathrm{rad}}(\rho_0)}/2}|\nabla v_n|^2+a\int_{\Omega_n}f(v_n-\chi_n)\nonumber\\
&\leq-\frac{|\lambda'|^2}{2\epsilon_n}\mathcal L^2(S_n)+K\mathcal L^2(\Omega_n), \text{ where $K>0$ is a constant} \nonumber\\
&\leq-\frac{|\lambda'|^2l}{2}\tilde \delta_1(n)+K(\tilde \delta_1(n)+2\lambda \epsilon_n)(\tilde \delta_2(n)+2\lambda \epsilon_n)\nonumber\\
&\leq\Big( 9K\tilde \delta_2(n)-\frac{|\lambda'|^2l}{2}\Big)\tilde \delta_1(n)<0, \text{ for $n$ large enough}. 
\end{align}
This proves that there are no limit points of the zeros of $v$ in $D(0;\rho)$. In view of Theorem \ref{theorem 1} (iii) we deduce that
$Z \subset \{|x|=\rho\} \cup \{x_1=0, |x_2|\geq \rho\}$. Another consequence is that given $\rho_0\in(0,\rho)$, there exists $\epsilon_0>0$ such that when $\epsilon\in (0,\epsilon_0)$, the minimizer $v_{\epsilon,a}$ does not vanish on $D(0;\rho_0)$.
Up to change of $v(x_1,x_2)$ by $-v(-x_1,x_2)$, we may assume that $v_{\epsilon,a}>0$ on $D(0;\rho_0)$. Then, in view of Theorem \ref{theorem 1} (iii) we have
$ \{x_1<0, |x|=\rho\} \cup \{x_1=0, |x_2|\geq \rho\}\subset Z $. Finally, the limit in \eqref{cvn2} follows from Theorem \ref{theorem 1} (i), in the case where $|x|<\rho$. On the other hand, for fixed $x$ such that $|x|\geq\rho$, the rescaled minimizers
$\tilde v(s)= v(x+s\epsilon)$ converge to a bounded solution $\tilde V$ of the equation $\Delta \tilde V(s)+(\mu(x)-\tilde V^2(s))\tilde V(s)=0$. As in the proof of Theorem \ref{theorem 1} (iii),
the associated potential $W(u)=\frac{u^4}{4}-\frac{\mu(x)}{2}u^2$ is stricly convex, thus $\tilde V$ satisfies $W'(\tilde V)=0$, i.e. $\tilde V=0$. 
\end{proof}

Now we establish the analog of Lemma \ref{conclth2} in the case where $a>a^*$, to complete the proof of Theorem \ref{thcv} (iii).

\begin{lemma}\label{conclth2aa}For fixed $a\in (a^*,\infty)$, and for every $\gamma\in (0,\rho)$, we have
\begin{equation}\label{qaz1aa}
\lim_{\epsilon\to 0}\int_{\R^2}f_1v_{\epsilon,a}=\int_{D(0;\rho)}|f_1|\sqrt{\mu},  
\end{equation}
\begin{equation}\label{fattlem1aa}
\lim_{\epsilon\to 0}\int_{|x|<\rho, |x_1|<\gamma}\Big(\frac{\epsilon}{2}\Big| \frac{\partial v_{\epsilon,a}}{\partial x_1}\Big|^2+\frac{(v_{\epsilon,a}^2-\mu)^2}{4\epsilon}\Big)=\int_{-\rho}^\rho \frac{2\sqrt{2}}{3}(\mu(0,x_2))^{3/2}\dd x_2,
 \end{equation}
\begin{equation}\label{qaz2aa}
\lim_{\epsilon\to 0}\int_{|x|<\rho, |x_1|>\gamma}\Big(\frac{\epsilon}{2}| \nabla  v_{\epsilon,a}|^2+\frac{(v_{\epsilon,a}^2-\mu)^2}{4\epsilon}\Big)=0.
\end{equation}
\end{lemma}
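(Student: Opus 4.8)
The plan is to run the argument of Lemma \ref{conclth2} with the roles of $a<a_*$ and $a>a^*$ interchanged, the essential difference being that now every relevant vertical slice carries exactly one transition, located at $x_1=0$, instead of none. Fix a sequence $\epsilon_n\to0$. Using \eqref{fattlem1} and \eqref{fatnew2} (both valid for $a\in(a^*,\infty)$) together with a diagonal extraction, I would pass to a subsequence $\epsilon'_n\to0$ and a Lebesgue-null set $N\subset(-\rho,\rho)$ so that, writing $v_n:=v_{\epsilon'_n,a}$ and $L=L(x_2):=\sqrt{\rho^2-x_2^2}$, for every $x_2\notin N$ one has $\int_\R\epsilon'_n|\partial_{x_2}v_n(t,x_2)|^2\,\dd t\to0$ and $\mathcal{E}^{x_2}_{\epsilon'_n,a}(v_n(\cdot,x_2))\to\ell(x_2):=\tfrac{2\sqrt2}{3}(\mu(0,x_2))^{3/2}-a\int_{|t|<L}|f_1(t,x_2)|\sqrt{\mu(t,x_2)}\,\dd t$. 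After possibly replacing $v$ by $-v(-x_1,x_2)$ I would normalize the orientation so that on each slice $v_n(t,x_2)$ is positive for $t$ large (consistent with Theorem \ref{theorem 1} (iii)).

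The heart of the proof is a one-dimensional analysis of a fixed slice $w_n:=v_n(\cdot,x_2)$, $x_2\notin N$, in the spirit of \cite{panayotis_1}. Boundedness of $\mathcal{E}^{x_2}_{\epsilon'_n,a}(w_n)$ gives $\int_{|t|<L}(w_n^2-\mu)^2\le C\epsilon'_n$, hence $|w_n|\to\sqrt{\mu^+(\cdot,x_2)}$ a.e.\ on $(-L,L)$; combined with the bound on $\int\epsilon|w_n'|^2$ and the blow-up argument of Theorem \ref{theorem 1} (i) this shows $w_n$ has finitely many zeros in $(-L,L)$, converging (along a further subsequence) to points $t_1<\dots<t_k$, with $|w_n|\to\sqrt{\mu^+}$ locally uniformly off the $t_j$ and a $\tanh$ profile of center $t_j$ emerging from the $\epsilon'_n$-rescaling near each $t_j$ — the direction of that transition being forced to be that of the $x_1$-axis by Savin's classification of minimal solutions together with the vanishing of $\int\epsilon|\partial_{x_2}v_n|^2$ on the slice, exactly as in the proof of Lemma \ref{conclth2}. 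Consequently $\lim_n\mathcal{E}^{x_2}_{\epsilon'_n,a}(w_n)=\tfrac{2\sqrt2}{3}\sum_j(\mu(t_j,x_2))^{3/2}-a\int_{|t|<L}f_1\,\sigma_\infty\sqrt{\mu^+}\,\dd t$, where $\sigma_\infty\in\{-1,1\}$ is the limiting sign, which flips at each $t_j$ and equals $-1$ near $-L$ and $+1$ near $L$ by Theorem \ref{theorem 1} (iii). Equating this with $\ell(x_2)$ and using $a>a^*\ge a^*(x_2)$: the parity forces $k$ odd; a single transition at some $t_1\ne0$ is excluded because the resulting limiting energy exceeds $\ell(x_2)$ by $-\tfrac{2\sqrt2}{3}\big((\mu(0,x_2))^{3/2}-(\mu(t_1,x_2))^{3/2}\big)+2a\int_{t_1}^{0}|f_1(t,x_2)|\sqrt{\mu(t,x_2)}\,\dd t$, which is strictly positive precisely when $a>a^*(x_2)$; and any configuration with $k\ge3$ is ruled out by a relocation argument reducing it to the $k=1$ case. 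Hence $k=1$, $t_1=0$, $\sigma_\infty=\sgn(t)$. This is the step I expect to be the main obstacle: as the authors remark in the introduction, a misplaced transition changes the total energy only infinitesimally, so the analysis genuinely has to be carried out at the level of the sliced renormalized energy, which is also why the conclusion — and \eqref{cvn00} — is obtained only for a.e.\ $x_2$.

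With the slice structure in hand, the three identities follow by integrating in $x_2$ and applying dominated convergence (the integrands being controlled by Lemmas \ref{s3}, \ref{l2}, \ref{s3gg} and by Theorem \ref{theorem 1} (iii) on $\{|x|>\rho\}$), followed by the usual removal of the subsequence. Indeed, a.e.\ sign convergence gives $\int_\R f_1 w_n\to\int_{|t|<L}|f_1|\sqrt\mu$, the contribution of $|t|>L$ vanishing by Theorem \ref{theorem 1} (iii), which integrates to \eqref{qaz1aa} after adding the vanishing contribution of $\{|x|>\rho\}$; the unique transition at $t_1=0$ localizes at scale $\epsilon'_n$, so for every $\gamma\in(0,\rho)$, $\int_{|t|<\gamma}\big(\tfrac{\epsilon'_n}{2}|\partial_{x_1}v_n|^2+\tfrac{(v_n^2-\mu)^2}{4\epsilon'_n}\big)\,\dd t\to\tfrac{2\sqrt2}{3}(\mu(0,x_2))^{3/2}$ while the complementary slice-integral over $\gamma<|t|<L$ tends to $0$ (no transitions there), which integrates to \eqref{fattlem1aa}; and \eqref{qaz2aa} combines this last vanishing with $\int_{\R^2}\epsilon|\partial_{x_2}v|^2\to0$ from \eqref{fattlem1}. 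Beyond the lemma, the remaining claims of Theorem \ref{thcv} (iii) then follow in the same way: for a.e.\ $x_2$ and any zero $\bar x_{\epsilon,a}=(\bar t_{\epsilon,a},x_2)$ of $v_{\epsilon,a}$ one has, up to a subsequence, $\bar t_{\epsilon,a}\to0$ (the unique interior transition), and rescaling at scale $\epsilon$ around $\bar x_{\epsilon,a}$ yields \eqref{cvn00} exactly as above; together with Theorem \ref{theorem 1} (iii) this gives $Z=\{x_1=0\}$ and the dichotomy \eqref{cvn1}.
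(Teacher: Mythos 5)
Your proposal follows the paper's strategy quite closely: extract a subsequence and a null set so that \eqref{fattlem1} and \eqref{fatnew2} hold slice by slice, rule out transitions away from $x_1=0$ by comparing the limiting sliced energy with the right-hand side of \eqref{claa2aa} via the definition of $a^*$, identify a horizontal $\tanh$ profile at the unique zero near $0$ (Savin's theorem plus the vanishing of the $x_2$-derivative), and then integrate in $x_2$. Two steps, however, are asserted where the paper uses a different mechanism, and as stated they do not quite close. First, your classification of each slice into finitely many transitions $t_1<\dots<t_k$ followed by a parity/relocation argument is not justified: the restriction $v_n(\cdot,x_2)$ is not a minimizer of $E^{x_2}$, so the one-dimensional structure theory of \cite{panayotis_1} does not apply to it verbatim, and zeros could a priori accumulate at intermediate scales. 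The paper avoids this entirely: it only needs that a single zero at distance at least $\gamma$ from $x_1=0$ already forces $\liminf_n\mathcal{E}^{x_2}_{\epsilon'_n,a}(v_n(\cdot,x_2))$ to exceed strictly the limit in \eqref{claa2aa} (the quantitative lower bound of \cite[Proof of Theorem 1.1, Step 6]{panayotis_1}), which is a contradiction; no enumeration of transitions is required.

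Second, your derivation of \eqref{fattlem1aa} and \eqref{qaz2aa} rests on the slice-level claims that the energy over $|t|<\gamma$ converges exactly to $\frac{2\sqrt2}{3}(\mu(0,x_2))^{3/2}$ and that the energy over $\gamma<|t|<L$ tends to $0$ ``because there are no transitions there''. Neither follows from the profile analysis alone: local uniform convergence $v_n^2\to\mu$ away from the transition gives no control on $\int (v_n^2-\mu)^2/4\epsilon$, which requires $v_n^2-\mu=o(\sqrt{\epsilon})$ in $L^2$. The paper instead proves only the liminf inequality \eqref{detail} on $|t|<\gamma$, integrates it by Fatou to get \eqref{detail2}, and then squeezes against the global upper bound \eqref{quaq1} combined with \eqref{qaz1aa}; this yields \eqref{fattlem1aa} and \eqref{qaz2aa} simultaneously as integrated statements, without ever establishing slice-level convergence of the energy. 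You have all the ingredients for this squeeze (you proved \eqref{qaz1aa}, you have the lower bound near the transition, and Lemma \ref{bbnm} supplies the upper bound), so the gap is repairable, but the direct slice-level assertions should be replaced by it.
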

\begin{proof}
Given a sequence $\epsilon_n\to 0$, we are going to show that we can extract a subsequence $\epsilon'_n\to 0$ such that $\lim_{n\to\infty}\int_{\R^2}f_1v_{\epsilon'_n,a}=\int_{D(0;\rho)}|f_1|\sqrt{\mu}$, and
$$\lim_{n\to\infty}\int_{|x|<\rho, |x_1|<\gamma}\Big(\frac{\epsilon'_n}{2}\Big| \frac{\partial v_{\epsilon'_n,a}}{\partial x_1}\Big|^2+\frac{(v_{\epsilon'_n,a}^2-\mu)^2}{4\epsilon'_n}\Big)=\int_{-\rho}^\rho \frac{2\sqrt{2}}{3}(\mu(0,x_2))^{3/2}\dd x_2.$$ This will prove \eqref{qaz1aa} and \eqref{fattlem1aa}.
According to \eqref{fattlem1} and \eqref{fatnew2}, there exists a negligible set $N\subset (-\rho,\rho)$ such that for a subsequence called $\epsilon'_n$, and for every $x_2\in (-\rho,\rho)\setminus N$, we have
\begin{equation}\label{claa1aa}
\lim_{n\to \infty}\int_{\R}\epsilon'_n\Big| \frac{\partial v_n}{\partial x_2}(t,x_2)\Big|^2\dd t=0, 
 \end{equation}
and
\begin{equation}\label{claa2aa}
\lim_{n\to \infty}\mathcal{E}^{x_2}_{\epsilon'_n,a}(v_n(\cdot,x_2))= \frac{2\sqrt{2}}{3}(\mu(0,x_2))^{3/2}-a
\int_{|t|<\sqrt{\rho^2-x_2^2}}  |f_1(t,x_2)| \sqrt{\mu(t,x_2)}\dd t, 
\end{equation}
where we have set $v_n=v_{\epsilon'_n,a}$.
Our claim is that 
\begin{equation}\label{claiml1aa}
\lim_{n\to\infty}\int_{\R} f_1(t,x_2)v_n(t,x_2) \dd t=\int_{|t|<\sqrt{\rho^2-x_2^2}}  |f_1(t,x_2)| \sqrt{\mu(t,x_2)}\dd t, \ \forall x_2\in (-\rho,\rho)\setminus N.
\end{equation}
From \eqref{claa1aa} and \eqref{claa2aa}, it follows that given $x_2\in (-\rho,\rho)\setminus N$ and $\gamma\in(0,\rho)$, there exists $ \bar n(x_2,\gamma)$ such that 
\begin{equation}\label{zerovvaa}
n\geq\bar  n(x_2,\gamma),\ \gamma<|t|<\rho+1\Rightarrow v_n(t,x_2)\neq 0.
\end{equation}
Indeed, otherwise we can find a subsequence $n_k$ and a sequence $(-\rho-1,-\gamma)\cup(\gamma,\rho+1)\ni t_k\to t_0$ such that $v_{n_k}(t_k,x_2)=0$. Then, proceeding as in \cite[Proof of Theorem 1.1, Step 6]{panayotis_1} we obtain that $\liminf_{k\to \infty}\mathcal{E}^{x_2}_{\epsilon'_{n_k},a}(v_{n_k}(\cdot,x_2))> \frac{2\sqrt{2}}{3}(\mu(0,x_2))^{3/2}-a
\int_{|t|<\sqrt{\rho^2-x_2^2}}  |f_1(t,x_2)| \sqrt{\mu(t,x_2)}\dd t$, which contradicts \eqref{claa2aa}. Thus, \eqref{zerovvaa} holds, and actually in view of Theorem \ref{theorem 1} (iii) we have
\begin{equation}\label{zerovvaabis}
n\geq\bar  n(x_2,\gamma),\ \gamma<t<\rho+1\Rightarrow v_n(t,x_2)> 0, \text{ and }n\geq\bar  n(x_2,\gamma),\ -\rho-1<t<-\gamma \Rightarrow v_n(t,x_2)< 0.
\end{equation}
 Next, for fixed $t \in (-\sqrt{\rho^2-x_2^2},-\gamma)\cup(\gamma,\sqrt{\rho^2-x_2^2})$, we set $\tilde v_n(s):=v_n(t+\epsilon'_ns_1,x_2+\epsilon'_n s_2)$, and proceeding as in the proof of Lemma \ref{conclth2}, we can see that 
 $\lim_{n\to\infty}v_n(t,x_2)=\sqrt{\mu(t,x_2)}$ for $t\in (\gamma,\sqrt{\rho^2-x_2^2})$, while 
 $\lim_{n\to\infty}v_n(t,x_2)=-\sqrt{\mu(t,x_2)}$ for $t\in (-\sqrt{\rho^2-x_2^2},-\gamma)$.
Then, by repeating the arguments in the proof of Lemma \ref{conclth2}, our claim 
\eqref{claiml1aa} follows. Finally, we conclude that $\lim_{n\to \infty}\int_{|x_2|<\rho}f_1v_n=\int_{D(0;\rho)}|f_1|\sqrt{\mu}$ by dominated convergence, and since $\lim_{n\to \infty}\int_{|x_2|>\rho}f_1v_n=0$ by Theorem \ref{theorem 1} (iii), we have established \eqref{qaz1aa}. Another consequence of \eqref{zerovvaabis} is that for every $x_2\in (-\rho,\rho)\setminus N$, there exists a sequence $\bar t_n\to 0$ such that $v_n(\bar t_n,x_2)=0$. 
Setting $\tilde v_n(s):=v_n(\bar t_n+\epsilon'_ns_1,x_2+\epsilon'_n s_2)$, we obtain as in Lemma \ref{conclth2}, that 
$\tilde v_n$ converges in $C^2_{\mathrm{ loc}}(\R^2)$ to $\tilde V(s)=\sqrt{\mu(0,x_2)}\tanh(\sqrt{\mu(0,x_2)/2}(s\cdot \nu))$, for some unit vector $\nu =(\nu_1,\nu_2)\in \R^2$. Again, \eqref{claa1aa} implies that $\nu=(1,0)$, and we refer to the detailed computation in \cite[Proof of Theorem 1.1, Step 6]{panayotis_1} to see that 
\begin{equation}\label{detail}
\liminf_{n\to\infty}\int_{|t|<\min(\gamma,\sqrt{\rho^2-x_2^2})}\Big(\frac{\epsilon'_n}{2}\Big| \frac{\partial v_{n}}{\partial x_1}(t,x_2)\Big|^2+\frac{(v_{n}^2(t,x_2)-\mu(t,x_2))^2}{4\epsilon'_n}\Big)\dd t\geq \frac{2\sqrt{2}}{3}(\mu(0,x_2))^{3/2}.
\end{equation}
Then, it follows from Fatou's Lemma that
\begin{equation}\label{detail2}
\liminf_{n\to\infty}\int_{|x|<\rho, |x_1|<\gamma}\Big(\frac{\epsilon'_n}{2}\Big| \frac{\partial v_{\epsilon'_n,a}}{\partial x_1}\Big|^2+\frac{(v_{\epsilon'_n,a}^2-\mu)^2}{4\epsilon'_n}\Big)\geq\int_{-\rho}^\rho \frac{2\sqrt{2}}{3}(\mu(0,x_2))^{3/2}\dd x_2.
\end{equation}
Finally, combining \eqref{detail2} with \eqref{qaz1aa} and \eqref{quaq1}, we deduce \eqref{fattlem1aa} and \eqref{qaz2aa}.
\end{proof}
\begin{proof}[Conclusion of the proof of Theorem \ref{thcv} (iii)]
Proceeding as in the conclusion of the proof of Theorem \ref{thcv} (ii), we show that there are no limit points of the zeros of $v$ in the set $D(0;\rho)\cap\{(x_1,x_2): |x_1|> \gamma\}$, where $\gamma>0$ is small. 
As a consequence, given $\rho_0\in(0,\rho)$, there exists $\epsilon_0>0$ such that when $\epsilon\in (0,\epsilon_0)$, the minimizer $v_{\epsilon,a}$ is positive on $D(0;\rho_0)\cap\{(x_1,x_2): x_1> \gamma\}$. 
Let $K\subset (\gamma,\rho+1) \times(-\rho_0,\rho_0)$ be a compact set.
Our claim is that there exists $\epsilon_K>0$ such that when $\epsilon\in (0,\epsilon_K)$, the minimizer $v_{\epsilon,a}$ is positive on $K$. To prove this claim we assume by contradiction that there exist a sequence $\epsilon_n\to 0$, and a sequence 
$K\ni x_n\to x_0$ such that $v_n(x_n)\leq 0$, where we have set $v_n:=v_{\epsilon_n,a}$. Having a closer look at the proof of Lemma \ref{conclth2aa} (cf. in particular \eqref{zerovvaabis}), we can find $\rho_1\in(0,\rho_0)$ such that 
$K\subset (\gamma,\rho+1) \times(-\rho_1,\rho_1)$, and $\bar  n(\rho_1,\gamma)$ such that for $n\geq\bar  n(\rho_1,\gamma)$, and $t\in(\gamma,\rho+1)$, we have $v_n(t,\pm\rho_1)>0$. Next, in view of Theorem \ref{theorem 1} (iii), we also obtain that $v_n(\rho+1,s)>0$ for every $s\in [-\rho_1,\rho_1]$, provided that $n$ is large enough. Gathering these results, it follows that there exists $n_K$ such that for every $n\geq n_K$, $v_n$ is positive on the boundary of the rectangle $R:=  (\gamma,\rho+1)\times(-\rho_1,\rho_1)$. In addition, for $n\geq n_K$, $v_n$ cannot take negative values in $R$, since otherwise we would have $E(|v_n|,R)<E(v_n,R)$ in contradiction with the minimality of $v_n$. Thus, $v_n$ has a local minimum at $x_n$ for $n\geq n_K$, and \eqref{ode} implies that $0\leq \epsilon^2\Delta v_n(x_n)=-\epsilon a f_1(x_n)\in(-\infty,0)$, which is a contradiction.
This establishes our claim, and now in view of Theorem \ref{theorem 1} (iii) it is clear that $Z=\{x_1=0\}$. Finally, the limit in \eqref{cvn1} is established as in the conclusion of the proof of Theorem \ref{thcv} (ii). To prove the limit in \eqref{cvn00}, we proceed as in Lemma \ref{conclth2aa}. There exists a subsequence $\epsilon'_n\to 0$, and a negligible set $N\subset(-\rho,\rho)$, such that \eqref{claa1aa} and \eqref{claa2aa} hold for every $x_2\in(-\rho,\rho)\setminus N$.
Now, let $\bar x_{n}=(\bar t_{\epsilon'_n,a},x_2)$ be a zero of $v_{n}$ with fixed ordinate $x_2\in(-\rho,\rho)\setminus N$, and set $\tilde v_n(s):= v(\bar x_n+\ve'_n s)$.
Then $\tilde v_n$ converges in the $C^2_{\mathrm{loc}}(\R)$ sense to $\tilde V(s)=\sqrt{\mu(0,x_2)}\tanh(\sqrt{\mu(0,x_2)/2}(s\cdot \nu))$ for some unit vector $\nu\in\R^2$, and \eqref{claa1aa} implies that $\nu=(\pm 1,0)$, while \eqref{claa2aa} implies that
for $n$ large enough $v_n$ has a unique zero with fixed ordinate $x_2$. Thus, $\nu=(1,0)$ and \eqref{cvn00} is established.
\end{proof} 
\end{proof}


\providecommand{\bysame}{\leavevmode\hbox to3em{\hrulefill}\thinspace}
\providecommand{\MR}{\relax\ifhmode\unskip\space\fi MR }
\providecommand{\MRhref}[2]{%
  \href{http://www.ams.org/mathscinet-getitem?mr=#1}{#2}
}
\providecommand{\href}[2]{#2}

\end{document}